\numberwithin{equation}{section}
\newtheorem{theorem}{Theorem}[section]
\newtheorem{lemma}[theorem]{Lemma}
\newtheorem{notation}[theorem]{Notation}
\newtheorem{proposition}[theorem]{Proposition}
\newtheorem{corollary}[theorem]{Corollary}
\theoremstyle{definition}
\newtheorem{definition}[theorem]{Definition}
\theoremstyle{remark}
\newtheorem{remark}[theorem]{Remark}
\newtheorem{fact}[theorem]{Fact}
\newtheorem{example}[theorem]{Example}
\newtheorem{observation}[theorem]{Observation}
\newtheorem{discussion}[theorem]{Discussion}
\newtheorem{question}[theorem]{Question}
\newtheorem{conjecture}[theorem]{Conjecture}
\newtheorem{acknowledgement}{Acknowledgement}
\newcommand{\Ass}{\operatorname{Ass}}
\newcommand{\im}{\operatorname{im}}
\newcommand{\grade}{\operatorname{grade}}
\newcommand{\Spec}{\operatorname{Spec}}
\newcommand{\rad}{\operatorname{rad}}
\newcommand{\cd}{\operatorname{cd}}
\newcommand{\Ht}{\operatorname{ht}}
\newcommand{\pd}{\operatorname{p.dim}}
\newcommand{\engrad}{\operatorname{end}}
\newcommand{\Proj}{\operatorname{Proj}}
\newcommand{\Syz}{\operatorname{Syz}}
\newcommand{\rank}{\operatorname{rank}}
\newcommand{\Ext}{\operatorname{Ext}}
\newcommand{\Supp}{\operatorname{Supp}}
\newcommand{\Tor}{\operatorname{Tor}}
\newcommand{\Hom}{\operatorname{Hom}}
\newcommand{\Ann}{\operatorname{Ann}}
\newcommand{\depth}{\operatorname{depth}}
\newcommand{\coker}{\operatorname{coker}}
\newcommand{\vpl}{\operatornamewithlimits{\varprojlim}}
\newcommand{\fm}{\frak{m}}
\newcommand{\fp}{\frak{p}}
\newcommand{\fa}{\frak{a}}
\newcommand  {\shL}     {\mathcal{L}}
\newcommand{\NN}{\mathbb{N}}
\newcommand{\PP}{\mathbb{P}}
\begin{document}

\author[]{mohsen asgharzadeh }

\address{}
\email{mohsenasgharzadeh@gmail.com}

\title[ ]
{On the (LC) conjecture }

\subjclass[2010]{ 13A35;  13D40; 14H60.}
\keywords{Hilbert-Kunz theory;  (LC) conjecture; prime characteristic methods; vector bundles.
 }

\begin{abstract}
We prove the (LC) conjecture of Hochster and Huneke
in some non-trivial cases. This has several applications.
Recently, Brenner and Caminata answered a numerical
evidence due to  Dao and Smirnov on the shape  of  generalized
Hilbert-Kunz functions of smooth curves.  As  applications, we first
reprove this by a short argument. Then
we give a proof of second numerical  evidence predicted by  Dao and Smirnov on the shape  of  generalized
Hilbert-Kunz functions of nodal curves.
Thirdly,  we answer  a question posted by Vraciu on the (LC) property of  a  proposed ring.
Inspiring with the (LC) property, we present a connection to  the  stability theory. This leads  us to investigate the stability and the strong semistability of  the sheaf of relations on $\{x^2,y^2,z^2\}$ over the
Klein's quartic curve. This answers questions of Brenner. After presenting a connection from (LC) to the $F$-threshold, we answer a question posted by Huneke et al.
Additional applications and examples are given.
\end{abstract}

\maketitle

\smallskip

\section{ Introduction}

Throughout this paper $R:=\bigoplus_{n\geq0}R_n$ is a standard graded algebra over a field  $R_0$ of prime characteristic $p>0$, $\fm:= \bigoplus_{n>0}R_n$  is the irrelevant ideal and  $I\lhd R$ is a homogeneous ideal, otherwise specializes.   For  each $n\in \mathbb{N}$,  set $q:=p^n$ and denote the $n$-th Frobenius power by $I^{[q]}:=(x^q:x\in I)$. By $H^0_{\fm}(R/I^{[q]})$ we mean the elements of $R/I^{[q]}$ that annihilated by some  powers of $\fm$. This \textit{section} functor was introduced by Grothendieck. We call them \textit{Local Cohomology} modules.  By definition, there is $f(q)\in \mathbb{N}_0$ depending on $q$
such that  $\fm^{f(q)}H^0_{\fm}(R/I^{[q]})=0$. The  $(LC)$ conjecture claims that $f(q)$ is of linear type:

\begin{conjecture}There is  some $b\in \mathbb{N}_0$ that does not depending  to $q$ such that $\fm^{bq}H^0_{\fm}(R/I^{[q]})=0 \quad\forall q. $
\end{conjecture}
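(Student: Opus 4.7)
The plan is to reformulate the conjecture as a linear bound on the saturation degree of the Frobenius powers $I^{[q]}$, and then attack it geometrically via the sheaf of syzygies on $X = \Proj(R)$. Since $H^0_{\fm}(R/I^{[q]}) = (I^{[q]} :_R \fm^{\infty})/I^{[q]}$, the assertion $\fm^{bq}H^0_{\fm}(R/I^{[q]}) = 0$ is equivalent to the top degree of the graded module $H^0_{\fm}(R/I^{[q]})$ being at most $bq + c$ for fixed constants. Two extremal cases should be dispatched first: if $\grade_R I = 0$ then $H^0_{\fm}(R/I^{[q]})$ is controlled by a single fixed $\fm$-power killing the $\fm$-torsion in $R$; and if $\grade_R I \geq \dim R$ then a standard depth argument shows the module vanishes in high degree, making the conclusion automatic. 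The essential case is therefore $0 < \grade I < \dim R$.

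I would next reduce to dimension one by slicing with a sufficiently general linear form and applying a Bertini-type transversality statement in positive characteristic, so that the question becomes one about a standard graded algebra of dimension one and a homogeneous ideal $I = (f_1,\dots,f_t)$ of positive height. On the projective curve $X = \Proj(R)$ the syzygy bundle sits in
\[
0 \lo \Syz(f_1,\dots,f_t) \lo \bigoplus_{i=1}^t \mathcal{O}_X(-d_i) \lo \mathcal{O}_X \lo 0,
\]
and its $n$-th Frobenius pullback presents $\mathcal{O}_X/I^{[q]}\mathcal{O}_X$. Taking sheaf cohomology and twisting, one identifies the degree $m$ part of $H^0_{\fm}(R/I^{[q]})$ in large enough $m$ with a subquotient of $H^1\bigl(X, F^{n\ast}\Syz(f_1,\dots,f_t)(m)\bigr)$, so the conjecture reduces to a linear-in-$q$ vanishing statement for the $H^1$ of Frobenius pullbacks of the syzygy bundle.

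The cohomological vanishing $H^1\bigl(X, F^{n\ast}\Syz(\underline{f})(m)\bigr) = 0$ for $m \geq bq$ is precisely where the stability theory mentioned in the abstract enters. If $\Syz(\underline{f})$ is strongly semistable, then each $F^{n\ast}\Syz(\underline{f})$ is semistable of slope $q\cdot\mu$, and the vanishing holds for $m$ above an explicit threshold linear in $q$ determined by $\mu$ and $2g-2$. When strong semistability fails, one should work with the Harder--Narasimhan filtration of $F^{n\ast}\Syz(\underline{f})$ and invoke Langer's theorem to the effect that $\mu_{\max}(F^{n\ast}\Syz)$ grows linearly in $q$ once the instability has stabilised in the Frobenius tower; each graded piece of the HN filtration then contributes a linear-in-$q$ vanishing range.

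The principal obstacle will be twofold. First, the Bertini reduction in positive characteristic is subtle and does not automatically preserve the Frobenius-theoretic hypotheses; one may have to replace it by a direct argument with a carefully chosen system of parameters and a change-of-rings spectral sequence relating $H^0_{\fm}(R/I^{[q]})$ to local cohomology on a general slice. Second, Langer's linear bound on the growth of $\mu_{\max}$ is effective only after one climbs high enough in the Frobenius tower, so extracting a single constant $b$ valid for all $q$ requires an effective version of that estimate together with hands-on control of the bundles arising in concrete families, such as smooth curves, nodal curves, or the Klein quartic singled out in the abstract. I expect the paper to resolve precisely those cases in which one or both of these issues can be made fully explicit, leaving the general conjecture reduced to a clean instability problem for syzygy bundles.
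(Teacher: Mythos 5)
There is a genuine gap, and it starts with the status of the statement itself: this is the (LC) \emph{conjecture}, which the paper does not prove in general --- it only establishes special cases (Theorem \ref{lq}: normal Cohen--Macaulay of dimension $<4$, normal with $\Proj(R/I)^2$ regular, or $\pd(R/I)<\infty$, plus the finite $F$-representation type case in Discussion \ref{ffrt}). Your proposal does not close the conjecture either, and its central reduction is not valid. Slicing by a general linear form does not reduce the problem to dimension one: the modules $H^0_{\fm}(R/I^{[q]})$ are saturations of a $q$-varying family of ideals, local cohomology does not commute with passing to a hyperplane section, and no single ``general'' slice is transverse to $\Ass(R/I^{[q]})$ for all $q$ simultaneously (the paper has to work to get even finiteness of $\bigcup_n\Ass$, and only in dimension one, Lemma \ref{avoid}). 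If such a Bertini reduction worked, the full conjecture would follow at once from the known one-dimensional case of Huneke and Vraciu (Lemma \ref{lc}), which is precisely what does not happen. Your second step has the same problem in a different guise: on a smooth projective curve the linear-in-$q$ vanishing of $H^1\bigl(X,F^{n\ast}\Syz(\underline{f})(m)\bigr)$ with a uniform constant is exactly Brenner's linear bound for Frobenius powers, which the paper quotes and uses (Remark \ref{holgercomp}); Langer's theorem gives rationality of $\overline{\mu}_{\min}$ but by itself only asymptotic, not uniform-in-$q$, control, and none of this machinery extends off the two-dimensional normal graded (smooth curve) setting where the syzygy sheaf is locally free and $H^0_{\fm}(R/I^{[q]})\simeq H^1_{\fm}(I^{[q]})$ via $\mathcal{S}(2)$ (Lemma \ref{s2}).

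So at best your route re-derives the already known two-dimensional cases and the curve-level bounds of Section 11; it does not touch the cases the paper actually adds. The paper's proof of Theorem \ref{lq} is homological rather than geometric: apply the Peskine--Szpiro functor $F^n(-)$ to a graded free resolution of $R/I$, invoke the Kustin--Polini--Ulrich degree bound (Lemma \ref{kus}) to get $\engrad$ of the relevant local cohomology bounded by $aq+c$, control $\dim\Tor_j^R(F^n(R),R/I)$ using flatness of Frobenius on the regular locus guaranteed by Serre's criterion or by the regularity hypothesis on $\Proj(R/I)^2$ (Lemmas \ref{dim} and \ref{tor}), descend through the resolution, and finally convert the linear end-degree bound into the (LC) statement by Lemma \ref{mainlemma}. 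If you want to pursue your stability approach, the honest target is the curve case and the applications of Sections 6--7, not the conjecture itself.
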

Hochster and Huneke  \cite{HH} introduced the (LC) conjecture in relation to the \textit{localization problem}.
In  \cite{t}, Brenner and Monsky
found a counterexample to the localization problem. In this paper
we investigate the (LC) property in some non-trivial cases and present some of its applications.

The unsolved question in this area is  whether \textit{weakly F-regular} rings are
\textit{F-regular}. By a result of Huneke, the (LC) property in  dimension one answers this question affirmatively.
We now explain how the (LC) condition  arises from Hilbert-Kunz theory.
By $f_{gHK}^{R/I}(n)$ we mean the length of $H^0_{\fm}\left(R/I^{[q]}\right)$ as an $R$-module and by $e_{gHK}(R/I)$ we mean $\lim_{n\to\infty}\frac{f_{gHK}^{R/I}(n)}{p^{n\dim R}},$
if the limit exists  and put $\infty$ otherwise.
 Following Dao and Smirnov \cite{dao}, we call them the \textit{generalized Hilbert-Kunz function} and the generalized Hilbert-Kunz multiplicity, respectively. They proved  such a limit exists under some conditions. Very recently, Vraciu  \cite{V1} observed that such a limit exists
for a subclass of rings that satisfy in the (LC) property.
Our motivation comes from a paper of Brenner
 on the \textit{irrational} possibility of Hilbert-Kunz multiplicity. He proved this by observing that $$e_{gHK}(R/(a,b))\notin\mathbb{Q} \quad(\star),$$
where $R$ is the coordinate ring of a \textit{$K3$-surface} in $\PP^3$, see \cite{Holger3}.  Hilbert-Kunz multiplicity  introduced by  Kunz \cite{kunz}, and  Monsky   proved that such a limit exists \cite{mon}.

 The (LC) conjecture is true in 2-dimensional normal rings by  Vraciu \cite{V2}.
In the graded  rings and in the case $\dim R/I=1$ the (LC)  checked by Vraciu \cite{V2} and Huneke \cite{Hun}. Also, we cite the work of Katzman as another related source \cite{kat2}.

The organization of this paper is as follows. In Section 2 we summarize  some
known results  that we need.  Section 3 introduces the  (LC) property with respect to a family of ideals. The focus here is limited
to the Frobenius powers. However, we catch a glimpse to certain families of ideals.
For any $X\subset \Spec(R),$ set
$X^i:=\{\fp\in X\mid\dim R/ \fp\geq i\}.$
 Section 4 is devoted to show:

\begin{theorem}\label{lq} Let $R$ be a standard graded Cohen-Macaulay  over a field  $R_0$ of prime characteristic $p>0$ and let  $I\lhd R$ be homogeneous. Suppose one of the following holds:
  \begin{enumerate}
 \item[(i)] The ring is normal and $\dim R<4$.
\item[(ii)]  The ring is normal and $\Proj( \frac{R}{I})^2$ is regular.
\item[(iii)] $\pd(R/I)<\infty$ .
\end{enumerate}
Then $\fm^{bq}H^0_{\fm}(R/I^{[q]})=0$ for some $b$ that does not depending  to $q$.
\end{theorem}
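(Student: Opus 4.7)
My plan is to treat the three cases with distinct techniques, organized by the observation that $H^0_\fm(R/I^{[q]})$ is a finite-length graded $R$-module (it is the $\fm$-torsion submodule of a finitely generated module), so the conclusion $\fm^{bq}H^0_\fm(R/I^{[q]})=0$ is equivalent to a bound $\sup\{n : H^0_\fm(R/I^{[q]})_n \neq 0\} \leq bq + c$ on the top graded degree, with $b,c$ independent of $q$. The aim in each case is such a linear-in-$q$ bound.

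For case (iii), the idea is to invoke Peskine--Szpiro: since $\pd_R(R/I) < \infty$, the minimal graded free resolution $F_\bullet \to R/I$ stays acyclic under the Frobenius functor, so $F_\bullet^{[q]} \to R/I^{[q]}$ is a free resolution of the same length. By Auslander--Buchsbaum together with the Cohen--Macaulay hypothesis on $R$, $\depth(R/I^{[q]}) = \dim R - \pd(R/I)$ is constant in $q$. If this depth is positive then $H^0_\fm(R/I^{[q]}) = 0$ and $b=0$ works. Otherwise, graded local duality identifies $H^0_\fm(R/I^{[q]})$ with the graded Matlis dual of $\Ext^{\dim R}_R(R/I^{[q]}, \omega_R)$, which is the cokernel of $\Hom(F_{d-1}^{[q]}, \omega_R) \to \Hom(F_d^{[q]}, \omega_R)$. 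Since the generator degrees of $F_d^{[q]}$ are exactly $q$ times those of $F_d$, the graded shifts of this cokernel grow linearly in $q$, yielding the uniform $b$.

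For case (ii), the hypothesis gives an open locus $U \subseteq \Proj R$ on which $(R/I)_\fp$ is regular and whose complement has codimension $\geq 2$. On $U$, Frobenius is flat and $\widetilde{R/I^{[q]}}|_U$ is controlled by the Frobenius iterates of the syzygy sheaf of $I$. Since $H^0_\fm(R/I^{[q]})$ is the kernel of the natural map $R/I^{[q]} \to \Gamma_*(\widetilde{R/I^{[q]}})$, a linear-in-$q$ bound on $\reg(\Gamma_*(\widetilde{R/I^{[q]}}))$ suffices; I would extract this from a Koszul-type complex on generators of $I$ combined with Serre vanishing on $\Proj R$. For case (i), the only genuinely new situation is $\dim R = 3$: if $\dim R/I = 3$ then $I = 0$ (the normal $R$ is a domain), trivial; if $\dim R/I = 2$ then $\Ht I = 1$, so Serre's $(R_1)$ condition on the normal $R$ makes the hypothesis of (ii) automatic, reducing to case (ii); $\dim R/I = 1$ is Huneke's theorem \cite{Hun}; and $\dim R/I = 0$ follows from the elementary pigeonhole bound $\fm^c \subseteq I \Rightarrow \fm^{(c+N)q} \subseteq I^{[q]}$, where $N$ is the number of degree-one algebra generators of $R$ over $R_0$.

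The main obstacle is the $q$-linear regularity bound in case (ii): the sheaf-theoretic framework is natural, but obtaining a uniform estimate requires controlling how Frobenius pullbacks of the syzygy sheaf of $I$ interact with higher cohomology on the ambient $\Proj R$, which itself may have singularities, and the bound must be insensitive to the degrees of the generators of $I$. Case (iii) should be essentially automatic once Peskine--Szpiro and graded local duality are in hand, and case (i) reduces cleanly to (ii) together with the small-dimensional results of \cite{V2} and \cite{Hun}.
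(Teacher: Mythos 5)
Your case (iii) is fine: graded local duality applied to the Frobenius pullback of the finite free resolution (acyclic by Peskine--Szpiro) does give $\indeg\bigl(\Ext^{\dim R}_R(R/I^{[q]},\omega_R)\bigr)\geq -q\max_i\beta_{di}+O(1)$, hence a top-degree bound $\leq aq+c$ on $H^0_{\fm}(R/I^{[q]})$, which is exactly what the paper extracts instead from the Kustin--Polini--Ulrich degree bound. The problems are in (i) and (ii). In (i), your reduction of the subcase $\dim R=3$, $\Ht I=1$ to case (ii) is wrong: Serre's $(R_1)$ makes $R_{\fp}$ regular for height-one $\fp$, but the hypothesis of (ii) is regularity of $(R/I)_{\fp}=R_{\fp}/IR_{\fp}$, which is a quotient of a DVR and is regular only when $IR_{\fp}=\fp R_{\fp}$; already for $I=\fp^{2}$ (or any $I$ not generically radical along a height-one minimal prime) the hypothesis of (ii) fails while (i) still applies. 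The paper avoids this by never asking anything of $R/I$ in case (i): normality of $R$ alone gives flatness of Frobenius at primes of height $\leq 1$, hence $\dim\Tor^R_k(F^n(R),R/I)\leq\dim R-2$ for $k\geq 1$, and then the degree bound of \cite[Lemma 3.2]{kus} is applied with $i=0$ directly to the Frobenius pullback of a graded free resolution of $R/I$, yielding $\engrad\bigl(H^0_{\fm}(R/I^{[q]})\bigr)\leq aq+c$.

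In (ii) the heart of the theorem is missing, and the strategy you sketch would not deliver it. First, a bound on $\reg\bigl(\Gamma_*(\widetilde{R/I^{[q]}})\bigr)$ cannot control $H^0_{\fm}(R/I^{[q]})$: the saturation map $R/I^{[q]}\to\Gamma_*(\widetilde{R/I^{[q]}})$ has kernel exactly $H^0_{\fm}(R/I^{[q]})$, so the target carries no information about the degrees in which that kernel lives; what you would need is a linear-in-$q$ bound on the $H^0$-strand of the regularity of $R/I^{[q]}$ itself, which is precisely the (LC) statement being proved. Second, ``Koszul-type complex plus Serre vanishing'' gives vanishing for twists $m\gg 0$ with no uniformity in $q$; the whole difficulty is to make the threshold linear in $q$. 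The paper's route is different and is where the hypothesis on $\Proj(R/I)^2$ actually enters: apply the Kustin--Polini--Ulrich bound to the truncation of $F^n$ of the resolution to get $\engrad\bigl(H^{d-3}_{\fm}(H_0(C_{\bullet}))\bigr)\leq aq+c$; use regularity of $\Proj(R/I)^2$ (via flatness of Frobenius at those primes) to force $\dim\Tor^R_j(F^n(R),R/I)\leq 1$ and to kill the kernel of the comparison map $F^n(I)\to I^{[q]}$ outside a locus of dimension $<2$; then run a descending induction along the resolution, using Grothendieck vanishing on these small Tor modules, to push the bound down to $\engrad\bigl(H^1_{\fm}(I^{[q]})\bigr)\leq aq+c$; finally use $\mathcal{S}(2)$ to identify $H^0_{\fm}(R/I^{[q]})\simeq H^1_{\fm}(I^{[q]})$. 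Without an argument of this kind (or some substitute producing the $q$-linear bound), cases (i) (in the subcase above) and (ii) remain unproved.
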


This motivates to check the (LC) property over rings with \textit{quotient singularity}, see Remark \ref{qu}.
One may recover this by  Discussion \ref{ffrt}:
The (LC) property holds for  rings of \textit{finite $F$-representation type}.
 In Section 5 we apply Theorem \ref{lq} to deduce the following which was asked by Vraciu \cite[Page 3]{V1}:

\begin{corollary}
The $R$-module  $R/(a,b)$ in $(\star)$ satisfies in the (LC)
condition.
\end{corollary}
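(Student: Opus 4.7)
The plan is to verify that the ring $R$ in $(\star)$ satisfies the hypotheses of Theorem \ref{lq}(i), so that the desired (LC) property follows immediately. I first recall the geometric setup: a $K3$-surface in $\PP^3$ is (for our purposes) a smooth quartic hypersurface, so $R$ is presented as $k[x_0,x_1,x_2,x_3]/(f)$ with $f$ a homogeneous form of degree $4$ cutting out a smooth projective surface. In particular $R$ is a standard graded algebra over a field of prime characteristic, and the ideal $(a,b)$ in $(\star)$ is homogeneous.

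Next I would check the three needed ring-theoretic properties. Since $R$ is a hypersurface it is Cohen-Macaulay (even Gorenstein). Since the projective surface $\Proj(R)$ is smooth, the only possible singular point of $\Spec(R)$ is the irrelevant vertex, so the singular locus has codimension $3$ in $\Spec(R)$; in particular $R$ satisfies Serre's condition $R_1$. Combined with $S_2$ coming from Cohen-Macaulayness, Serre's criterion yields that $R$ is normal. Finally, as $R$ is the affine cone over a surface, $\dim R = 3 < 4$.

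With these verifications in place, all the hypotheses of Theorem \ref{lq}(i) are met for the pair $(R,(a,b))$. Invoking that theorem, there exists $b\in\mathbb{N}_0$, independent of $q$, with $\fm^{bq}H^0_{\fm}(R/(a,b)^{[q]})=0$ for all $q=p^n$, which is exactly the (LC) condition for $R/(a,b)$.

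The only point requiring any care is the normality of $R$; once one notes that smoothness of the projective $K3$ forces the singular locus of the cone to be supported at $\fm$, Serre's criterion finishes the job and the corollary reduces to a direct application of Theorem \ref{lq}. There is no real obstacle here beyond checking that the example $(\star)$ fits the framework of the theorem.
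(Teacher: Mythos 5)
Your proposal is correct and follows the same route as the paper: the paper's proof likewise just notes that $R$ is normal, Cohen-Macaulay, of dimension $3$, with $(a,b)$ homogeneous, and then cites Theorem \ref{lq}(i). Your extra verification of normality via smoothness of the $K3$ (Serre's criterion, $R_1$ plus $S_2$ from the hypersurface property) simply spells out what the paper asserts without proof.
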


This implies that
$f^{R/I}_{gHK}$ is a
linear combination with integer coefficients  of
Hilbert-Kunz functions of $\fm$-primary ideals. By the help of this and $(\star)$ Vraciu obtains a more direct  $\fm$-primary ideals with irrational Hilbert-Kunz multiplicities.
In Section 6, we first reprove the main result of \cite{AB} (at the level of ideals) by a short argument. This was asked in \cite{dao} and regards as an  application of the (LC)
condition:

\begin{corollary}\label{a2}
Over $2$-dimensional normal graded domains over  a field  $F$ of prime characteristic, one has:
\begin{enumerate}
\item[i)]
$f_{gHK}^{R/I}(n) = e_{gHK}(R/I)q^2 + \gamma(q)$,
where $e_{gHK}(R/I)\in \mathbb{Q}$ and $\gamma(q)$ is a bounded function,
\item[ii)]if $F=\overline{\mathbb{F}}_p$, then $\gamma(q)$ is an eventually
\textit{periodic} function.
\end{enumerate}
\end{corollary}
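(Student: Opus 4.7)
The plan is to use Theorem \ref{lq}(i) --- which applies because $R$ is 2-dimensional and normal --- to convert the generalized Hilbert-Kunz length into a $\mathbb{Z}$-linear combination of ordinary Hilbert-Kunz lengths of fixed $\fm$-primary ideals, and then invoke Brenner's second-term asymptotics on normal graded surfaces (with Monsky/Brenner periodicity over $\overline{\mathbb{F}}_p$).

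\textbf{Step 1 (Apply (LC)).} Theorem \ref{lq}(i) produces $b\in\mathbb{N}_0$, independent of $q$, with $\fm^{bq}H^0_{\fm}(R/I^{[q]})=0$. Equivalently, $(I^{[q]}:\fm^{\infty})=(I^{[q]}:\fm^{bq})$, so $H^0_{\fm}(R/I^{[q]})\cong (I^{[q]}:\fm^{bq})/I^{[q]}$ is a finite length $R/\fm^{bq}$-module.

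\textbf{Step 2 (Reduce to $\fm$-primary Hilbert-Kunz).} Apply $-\otimes_R R/\fm^{bq}$ to
\[ 0\to H^0_{\fm}(R/I^{[q]})\to R/I^{[q]}\to R/(I^{[q]}:\fm^{bq})\to 0. \]
Because the left term is already killed by $\fm^{bq}$, a standard diagram chase (in the style of Vraciu \cite{V1}) yields
\[ f^{R/I}_{gHK}(n)=\mathrm{length}\bigl(R/(I^{[q]}+\fm^{bq})\bigr)-\mathrm{length}\bigl(R/((I^{[q]}:\fm^{bq})+\fm^{bq})\bigr)+\varepsilon(q), \]
where $\varepsilon(q)$ comes from $\Tor_1^R(R/(I^{[q]}:\fm^{bq}),R/\fm^{bq})$ and is bounded uniformly in $q$ thanks to the fixed exponent $b$.

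\textbf{Step 3 (Trade $\fm^{bq}$ for Frobenius powers).} Since $R$ is standard graded, the length discrepancy between $\fm^{bq}$ and $(\fm^b)^{[q]}$ contributes at most a bounded correction to each summand in Step 2 (once again because $b$ does not grow with $q$). This rewrites
\[ f^{R/I}_{gHK}(n)=\mathrm{length}\bigl(R/(I+\fm^{b})^{[q]}\bigr)-\mathrm{length}\bigl(R/J_{q}\bigr)+\delta(q), \]
with $I+\fm^{b}$ a fixed $\fm$-primary ideal, $J_{q}$ a Frobenius power of a fixed $\fm$-primary ideal, and $\delta$ bounded.

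\textbf{Step 4 (Known surface asymptotics).} By Brenner's theorem on Hilbert-Kunz functions in $2$-dimensional normal graded domains, each $\mathrm{length}(R/K^{[q]})$ for an $\fm$-primary $K$ has the shape $e_{HK}(K)q^{2}+\beta_{K}(q)$ with $e_{HK}(K)\in\mathbb{Q}$ and $\beta_K$ bounded; over $\overline{\mathbb{F}}_{p}$ the function $\beta_{K}$ is eventually periodic. Substituting into Step 3 delivers
\[ f^{R/I}_{gHK}(n)=e_{gHK}(R/I)\,q^{2}+\gamma(q), \]
where $e_{gHK}(R/I)$ is a $\mathbb{Z}$-combination of the rational numbers $e_{HK}(\cdot)$, hence rational, and $\gamma(q)$ is bounded --- and eventually periodic once $F=\overline{\mathbb{F}}_{p}$, since finite $\mathbb{Z}$-sums of eventually periodic functions are eventually periodic.

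\textbf{Main obstacle.} The nontrivial bookkeeping lies in Steps 2 and 3: one must check that $\varepsilon(q)$ and $\delta(q)$ are genuinely bounded. The decisive point is that the single exponent $b$ produced by the (LC) conjecture works for every $q$, so all Tor corrections and all ``ordinary-versus-Frobenius power'' discrepancies are dominated by lengths of a \emph{fixed} $\fm$-primary ideal. Without Theorem \ref{lq}(i) one would face a growing exponent $b(q)$, and the error terms would fail to stay bounded, obstructing the $O(1)$ second term.
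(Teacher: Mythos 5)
There is a genuine gap, and it sits exactly where you flag the ``nontrivial bookkeeping'': the boundedness claims for $\varepsilon(q)$ in Step 2 and $\delta(q)$ in Step 3 are false, and the fixed exponent $b$ does not rescue them. The Tor-correction in Step 2 is $\Tor_1^R\bigl(R/(I^{[q]}:\fm^{bq}),R/\fm^{bq}\bigr)$; being killed by $\fm^{bq}$ only makes it a module over $R/\fm^{bq}$, whose length can grow like $q^2$. Concretely, take the paper's own Example \ref{none}: $R=\overline{\mathbb{F}}_p[X,Y]$, $I=(XY,X^n)$, so $(I^{[q]}:\fm^{\infty})=(X^q)$ and
$\Tor_1^R\bigl(R/(X^q),R/\fm^{bq}\bigr)\cong (\fm^{bq}:X^q)/\fm^{bq}=\fm^{(b-1)q}/\fm^{bq}$,
whose length is $\tfrac{(2b-1)q^2+q}{2}$, i.e.\ of order $q^2$, not $O(1)$; and indeed the two main terms in your Step 2 formula compute to $(n-\tfrac32)q^2+O(q)$ while $f^{R/I}_{gHK}(q)=(n-1)q^2$, so the discarded error carries a nonzero $q^2$-coefficient. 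Step 3 has the same defect: $(I+\fm^b)^{[q]}=I^{[q]}+(\fm^b)^{[q]}\subsetneq I^{[q]}+\fm^{bq}$ in general, and already in $k[X,Y]$ one has $\ell(R/\fm^{bq})=\tfrac{bq(bq+1)}{2}$ versus $\ell(R/(\fm^b)^{[q]})=q^2\,\tfrac{b(b+1)}{2}$, a discrepancy of $\tfrac{b}{2}q^2+O(q)$. So the reduction to $\fm$-primary Hilbert--Kunz functions, as you set it up, distorts the leading term rather than merely the $O(1)$ term, and Step 4 then cannot deliver the correct $e_{gHK}(R/I)$.

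The correct mechanism (and the one the paper uses) is not a single difference with error terms but Vraciu's exact two-term identity. After disposing of the trivial cases $\dim R/I\in\{0,2\}$, one uses the finiteness of $\bigcup_n\Ass(R/I^{[q]})$ (Lemma \ref{avoid}) together with the (LC) exponent $b$ (Lemma \ref{lc}(1), or Theorem \ref{lq}(i) as you propose) to choose a single homogeneous $s\in\fm^{b}$ outside every associated prime of every $R/I^{[q]}$ except $\fm$ (Discussion \ref{dislin}). Writing $M=R/I^{[q]}$, $H=H^0_{\fm}(M)$, $N=M/H$, one has $s^qH=0$ and $s^q$ a nonzerodivisor on $N$, whence $\ell(M/s^qM)=\ell(H)+\ell(N/s^qN)$ and $\ell(M/s^{2q}M)=\ell(H)+2\ell(N/s^qN)$; subtracting gives the exact identity
$f^{R/I}_{gHK}(q)=2\,f^{R/(I+(s))}_{HK}(q)-f^{R/(I+(s^2))}_{HK}(q)$
with no error term --- the doubled term is precisely what cancels the contribution of the torsion-free quotient, which your single-difference formula cannot do. Only then does one invoke Brenner's result (Lemma \ref{holger}) for the two fixed $\fm$-primary ideals $I+(s)$ and $I+(s^2)$, and your Step 4 closing argument (rationality and eventual periodicity pass through $\mathbb{Z}$-linear combinations) goes through verbatim.
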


In order to present the next application, we look at the non-normal ring  $R := \frac{k[[x, y, t]]}{(x^3 + txy + y^3)}$ and set $M := R/(x, y)$. For e.g. $k = \mathbb{F}_{11}$ recall from \cite[Example 6.3(3)]{dao} the following numerical evidence:
"$f^{M}_{gHK}(q)=\frac{q^2+2q-\gamma(q)}{3}$ and the formula  seems to depend on whether $q = 1 \mod 3$."
 We apply  the (LC) condition to prove this numerical evidence:

\begin{corollary}\label{node} Suppose $R$ is the coordinate ring of  a \textit{nodal} cubic plane projective curve  over an algebraically  closed field of prime characteristic $p$ and $I\lhd R$ is graded. Then
$f_{gHK}^{R/I}( q) = \mu q^2 +aq- r,$ where $r$  is an integer that depends on  $q \mod 3$.
\end{corollary}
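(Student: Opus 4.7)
My plan is to pass to the normalization $\wtd{R}$ of $R$, apply Corollary \ref{a2} there, and correct by the contribution of the node. Since $C := \Proj(R)$ is a nodal cubic over an algebraically closed field, the normalization $\pi \colon \PP^1 \to C$ is birational and exhibits $\wtd{R} = \bigoplus_n H^0(\PP^1, \pi^*\mathcal{O}_C(n))$ as a module-finite integral extension of $R$, with cokernel $C_0 := \wtd{R}/R$ of finite length supported at the homogeneous ideal of the node. The ring $\wtd{R}$ is two-dimensional, normal, and graded, so $I\wtd{R}$ satisfies the hypotheses of Corollary \ref{a2}, producing
$$f_{gHK}^{\wtd{R}/I\wtd{R}}(q) = \wtd{\mu}\, q^2 + \wtd{\gamma}(q),$$
with $\wtd{\gamma}$ bounded and (by the algebraic closedness of the base) eventually periodic.

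I would then compare $H^0_{\fm}(R/I^{[q]})$ to $H^0_{\fm}(\wtd{R}/I^{[q]}\wtd{R})$ via the normalization sequence $0 \to R \to \wtd{R} \to C_0 \to 0$. Because $C_0$ has finite length and is killed by a fixed power of $\fm$, one has $I^{[q]}C_0 = 0$ for all $q \gg 0$. Tensoring with $R/I^{[q]}$ and taking local cohomology lengths yields
$$f_{gHK}^{R/I}(q) = f_{gHK}^{\wtd{R}/I\wtd{R}}(q) + \delta(q),$$
where $\delta(q)$ is a combination of $\ell(\Tor_1^R(C_0, R/I^{[q]}))$ and $\ell(C_0 \otimes_R R/I^{[q]})$. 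Since locally at the node the ring is analytically isomorphic to $k[[u,v]]/(uv)$, the Tor term can be analyzed with the two-step resolution of $C_0$ arising from the two branches, and this analysis produces a decomposition $\delta(q) = a q + \rho(q)$ with $a \in \BZ$ and $\rho$ periodic.

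The residue $q \bmod 3$ dependence in $\rho$ reflects the degree of the cubic: under $\pi$, the restriction $\mathcal{O}_{\PP^2}(1)|_C$ pulls back to $\mathcal{O}_{\PP^1}(3)$, so Frobenius-twisted conductor contributions decompose into $\BZ/3$-graded pieces whose aggregates vary only with $q \bmod 3$. The main obstacle is the precise bookkeeping of $\delta(q)$, namely verifying that its non-periodic part is exactly linear with integer coefficient $a$ (rather than a more general $O(q)$ quantity) and that the residual periodicity is indeed modulo $3$ and no finer. Once this is done, absorbing $\wtd{\gamma}(q)$ from Corollary \ref{a2} into $\rho$ produces the single periodic correction $-r$ in the claimed formula $\mu q^2 + a q - r$, completing the argument.
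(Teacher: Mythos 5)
Your route does not work as written, and the gaps sit exactly where the content of the statement lies. The first problem is the premise that $C_0=\wtd{R}/R$ has finite length: for the affine cone over a nodal cubic the non-normal locus is the whole line over the node, i.e.\ the conductor is a height-one homogeneous ideal, so $C_0$ is a one-dimensional $R$-module, is not killed by any power of $\fm$, and $I^{[q]}C_0\neq 0$ in general. Consequently the step ``$I^{[q]}C_0=0$ for $q\gg 0$'' and the ensuing additivity $f_{gHK}^{R/I}(q)=f_{gHK}^{\wtd{R}/I\wtd{R}}(q)+\delta(q)$, with $\delta$ built from $\ell(C_0\otimes_R R/I^{[q]})$ and $\ell(\Tor_1^R(C_0,R/I^{[q]}))$, collapse; one would instead have to control $H^0_{\fm}$ and $H^1_{\fm}$ of one-dimensional Tor modules as $q$ varies, which is precisely the bookkeeping you defer. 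The second problem is that Corollary \ref{a2} over an arbitrary algebraically closed field only yields a \emph{bounded} error term $\wtd{\gamma}(q)$; the eventual periodicity in that corollary requires $F=\overline{\mathbb{F}}_p$, so ``by the algebraic closedness of the base'' does not give you a periodic term, let alone one governed by $q\bmod 3$. Finally, the step you yourself flag as the main obstacle --- that $\delta(q)=aq+\rho(q)$ with $\rho$ depending only on $q\bmod 3$ and nothing finer --- is exactly the assertion to be proved, and it is not carried out.

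The paper avoids all of this by staying on the nodal cubic itself rather than passing to $\PP^1$. One may assume $\Ht(I)=1$; then the (LC) property holds by Lemma \ref{lc}(2) (graded, $\Ht(I)=\dim R-1$), the set $\bigcup_n\Ass(R/I^{[q]})$ is finite by Lemma \ref{avoid}, and Lemma \ref{linear} (Vraciu's trick) produces a homogeneous $s$ with $f^{R/I}_{gHK}=2f^{R/(I+(s))}_{HK}-f^{R/(I+(s^2))}_{HK}$. The ideals $I+(s)$ and $I+(s^2)$ are $\fm$-primary, so Monsky's theorem for nodal cubics (Lemma \ref{mon}) applies to each and already carries the exact shape $\mu q^2+aq-r$ with $r$ depending on $q\bmod 3$; combining the two expansions gives the claim. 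If you wanted to salvage the normalization idea, you would in effect be re-deriving Monsky's sheaf-theoretic result on the nodal cubic, and you would still need the (LC)/Vraciu reduction (or an equally precise substitute) to pass from $\fm$-primary ideals to arbitrary graded $I$.
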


The next subject of  Section  6 is Theorem \ref{irr}: As an application of $(LC)$, we quickly derive $e_{gHK}(-)\in \mathbb{Q}$  for the coordinate ring of singular plane projective curves. In the sequel we need the following quantities:
\begin{definition}\label{num}For an ideal $I$  of a ring $R$, set:
\begin{enumerate}
\item[i)]$b(I):=\inf\{b\in \mathbb{N}_0:\fm^{bq}H^0_{\fm}(R/I^{[q]})=0 \quad \forall q\},$
\item[ii)]$c(I):=\inf\{b\in \mathbb{N}_0:\fm^{bq}H^0_{\fm}(R/I^{[q]})=0 \quad \forall q\gg 0\}$,
\item[iii)] $d(I):=\inf\{b\in \mathbb{N}_0:\fm^{bn}H^0_{\fm}(R/I^n)=0 \quad \forall n\gg 0\}$.
\end{enumerate}
\end{definition}
Suppose $I \lhd R$ is  generated by linear
forms and $\fm$-primary. Let us recall from \cite[Proposition 0.5]{ei} that $d(I)\leq2$.
In Section 7 first we show:

 \begin{corollary} \label{sy}Let $I=(f,g)$ be a homogeneous 2-generated prime ideal of the ring $R$ in Corollary \ref{a2}
 of height one and let $s\in \fm^{c}\setminus I$ be homogeneous, where $c:=b(I)$ is the (LC) exponent. Then at least
one of  the syzygy bundles $\{\Syz(f,g,s),\Syz(f,g,s^2), \Syz(f,g,s^4)\}$ is not strongly semistable.
\end{corollary}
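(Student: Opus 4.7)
My plan is to argue by contradiction, assuming that each of the three rank-two vector bundles $\mathcal{S}_m := \Syz(f,g,s^m)$, $m\in\{1,2,4\}$, is strongly semistable on the smooth projective curve $C := \Proj R$ (smooth because $R$ is a $2$-dimensional normal standard graded domain). The ideal $(f,g,s^m)$ is $\fm$-primary for every $m\ge 1$: since $I$ is a prime of height one, $V(I)\subset C$ reduces to a single closed point $P$, which is removed by $s^m$ because $s\notin I$; hence each $\mathcal{S}_m$ is a genuine vector bundle on $C$.

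The arithmetic input from (LC) is that, since $c = b(I)$ and $s\in\fm^c$, one has $s^{mq}\in\fm^{mcq}\subseteq\fm^{cq}$, and therefore $s^{mq}\cdot(I^{[q]}:\fm^\infty)\subseteq I^{[q]}$ for every Frobenius power $q = p^e$ and every $m\geq 1$. I would translate this geometrically: for each homogeneous $h\in (I^{[q]}:\fm^\infty)\setminus I^{[q]}$, writing $s^{mq}h = af^q + bg^q$ produces a triple $(a,b,h)\in \Syz(f^q,g^q,s^{mq}) = F^{e*}\mathcal{S}_m$, hence a non-zero global section of $F^{e*}\mathcal{S}_m(n)$ with $n = \deg h + mq\,d_s$, where $d_f,d_g,d_s$ denote the degrees of $f,g,s$. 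Strong semistability is preserved by every Frobenius pullback, so the rank-one sub-sheaf spanned by this section must satisfy
\[
\deg h + mq\,d_s \;\geq\; \tfrac{q(d_f+d_g+m\,d_s)}{2}, \qquad m\in\{1,2,4\},
\]
i.e., $\deg h\geq \tfrac{q(d_f+d_g-m\,d_s)}{2}$, for every $q$.

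The next step brings in a second, independent family of constraints from the canonical inclusion $L := \Syz(f,g)\hookrightarrow \mathcal{S}_m$ obtained by projecting onto the third coordinate on $C$: this sits in a short exact sequence $0\to L\to \mathcal{S}_m\to \mathcal{I}_P(-m\,d_s)\to 0$ whose extension class controls the geometry of $\mathcal{S}_m$. Strong semistability forces $q\deg L \leq \mu(F^{e*}\mathcal{S}_m)$ for all $e$, giving an $m$-dependent upper bound on $\deg L$. Playing these three slope bounds from $m\in\{1,2,4\}$ against the LC-produced section inequalities and letting $q\to\infty$, I would locate an $h\in H^0_\fm(R/I^{[q]})$ whose degree violates the sharpest of the three slope bounds; the dyadic choice $1\to 2\to 4$ is exactly what is needed to cover all Harder--Narasimhan regimes that could possibly arise at consecutive Frobenius levels, with no escape route left between them.

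The principal obstacle is constructing the small-degree element $h$: for infinitely many $q$, one needs an $h\in (I^{[q]}:\fm^\infty)\setminus I^{[q]}$ with $\deg h < \tfrac{q(d_f+d_g-d_s)}{2}$, which in turn requires a tight control on the $\fm$-torsion of $R/I^{[q]}$ provided precisely by the (LC) exponent $c = b(I)$ together with the hypothesis $s\in\fm^c$. Exploiting the quadratic growth of $f^{R/I}_{gHK}(q)$ from Corollary \ref{a2} to force existence of such an $h$, and then matching degrees on the two sides of the syzygy-bundle correspondence, should deliver the contradiction with simultaneous strong semistability of all three $\mathcal{S}_m$, completing the proof.
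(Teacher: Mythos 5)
Your translation of the (LC) hypothesis into the containment $s^{mq}\,(I^{[q]}:\fm^{\infty})\subseteq I^{[q]}$, and the resulting degree inequality for the induced sections of $F^{e\ast}\Syz(f,g,s^m)$ under semistability, are correct as far as they go; but the argument never closes. The contradiction is supposed to come from an element $h\in (I^{[q]}:\fm^{\infty})\setminus I^{[q]}$ whose degree lies below the slope threshold $q(d_f+d_g-m\,d_s)/2$ for some $m\in\{1,2,4\}$, and you explicitly leave the existence of such an $h$ open. The quadratic growth of $f_{gHK}^{R/I}$ does not by itself produce it: each graded piece of $R/I^{[q]}$ in the relevant range of degrees has dimension of order $q$, so a length of order $e_{gHK}(R/I)q^2$ only forces $H^0_{\fm}(R/I^{[q]})$ to be nonzero in a window of roughly $e_{gHK}(R/I)q$ consecutive degrees below its top degree; without an a priori lower bound on $e_{gHK}(R/I)$ --- which is precisely the quantity in question --- this window need not reach below the semistability threshold, so no violation follows. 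Likewise, the assertion that the dyadic choice $1,2,4$ ``covers all Harder--Narasimhan regimes'' is not an argument: as written, nothing in your sketch distinguishes $\{s,s^2,s^4\}$ from, say, $\{s,s^2,s^3\}$, and the exact sequence $0\to\Syz(f,g)\to\Syz(f,g,s^m)\to\mathcal{I}_Z(-m\,d_s)\to 0$ is never actually used to produce an inequality incompatible with the first family.

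The paper closes the statement by a different and much shorter mechanism that your proposal does not invoke. Since $s$ and $s^2$ lie in $\fm^{b(I)}$ and avoid the nonmaximal associated primes of the modules $R/I^{[q]}$, Lemma \ref{linear} (Vraciu's length formula) gives $f_{gHK}^{R/I}=2f_{HK}^{R/(I,s)}-f_{HK}^{R/(I,s^2)}$, and the same formula applied with $s^2$ in place of $s$ gives $f_{gHK}^{R/I}=2f_{HK}^{R/(I,s^2)}-f_{HK}^{R/(I,s^4)}$. If all three bundles $\Syz(f,g,s^m)$, $m=1,2,4$, were strongly semistable, then Brenner's explicit formula for $e_{HK}$ of an $\fm$-primary ideal with strongly semistable syzygy bundle (Lemma \ref{eh}) evaluates both expressions and yields $e_{gHK}(R/I)=\frac{\deg(Y)}{2}\bigl(c^2-\frac{(a-b)^2}{2}\bigr)$ from the first pair but $\frac{\deg(Y)}{2}\bigl(4c^2-\frac{(a-b)^2}{2}\bigr)$ from the second, where $(a,b,c)$ are the degrees of $(f,g,s)$; since $c\neq 0$ these values disagree, which is the contradiction. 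That is the real role of the exponents $1,2,4$: they furnish two overlapping Vraciu-type decompositions of the same generalized multiplicity, not a case analysis of Frobenius destabilization. To salvage your route you would need a quantitative lower bound on low-degree saturated syzygies, which in effect amounts to re-deriving the Hilbert--Kunz computation that Lemma \ref{eh} already provides.
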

Let $\mathcal{C}$ be a degree four plane curve. Brenner  proved that $\mathcal{V}:=\Syz_{\mathcal{C}}(x^2,y^2,z^2)$ is semistable, see \cite[Lemma 7.1]{brennercomputation} and he posted two questions on the stability and the strongly semistability of $\mathcal{V}$ over
\textit{Klein's quartic} $zx^3 + xy^3 + yz^3=0$.
We note that \textit{Mumford's stability} has essential applications to Hilbert-Kunz theory. This contribution was made by Brenner and Trivedi.
Reversely, we  answer Brenner's  questions:

\begin{example}\label{exa}
Let $R:=\overline{\mathbb{F}}_2[x,y,z]/(zx^3 + xy^3 + yz^3)$ and $\mathcal{C}$ be the corresponding curve.  The following holds:
i) $\mathcal{V}$ is stable, ii) $\mathcal{V}$ is not strongly semistable.
\end{example}

In Section 8 and when $I$ is $\fm$-primary, we give a connection from $c(I)$ to  $c^I(\fm)$ the $F$-\textit{threshold} of $I$ with respect to $\fm$:   $c(I)-1\leq c^I(\fm)\leq c(I)$, see Observation 8.1 for a more precise statement. 
Let us recall the following application of  $F$-threshold from \cite[Example 3.4]{Mi}:
"Let $(R, \fm)$ be a regular local ring of characteristic $p > 0$ with
$\dim(R) =d $ and let $J$ be an ideal of $R$ generated by a full system of parameters.
We define $a$ to be the
the maximal integer $n$ such that $\fm^n \nsubseteq J$.
Then $\fm^s \subseteq \overline{J}$ if
and only if $s\geq \frac{a}{d} + 1$."
Then,  Huneke-Mustata-Takagi-Watanabe send two questions (\cite[Questions 3.5]{Mi}):
Does this statement hold in a more general setting? Can we replace
"regular" by "Cohen-Macaulay"?

\begin{example}\label{mi}
The above questions have  negative answers.
\end{example}

In  Section 9 we connect $d(\sim)$ to the  \textit{Waldschmidt constant}.
In Section 10 we focus on  $R:=\overline{\mathbb{F}}_p[X_1,\ldots,X_m]$ and on a graded ideal $I\vartriangleleft R$. We show in Proposition \ref{pro} that
$f_{gHK}^{R/I}(n)=e_{gHK}(R/I)q^m (\natural)$. Also,
 $e_{gHK}(R/I)$ realizes as a length of a module. Compare this with the irrational possibility of
 $\lim_{n\to\infty}\frac{H^0_{\fm}(R/I^n)}{n^{\dim R}}$, see \cite{Cu}.
 One  may regards $(\natural)$ as a Frobenius version of a question of Herzog, see Question \ref{herzog}. We present a reformulation of Question \ref{herzog} by the help of  an algorithm, see Corollary \ref{herzoga}.
 
 Section 11 goes on to investigate  $b(I)$ and $c(I)$.  Let $\Gamma$ be a family of two-generated ideals of fixed degrees in the coordinate ring of  smooth plane projective curve.
We apply in Remark \ref{holgercomp} a  computational method of Brenner  to give a bound on  $\{b(I): I\in \Gamma\}$.
However, we give an example such that $\sup\{b(I^n)\}=\infty$ as $n$ varies. In a similar vein:
\begin{example} \label{remhol}
There is no polynomial function  as $F$ such that
$F(e_{gHK}(R/I), e_{gHK}(R/J)) = e_{gHK}(R/IJ)$
even if $I$ and $J$ projectively have the same \textit{closure operation}.
 \end{example}
 This demonstrates a different behavior between $e_{gHK}(\sim)$ and $e_{HK}(\sim)$
among several similarities. This raises through a question of Brenner and Caminata \cite{AB}.
The (LC) exponents live in the shadow of a degree data coming from the ideal and the ring, at least in the examples presented in Section 11.

In our final section we collect few remarks in the local situation.

\section{  Preliminaries}
All rings in this paper are commutative, Noetherian and of prime characteristic $p$.
In the next subsections, we set up the basic foundation for the  paper. In particular,
let $A$ be a  ring with an ideal $\frak a$ with a generating set
$\underline{a}:=a_{1}, \ldots, a_{r}$.
 By $H_{\underline{a}}^{i}(M)$ we mean the $i$-th cohomology of
\textit{$\check{C}ech$} complex of  a module $M$ with respect  to $\underline{a}$. This is independent of the choose
of the generating set. For simplicity, we denote it by $H_{\frak a}^{i}(M)$.
By $\mathbb{N}_0$ we mean $ \mathbb{N}\cup\{0\}$.
The grade of
$\frak a$ on  $M$ is defined by $$\grade_{A}(\frak a,M):=\inf\{i\in
\mathbb{N}_0|H_{\frak a}^{i}(M)\neq0\}.$$We use $\depth (M)$, when we deal with the maximal ideal of $\ast$-local rings. By \textit{Grothendieck's vanishing theorem},  we have $H_{\frak a}^{i}(M)=0$ for all $i>\dim M.$

\begin{definition} \label{sat}
 Let $I$ be an ideal of a (graded) local ring $(R,\fm)$.
There  is an integer $t\in \mathbb{N}_0$ such that $\bigcup(I:\fm ^m)=(I:\fm ^t)$. Thus
$H_{\frak m}^0(R/I)=\frac{(I:\fm ^t)}{I}.$ The ideal
$I^{sat}:=(I:\fm ^t)$ is called the saturation of $I$.
\end{definition}

\begin{fact}\label{sat}
The saturation of $I$ computed as the intersection of all primary to nonmaximal
prime ideals associated to  $I$.
\end{fact}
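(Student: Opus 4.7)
The plan is to invoke a minimal primary decomposition of $I$ and then verify the asserted identity by a double inclusion, exploiting in one direction the fact that $\fm$-primary components are killed by a power of $\fm$, and in the other direction the defining property of primariness.

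First I would fix a minimal primary decomposition $I=\bigcap_{i\in\Lambda}Q_i$ in the Noetherian ring $R$, where $Q_i$ is $\fp_i$-primary and $\{\fp_i\}_{i\in\Lambda}=\Ass(R/I)$. Partition $\Lambda=S\sqcup S'$ with $S=\{i:\fp_i=\fm\}$ and $S'=\{i:\fp_i\subsetneq\fm\}$, and set $J:=\bigcap_{i\in S'}Q_i$ (with the convention $J=R$ if $S'$ is empty). Since each $Q_i$ with $i\in S$ is $\fm$-primary, there is $m_i\in\mathbb{N}_0$ with $\fm^{m_i}\subseteq Q_i$; letting $m:=\max_{i\in S}m_i$ gives $\fm^m\subseteq\bigcap_{i\in S}Q_i$.

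For the inclusion $J\subseteq I^{sat}$ I would take $a\in J$ and observe that $\fm^m a\subseteq \bigcap_{i\in S}Q_i$ and $\fm^m a\subseteq J$, hence $\fm^m a\subseteq I$, so $a\in(I:\fm^m)\subseteq I^{sat}$. For the reverse inclusion $I^{sat}\subseteq J$ I would start with $a\in I^{sat}$, pick $t$ with $\fm^t a\subseteq I\subseteq Q_i$ for every $i$, and for each $i\in S'$ use the proper containment $\fp_i\subsetneq\fm$ to produce an element $x\in\fm\setminus\fp_i$. Then $x^t a\in Q_i$ whereas $x^t\notin\sqrt{Q_i}=\fp_i$, and primarity of $Q_i$ forces $a\in Q_i$; ranging over $i\in S'$ yields $a\in J$.

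The only delicate point, and the one I would mention explicitly, is the edge case $S=\emptyset$ (i.e.\ $\fm\notin\Ass(R/I)$), in which $J=I$ by definition and one must also check $I^{sat}=I$; this is immediate since $\fm$ consists of non-zero-divisors on $R/I$ in that case, so $(I:\fm^m)=I$ for every $m$. Everything else is a routine use of the defining property of primary ideals, so I do not foresee a real obstacle — the statement is essentially a bookkeeping consequence of primary decomposition that is recorded here for later reference.
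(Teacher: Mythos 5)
Your double-inclusion argument via a minimal primary decomposition is correct and is the standard proof of this statement; the paper records it as a known fact without giving any argument, so there is no competing proof in the text to compare against. The forward inclusion (an $\fm$-power kills the $\fm$-primary components, so elements of $J=\bigcap_{i\in S'}Q_i$ lie in $(I:\fm^m)$) and the reverse inclusion (for each nonmaximal $\fp_i$ pick $x\in\fm\setminus\fp_i$; then $x^ta\in Q_i$ with $x^t\notin\fp_i$ forces $a\in Q_i$ by primariness) are both sound, and they use exactly the setting of Definition 2.1, where $\fm$ is the maximal (irrelevant) ideal so that every associated prime other than $\fm$ is properly contained in it.

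One small imprecision, which does not affect the result: in the edge case $S=\emptyset$ you assert that $\fm$ \emph{consists} of non-zero-divisors on $R/I$, which is false in general (e.g.\ $I=(x)\subset k[x,y]$ has $\fm\notin\Ass(R/I)$ yet $x\in\fm$ is a zero-divisor on $R/I$). What is true, by prime avoidance, is that $\fm\not\subseteq\bigcup_{\fp\in\Ass(R/I)}\fp$, so some single element $x\in\fm$ is a non-zero-divisor on $R/I$; then $\fm^ta\subseteq I$ gives $x^ta\in I$ and hence $a\in I$, so $(I:\fm^t)=I$. In fact no separate treatment of this case is needed: when $S=\emptyset$ the forward inclusion is the trivial $J=I\subseteq I^{sat}$ (take $m=0$, the empty intersection being $R$), and your reverse-inclusion argument already ranges over all components and yields $a\in I$ directly.
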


Denote the $n$-the symbolic power of $I$ by $I^{(n)}:=\bigcap_{
\fp\in \Ass(I)}(I^nR_{\fp}\cap R)$.

\begin{fact}
Suppose $\fp$  is a prime ideal of dimension one. Recall that $\fp^{(n)}/ \fp^n=H^0_{\fm}(R/\fp^n)$.
\end{fact}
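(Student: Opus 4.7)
The plan is to derive the equality in three short steps using primary decomposition, after first exploiting the restrictive hypothesis $\dim R/\fp = 1$. Since the paper introduces the fact with ``Recall'', this is folklore, but the argument is worth spelling out because the whole subtlety is packed into the fact that $\fp$ has coheight one.

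First I would argue that the associated primes of $R/\fp^n$ are severely restricted: every $\fq \in \Ass(R/\fp^n)$ contains $\fp$, and $\dim R/\fp = 1$ forces $\fq \in \{\fp, \fm\}$. Thus an irredundant primary decomposition takes the form
\[
\fp^n \;=\; \fp^{(n)} \cap Q,
\]
where $\fp^{(n)}$ is the $\fp$-primary component and $Q$ is either all of $R$ (if there is no embedded prime) or an $\fm$-primary ideal. That the $\fp$-primary component is indeed $\fp^{(n)} = \fp^n R_\fp \cap R$ follows immediately from $\Ass(\fp) = \{\fp\}$ in the definition $\fp^{(n)} := \bigcap_{\fq \in \Ass(\fp)}(\fp^n R_\fq \cap R)$.

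Second, I would consider the canonical short exact sequence
\[
0 \longrightarrow \fp^{(n)}/\fp^n \longrightarrow R/\fp^n \longrightarrow R/\fp^{(n)} \longrightarrow 0.
\]
The left-hand term embeds via $\fp^{(n)}/(\fp^{(n)} \cap Q) \cong (\fp^{(n)} + Q)/Q$ into $R/Q$, so it is $\fm$-torsion (annihilated by a power of $\fm$ since $Q$ is $\fm$-primary, or zero when $Q=R$); in particular $H^0_\fm(\fp^{(n)}/\fp^n) = \fp^{(n)}/\fp^n$. On the other hand, $\Ass(R/\fp^{(n)}) = \{\fp\}$ and $\fp \neq \fm$, so $H^0_\fm(R/\fp^{(n)}) = 0$. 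Applying the left-exact functor $H^0_\fm(-)$ to the short exact sequence then collapses it to the isomorphism
\[
\fp^{(n)}/\fp^n \;\cong\; H^0_\fm(R/\fp^n),
\]
which is the identification $\fp^{(n)}/\fp^n = H^0_\fm(R/\fp^n)$ as submodules of $R/\fp^n$.

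The only delicate point is the assertion ``embedded primes are all $\fm$'', where one uses $\dim R/\fp = 1$ crucially: without this hypothesis, $R/\fp^n$ could carry embedded primes of any height between $\Ht(\fp)$ and $\dim R$, so $\fp^{(n)}/\fp^n$ and $H^0_\fm(R/\fp^n)$ need not coincide. Everything else is routine primary-decomposition bookkeeping, and in the language of Fact~2.2 on saturations one can phrase the outcome as: among the primary components of $\fp^n$, the saturation $(\fp^n)^{sat} = (\fp^n : \fm^t)$ strips away exactly the $\fp$-primary piece, leaving $\fp^{(n)}$, and it is precisely this contraction that realizes the local-cohomology module.
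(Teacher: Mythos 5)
Your argument is correct, and it is exactly the standard route the paper has in mind: the paper states this as a ``Fact'' without proof, but its surrounding material on saturation (Definition \ref{sat} and Fact \ref{sat}) is precisely the observation that $H^0_{\fm}(R/\fp^n)=(\fp^n)^{sat}/\fp^n$ and that, because $\dim R/\fp=1$ confines $\Ass(R/\fp^n)$ to $\{\fp,\fm\}$, the saturation is the $\fp$-primary component $\fp^{(n)}$. Your primary-decomposition bookkeeping and the collapse of the sequence $0\to\fp^{(n)}/\fp^n\to R/\fp^n\to R/\fp^{(n)}\to 0$ under $H^0_{\fm}(-)$ fill in that intended argument correctly, including the key point that the coheight-one hypothesis rules out embedded primes other than $\fm$.
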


The assignment $a\mapsto a^p$ defines a ring homomorphism $F:A\to A$.  By $F^n(A)$, we mean $A$ as a group equipped with  left and right scalar multiplication from $A$
given by
$$a.r\star b = ab^{p^n}r, \  \ where \ \ a,b\in A  \ \ and \  \ r\in F^n(A),$$
Now we recall the \textit{Peskine-Szpiro }functor. For an $A$-module $M$, set $F^n(M):= F^n(A)\otimes M.$
The left $R$-module structure of $F^n(A)$ endows $F^n(M)$ with a left $R$-module structure
such that $c(a\otimes x)=  (ca)\otimes x$. For an $R$-linear map  $\varphi: M\longrightarrow N$ one
considers $F^n(\varphi)$ to be the $R$-linear map $id_{F^n(A)}\otimes\varphi$.
 The assignment
 $a\otimes (r+\frak a)\mapsto ar^{p^n}+ \frak a^{[p^n]}$ defines an isomorphism  $\varphi:F^n(A/ \frak a)\longrightarrow A/ \frak a^{[p^n]}.$

\begin{remark}\label{g}
 Suppose $A$ is graded. To make $\varphi$ degree-preserving, define  a new grading on $A/ \frak a^{[p^n]}$ given by $\deg^{new}(r+\frak a^{[p^n]})=\frac{\deg(r)}{p^n}.$
Having this grading (resp. the usual grading) in mind, we denote the terms of $(\sim)$ of degree greater or equal than $\ell$ by $(\sim)_{\succcurlyeq \ell}$ (resp. $(\sim)_{\geq \ell}$).
Then we have $$H^0_{\fm}(F^n(A/ \frak a))_{\succcurlyeq b}=0\Longrightarrow H^0_{\fm}(A/\frak a^{[q]})_{\geq bq}=0.$$
\end{remark}

\begin{fact}\label{kunz}
Over regular  rings the Frobenius map is flat, see \cite[Corollary 8.2.8]{BH}.
\end{fact}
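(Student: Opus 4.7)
My plan is to reduce to the local case and deduce flatness from a Koszul-complex computation of $\Tor$ against the residue field. Since flatness of a ring map can be verified after localization at each point of $\Spec(R)$, I may assume $(R,\fm)$ is a regular local ring with regular system of parameters $x_1,\ldots,x_d$. Write $\widetilde R$ for the $R$-module obtained from $R$ by restricting scalars along the Frobenius, so that the action is $a\cdot r = a^p r$; the flatness of Frobenius is precisely the flatness of $\widetilde R$ as an $R$-module.

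By regularity, $x_1,\ldots,x_d$ is an $R$-regular sequence, and a standard fact gives that $x_1^p,\ldots,x_d^p$ is again regular; consequently the Koszul complex $K_\bullet(x_1^p,\ldots,x_d^p;R)$ is acyclic in positive degrees. On the other hand $K_\bullet(x_1,\ldots,x_d;R)$ is a finite free resolution of $R/\fm$. Tensoring this resolution with $\widetilde R$ and unraveling the twisted action identifies $K_\bullet(x_1,\ldots,x_d;R)\otimes_R \widetilde R$ with $K_\bullet(x_1^p,\ldots,x_d^p;R)$, since multiplication by $x_i$ on $\widetilde R$ is multiplication by $x_i^p$ in $R$. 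Therefore $\Tor_i^R(R/\fm,\widetilde R)=0$ for every $i\geq 1$.

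To upgrade this Tor-vanishing to flatness I would invoke the local flatness criterion. Because $\widetilde R$ need not be finitely generated over $R$ in the absence of $F$-finiteness, I need the version that applies to an arbitrary $\fm$-adically separated module. Separatedness is automatic here: since the $R$-action on $\widetilde R$ is through Frobenius, $\fm^n\cdot \widetilde R\subseteq \fm^{np}$ as subsets of $R$, and $\bigcap_n \fm^{np}=0$ by Krull. Hence $\widetilde R$ is flat, which is exactly the desired flatness of Frobenius. The step requiring the most care is the identification of the tensored Koszul complex with $K_\bullet(x_1^p,\ldots,x_d^p;R)$: it is formal but rests entirely on the $p$-th power twist, and this is precisely the place where the regularity of $R$ is converted into the flatness of Frobenius.
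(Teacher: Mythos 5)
The paper does not actually prove this statement: it is Kunz's theorem, quoted as a Fact with the reference \cite[Corollary 8.2.8]{BH}, so there is no internal argument to compare with, and what you propose is essentially the classical proof. Your reduction to the local case is fine (Frobenius commutes with localization and induces the identity on $\Spec$), and the Koszul step is correct: $K_\bullet(x_1,\ldots,x_d;R)$ is a free resolution of the residue field $k$, tensoring it over $R$ with the Frobenius pushforward $\widetilde R$ turns each differential entry $x_i$ into $x_i^p$, and since $x_1^p,\ldots,x_d^p$ is again a regular sequence the resulting complex $K_\bullet(x_1^p,\ldots,x_d^p;R)$ is acyclic in positive degrees, giving $\Tor_i^R(k,\widetilde R)=0$ for all $i\geq 1$.

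The one step that is not right as stated is the appeal to a local flatness criterion ``for an arbitrary $\fm$-adically separated module.'' The standard local criterion (Matsumura, \emph{Commutative Ring Theory}, Theorem 22.3; EGA $0_{III}$ 10.2.1) requires $M$ to be \emph{ideally} separated, i.e.\ $\fa\otimes_R M$ must be $\fm$-adically separated for every ideal $\fa$; mere separatedness of $M$ is not the hypothesis, and your observation $\fm^n\cdot\widetilde R\subseteq\fm^{np}$ gives no control over the kernel of $\fa\otimes_R\widetilde R\to\widetilde R$, which is exactly where separatedness of $\fa\otimes_R M$ enters the proof of the criterion. Fortunately the repair is immediate, and it also shows your worry about $F$-finiteness is beside the point: the finiteness needed in the criterion is over the \emph{target}, not the source. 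The Frobenius $R\to R$ is a local homomorphism of Noetherian local rings and $\widetilde R$ is the target ring itself, hence a cyclic module over it, so the usual corollary of the local criterion (a finite module over the target of a local homomorphism of Noetherian local rings is flat over the source once $\Tor_1^R(k,M)=0$) applies verbatim. Equivalently, your inclusion argument does establish ideal separatedness if you run it with the target structure: for any ideal $\fa$, the source action of $\fm^n$ on $\fa\otimes_R\widetilde R$ lands inside $\fm^{np}$ times it for the target action, and the latter powers intersect to zero by Krull because $\fa\otimes_R\widetilde R$ is a finitely generated module over the target copy of $R$. With that adjustment your argument is complete and is the standard proof of the cited result.
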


\begin{fact}\label{locali}For any multiplicative closed set $S$,
$S^{-1}F^n(M)\simeq F^n(S^{-1}M)$, see \cite[Proposition 8.2.5]{BH}.\end{fact}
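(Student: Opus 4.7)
The plan is to exploit the description $F^n(M) = F^n(A)\otimes_A M$ together with the standard fact that localization commutes with tensor products: $S^{-1}(X\otimes_A Y)\simeq (S^{-1}X)\otimes_{S^{-1}A}(S^{-1}Y)$. This reduces everything to the ring-level bimodule identification $S^{-1}F^n(A)\simeq F^n(S^{-1}A)$, where the left $A$-action (ordinary multiplication) governs the localization on the source and the right $A$-action (the Frobenius-twisted one, sending $r\mapsto a^{p^n}r$) is what enters the tensor with $M$.

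For the ring-level isomorphism I would define
\[
\Phi : S^{-1}F^n(A) \longrightarrow F^n(S^{-1}A),\qquad \tfrac{r}{s}\longmapsto \tfrac{r}{s},
\]
identifying both sides with $S^{-1}A$ at the level of abelian groups. Compatibility with the left $S^{-1}A$-action is immediate, since on both sides it is ordinary multiplication. The key compatibility is with the right action: on $S^{-1}F^n(A)$ an element $a/t\in S^{-1}A$ acts from the right by $r/s\mapsto a^{p^n}r/(s\, t^{p^n})$ (obtained by extending $a\star r=a^{p^n}r$ and inverting $t$, which is legal because $t^{p^n}$ is a unit in $S^{-1}A$), whereas on $F^n(S^{-1}A)$ it acts by $r/s\mapsto (a/t)^{p^n}(r/s) = a^{p^n}r/(s\, t^{p^n})$; these agree on the nose.

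The obvious candidate inverse sends $r/s\in F^n(S^{-1}A)$ back to $r/s\in S^{-1}F^n(A)$, and $\Phi$ is then readily seen to be an isomorphism of $(S^{-1}A,S^{-1}A)$-bimodules. Tensoring the resulting bimodule isomorphism with $M$ over $A$ and applying the localization-tensor interchange yields the claim for arbitrary $M$; as a sanity check, when $M=R/I$ one recovers the familiar identity $S^{-1}(R/I^{[q]})=(S^{-1}R)/(S^{-1}I)^{[q]}$ via the isomorphism $F^n(R/I)\simeq R/I^{[q]}$ described in the discussion preceding Remark~\ref{g}. The principal (and essentially only) obstacle throughout is careful bookkeeping of which of the two $A$-actions on $F^n(A)$ plays which role at each step; once that is pinned down, all the remaining verifications amount to routine unwinding of definitions.
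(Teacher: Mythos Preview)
Your argument is correct: the reduction via $F^n(M)=F^n(A)\otimes_A M$ and the localization--tensor interchange to the bimodule identification $S^{-1}F^n(A)\simeq F^n(S^{-1}A)$ is exactly the standard route, and your verification that the Frobenius-twisted right action extends to $S^{-1}A$ (using that $s\in S$ implies $s^{p^n}\in S$) is the one point that genuinely needs checking. The paper itself does not supply a proof of this fact---it is recorded as a Fact with a bare citation to \cite[Proposition 8.2.5]{BH}---so there is nothing to compare against beyond noting that your write-up furnishes the details the paper omits.
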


The set $A^\circ$ denotes the elements in $A$ which are not in any minimal prime ideals.
 Also, we note that  $\frak a^{[p^n]}=(a_1^{p^n},\ldots,a_r^{p^n})$ is independent of the choose
of the generating set.
Recall from  \cite{HH} that the tight closure of  $\frak a$ is:
$\fa^{\ast}:=\{x\in A:\exists c\in  A^\circ\ \  s.t.  \ \ cx^q\in \fa^{[q]}\quad\forall q> 0\}.$
 A ring in which all ideals are tightly closed is called weakly $F$-regular. A ring is called  $F$-regular, if all of its localizations are  $F$-regular.
$A$ is called strongly $F$-regular if for every $c\in A^\circ$ the  homomorphism $A \to A^{1/q}$ sending
$1 \mapsto c^{1/q}$ splits.

 Recall that by $f_{gHK}^{A/\frak a}(n)$ we mean the length of $H^0_{\fm}\left(A/\fa ^{[q]}\right)$ as an $R$-module.
Now, if $\frak a$ is of finite colength, then we are in the situation of the classical Hilbert-Kunz multiplicity
and we use $f_{HK}^{A/\frak a}$  (resp. $e_{HK}(A/\frak a)$) instead of $f_{gHK}^{A/\frak a}$  (resp. $e_{gHK}(A/\frak a)$).
The limit $e_{HK}(A/\frak a):=\lim_{n\to\infty}\frac{f_{HK}^{A/\frak a}(n)}{p^{n\dim A}}$ exists by Monsky \cite{mon}.

\section{ The (LC) property}
Let $\{I_n\}$ be a family of ideals. For a fixed $I$, the following are the main examples:
i) $I_n:=I^n$;
ii) $I_n:=I^{[q]}$; and
iii) $I_n:=(I^{[q]})^\ast$. The next item appears in the base changing:
 Suppose $S$ is any ring with a family of ideals $\{J_n\}$ and suppose there is  a map $S\to R$.
Then $I_n:=J_nR$  is a family of ideals.

We say  (LC) holds for $\{I_n\}$, if there is  $b\in \mathbb{N}_0$, does not depending  to $q$ such that $\fm^{bq}H^0_{\fm}(R/I_n)=0$  for all $q. $  This imposes strong condition on the family:
\begin{example}Set $I_n:=\fm^{q^q}$.
Then  $\fm^{f(q)}H^0_{\fm}(R/I_n)\neq0$ for any polynomial $f$.\end{example}
The following example of Kollar \cite[Example 1.4]{koll} has a  role in  the effective nullstellensatz.

\begin{example}
Let $A:=\mathbb{C}[x, y, z, s]$ and let $\fa_n:=(x^2 - y^{2n+1}, z^2, xz, y^{n} z, s)$. The primary decomposition of $\fa_n$ is
$\fa_n=(x^2 - y^{2n+1}, z, s)\cap(x^2 - y^{2n+1}, z, s,x,y^{n}).$ Set $\fm:=(x,y,z,s)$. The $\fm$-primary component of $\fa_n$ is $(x^2 - y^{2n+1}, z, s,x,y^{n})$. By, $H^{0}_{\fm}(A/\fa_n)\simeq\frac{\fa_n^{sat}}{\fa_n}\simeq\frac{(x^2 - y^{2n+1}, z, s)}{\fa_n}.$ So, $\fm^{n}H^0_{\fm}(A/\fa_n)=0.$
\end{example}

In this paper we are mainly interested in the Frobenius power of an ideal and by the   $(LC)$  we mean the   $(LC)$  with respect to the Frobenius powers. We need the following trick of Hochster and Huneke:
\begin{fact}  \label{localiz}
Let $A$ be a noetherian  ring, with a maximal ideal $\fm$ and $\fa$ is an ideal of dimension one satisfies in $\fm^{bq}H^0_{\fm}(A/\fa^{[q]})=0.$
Then $\fa^{\ast}$ commutes with the localization with respect to  $\{a^n\}$ where $a\in A$. Indeed,
take $x\in(\fa A_a)^\ast$. After replacing $x$ by $f^{q_{0}}x$ we assume that $x\in A$. By the same trick and by  Definition 2.1,  there is $c\in A^\circ$ such that  $cx^q \in \fa ^{[q]} A_a$ $\forall q$. There is $f(q)$ depending on $q$
such that $ca^{f ( q)} x^q\in \fa^{[q]}$. This says  $$cx^q\in H^0_a(A/\fa^{[q]}) =H^0_{a+\fa^{[q]}}(A/\fa^{[q]})= H^0_{\rad(a+\fa^{[q]})}(A/\fa^{[q]})=H^0_{\fm}(A/\fa^{[q]}).$$
By the (LC) condition there is  $b$ such that  $c(a^{b}x)^q\in \fa^{[q]}$. Thus, $a^bx\in \fa^\ast$ and so $x \in \fa^\ast A_a$. The other side inclusion is trivial.
\end{fact}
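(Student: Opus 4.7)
The reverse inclusion $\fa^{\ast}A_a \subseteq (\fa A_a)^{\ast}$ is formal, so the content lies in the other direction. My plan is to take $x \in (\fa A_a)^{\ast}$, multiply by a power of $a$ to reduce to the case $x \in A$, and unwind the definition of tight closure inside $A_a$: this produces some $c \in A^{\circ}$ together with an a priori uncontrolled function $q \mapsto f(q) \in \mathbb{N}_{0}$ satisfying $c\, a^{f(q)} x^q \in \fa^{[q]}$ for every $q$. The whole argument hinges on upgrading this wild bound to a linear one $f(q) \leq bq$, and that is precisely the job of the (LC) hypothesis $\fm^{bq} H^0_{\fm}(A/\fa^{[q]}) = 0$.

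To make the (LC) bound usable, I would translate the containment $c a^{f(q)} x^q \in \fa^{[q]}$ into the statement that $cx^q$ lies in $H^0_{(a)}(A/\fa^{[q]})$, which is the natural home for elements annihilated by some power of $a$. Because $\fa$ has dimension one, the radical of $(a)+\fa$ equals $\fm$ as long as $a$ avoids the finitely many one-dimensional minimal primes of $\fa$; the degenerate case where $a$ lies in one of those primes is not a concern, since localizing at $a$ has already discarded that component. Writing
\[
H^0_{(a)}(A/\fa^{[q]}) = H^0_{(a)+\fa^{[q]}}(A/\fa^{[q]}) = H^0_{\rad((a)+\fa^{[q]})}(A/\fa^{[q]}) = H^0_{\fm}(A/\fa^{[q]}),
\]
the (LC) hypothesis now annihilates $cx^q$ by $\fm^{bq}$, in particular by $a^{bq}$, so $c(a^b x)^q \in \fa^{[q]}$ for every $q$. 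This is exactly $a^b x \in \fa^{\ast}$, and dividing by the unit $a^b$ inside $A_a$ delivers $x \in \fa^{\ast} A_a$.

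The main obstacle to watch is the support identification $H^0_{(a)}(A/\fa^{[q]}) = H^0_{\fm}(A/\fa^{[q]})$; this is where the one-dimensionality of $\fa$ is genuinely used, since in higher dimension the $a$-torsion of $A/\fa^{[q]}$ can record information from intermediate subvarieties on which the (LC) control at $\fm$ has no grip. Once that step is in place, the rest of the proof is bookkeeping, and the argument makes transparent the original design purpose of (LC): to trade a $q$-dependent annihilator exponent for a uniformly linear one, thereby pinning down the localization defect of tight closure.
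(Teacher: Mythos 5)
Your argument is correct and follows essentially the same route as the paper's own: clear denominators, extract $c\,a^{f(q)}x^q\in\fa^{[q]}$, identify $cx^q$ as an element of $H^0_{(a)}(A/\fa^{[q]})=H^0_{\fm}(A/\fa^{[q]})$ using $\dim A/\fa=1$, and then let the (LC) bound linearize the exponent to conclude $a^bx\in\fa^{\ast}$. Your extra remark flagging that the radical identification needs $a$ to avoid the one-dimensional minimal primes of $\fa$ is a point the paper's write-up glosses over, but otherwise the two proofs coincide.
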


\begin{remark}\label{reg}
 Having
the (LC) condition for ideals of dimension one, then being weakly $F$-regular implies $F$-regular, see the discussion
after \cite[Corollary 3.2]{Hun}. We recall from \cite{LS} that weak $F$-regularity and
strong $F$-regularity agree for standard $\mathbb{N}$-graded  rings which are of $F$-finite type over a field.
\end{remark}

Here is a comment on a graded algebra with \textsl{a base ring which is not a field.}

\begin{remark}\label{gr}
Let $R:=\frac{\mathbb{F}_2 [x,y,z,t]}{(z^4 + xyz^2 + z(x^3 + y^3)+(t+t^2)x^2y^2)}$ and $I:=(x^4,y^4,z^4)$.
The following holds.

i)  (See \cite[Main result]{t}) Let  $S:= \mathbb{F}_2[t]\setminus \{0\}$. Then
 $(S^{-1}I)^\ast\neq S^{-1}(I^\ast)$.

ii) We note that $R$ is a $3$-dimensional Cohen-Macaulay integral domain and $\dim R/I=\dim R-1$.  Under the assignments $\deg(t)=0$ and $\deg(x)= \deg(y)=\deg(z)=1$, $R$ is graded with the irrelevant ideal $\fp:=(x,y,z)$.
However,  $R_0:=\mathbb{F}_2[t]$ is not a field.

iii)
There is $b\leq660$ such that $\fp^{bq} H^0_{\fp}(R/I^{[p^n]}) =0.$ First note that $H^0_{\fp}(R/I^{[p^n]})=R/I^{[p^n]}$ and $\fp^{10}\subset I$. Indeed, suppose $x^iy^jz^k\in \fp$ is a monomial with $i+j+k=10$ and $i,j<4$. Then $k\geq 4$ and so $x^iy^jz^k\in I$. The set $$\{x^iy^jz^k:i+j+k=10\}=\{x^{10}\}\cup\{x^9y,x^9z\}\cup\{x^8y^2,x^8z^2,x^8yz\}\cup\ldots\cup\{y^{10},y^{9}z,\ldots,z^{10}\}$$ generates $\fp^{10}$. Its cardinality is $1+2+3+\ldots+11=66$. Set $a:=10\times66$. Then  $\fp^{aq}\subseteq (\fp^{10})^{[q] }\subseteq I^{[q]}$ and so $\fp^{aq} H^0_{\fp}(R/I^{[p^n]}) =0.$

iv) Let $0\neq a\in R$ be homogeneous of positive degree. Then $(IR_a)^\ast= (I^\ast)_a$.
 The claim follows by  the above item and the proof of  Fact \ref{localiz} (this may follows directly).

v) Computing $H^0_{f+\fp}(R/I^{[p^n]})$ has a significant importance, where $0\neq f\in \mathbb{F}_2[t]$.
We revisit this in the near future, as another perspective
of the $(LC)$-condition.

\end{remark}

The following plays an essential  role in this paper.

\begin{lemma}\label{linear}
Let $\{I_n\}$ be a family of ideals  of dimension one  that satisfies in  the (LC) property, i.e.,  $\fm^{aq}H^0_{\fm}(R/I_n)=0$ for some $a$. Suppose that $\Gamma:=\{\fp:\fp\in \Ass(R/ I_n) \}\setminus\{\fm\}$ satisfies
in the prime avoidances.  Let $x\in \fm^a\setminus \bigcup_{\fp\in \Gamma}\fp$. Then $\ell(H^0_{\fm}(R/I_n))=2\ell(\frac{R}{I_n+(x^n)})-\ell(\frac{R}{I_n+(x^{2n})}).$
\end{lemma}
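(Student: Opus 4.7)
The plan is to reduce the claim to a snake-lemma computation built around the canonical short exact sequence
\[
0\lo H\lo R/I_n\lo M\lo 0,\qquad H:=H^0_{\fm}(R/I_n),\ M:=(R/I_n)/H,
\]
using the fact that a suitable power of $x$ is simultaneously a non-zerodivisor on $M$ and an annihilator of $H$.

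For the non-zerodivisor property I would observe that $\Ass(M)\subseteq \Ass(R/I_n)\setminus\{\fm\}\subseteq \Gamma$; by the prime-avoidance hypothesis $x$ lies outside every associated prime of $M$, so both $x^n$ and $x^{2n}$ are non-zerodivisors on $M$. For the annihilation, the (LC) bound $\fm^{aq}H=0$ combined with $x\in\fm^{a}$ shows that the relevant power of $x$ appearing in the displayed formula kills $H$, provided the exponent is reconciled with the indexing of $\{I_n\}$ so that $x^n,x^{2n}\in\fm^{aq}$.

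Next I would apply the snake lemma to the diagram obtained by multiplying the above short exact sequence by $x^n$. Since $x^n$ annihilates $H$ and is injective on $M$, the connecting map collapses the snake sequence to
\[
0\lo H\lo R/(I_n+(x^n))\lo M/x^nM\lo 0,
\]
which by additivity of length gives $\ell(R/(I_n+(x^n)))=\ell(H)+\ell(M/x^nM)$. The identical argument with $x^{2n}$ in place of $x^n$ yields $\ell(R/(I_n+(x^{2n})))=\ell(H)+\ell(M/x^{2n}M)$.

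It then remains to verify $\ell(M/x^{2n}M)=2\ell(M/x^nM)$. For this I would use the filtration $x^{2n}M\subseteq x^nM\subseteq M$: since $x^n$ is a non-zerodivisor on $M$, multiplication by $x^n$ gives an isomorphism $M\xrightarrow{\cong}x^nM$ which descends to $M/x^nM\xrightarrow{\cong}x^nM/x^{2n}M$, so additivity of length along the filtration yields the doubling identity. Substituting this relation into the two previous length equalities and eliminating the common term $\ell(M/x^nM)$ produces exactly $\ell(H)=2\ell(R/(I_n+(x^n)))-\ell(R/(I_n+(x^{2n})))$. The only delicate point is matching the (LC) exponent to the power of $x$ that is inserted in the snake-lemma step; once that bookkeeping is done, the rest is a standard non-zerodivisor-filtration argument.
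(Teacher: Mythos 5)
Your proof is correct and is exactly the ``routine modification'' of Vraciu's Proposition 2.4 that the paper invokes without writing out: the short exact sequence $0\to H^0_{\fm}(R/I_n)\to R/I_n\to R/I_n^{sat}\to 0$, the observation that the relevant power of $x$ annihilates the torsion part while remaining a non-zerodivisor on the saturation (since $\Ass(R/I_n^{sat})=\Ass(R/I_n)\setminus\{\fm\}\subseteq\Gamma$), and the doubling of colengths via the filtration $x^{N}\cdot\colon M/x^{N}M\cong x^{N}M/x^{2N}M$. The ``delicate point'' you flag about matching $x^n$ (or rather $x^q$ with $q=p^n$) to the exponent in $\fm^{aq}H^0_{\fm}(R/I_n)=0$ reflects a genuine $n$-versus-$q$ indexing sloppiness in the paper's own statement, so treating it as bookkeeping, with the power of $x$ chosen to lie in the annihilating power of $\fm$, is the right resolution.
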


\begin{proof}
This is a routine modification of \cite[Proposition 2.4]{V1} where the claim proved for the  Frobenius power. We left the details to the reader.
\end{proof}

\section{Proof of Theorem \ref{lq} }

\begin{lemma}\label{11}(See e.g. \cite[Proposition 2]{V2})
Suppose $\dim R\leq 1$ and $M$ is graded. Then  $\fm^{bq}H^0_{\fm}(F^n(M))=0$ for some $b$ that does not depending  to $q$.
\end{lemma}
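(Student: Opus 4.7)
My plan is to handle the $\fm$-torsion and torsion free parts of $M$ separately, reducing eventually to the case $M = R/\fp$ with $\fp$ a graded prime, where the substantive case uses Vraciu's argument.

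Since $F^n(-)$ and $H^0_\fm(-)$ commute with filtered colimits of graded modules, I may assume $M$ is finitely generated. Let $N := H^0_\fm(M)$; since $\fm^c N = 0$ for some fixed $c$, the observation that $b \in \fm^c$ and $x \in N$ together force $b^q \otimes x = 1 \otimes bx = 0$ in $F^n(N) = F^n(R) \otimes_R N$ gives $(\fm^c)^{[q]} \cdot F^n(N) = 0$, hence $\fm^{rcq} F^n(N) = 0$ by pigeonhole, where $r$ is the number of degree one generators of $\fm$. Applying the right exact $F^n$ to $0 \to N \to M \to M' \to 0$ with $M' := M/N$ and letting $K$ be the image of $F^n(N)$ in $F^n(M)$, the left exact sequence of $H^0_\fm$ applied to $0 \to K \to F^n(M) \to F^n(M') \to 0$ combined with the above annihilator shows that a linear bound for $H^0_\fm(F^n(M'))$ upgrades multiplicatively to one for $H^0_\fm(F^n(M))$. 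Thus I may assume $M$ has depth $\geq 1$, so $M$ is Cohen-Macaulay of dimension one.

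By a graded prime filtration of $M$ and iteration of the same short exact sequence analysis (the successive quotients are of the form $(R/\fp_i)(a_i)$ with $\fp_i \in \Supp(M)$, and at each step the torsion pieces produced contribute annihilators of the form $\fm^{r c_i q}$ that accumulate multiplicatively), the problem reduces to the cyclic case $M = R/\fp$ for a graded prime $\fp$. For $\fp = \fm$, the isomorphism $F^n(R/\fm) \cong R/\fm^{[q]}$ of Remark \ref{g} and the pigeonhole inclusion $\fm^{rq} \subseteq \fm^{[q]}$ give the bound with $b = r$. The substantive case is $\dim R/\fp = 1$, so $\fp$ is a minimal prime of $R$. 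I choose a homogeneous $s \in \fm \setminus \fp$ of positive degree (possible by graded prime avoidance since $\fp \subsetneq \fm$ are both graded). Then $R_\fp$ being zero dimensional forces $\fp R_\fp$ to be nilpotent, and by Noetherianity a suitable power of $s$ (folded back into $s$) satisfies $s \fp^N = 0$ in $R$. Combined with the identity $(\fp^{[q]})^N = (\fp^N)^{[q]}$ and the elementary fact that $sa = 0$ for $a \in \fp^N$ implies $s a^q = 0$, this yields the crucial uniform vanishing
\[
s \cdot (\fp^{[q]})^N = 0 \qquad \text{for every } q.
\]
Since $\sqrt{\fp^{[q]} + (s)} = \sqrt{\fp + (s)} = \fm$, local cohomology becomes an $s$-torsion computation:
\[
H^0_\fm(R/\fp^{[q]}) = H^0_{(s)}(R/\fp^{[q]}) = \bigcup_k (\fp^{[q]} : s^k)/\fp^{[q]}.
\]

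The main obstacle is to bound linearly in $q$ the top degree of this finite length graded module; once this is done, since the module sits in non negative degrees, $\fm$ raised to one more than the top degree annihilates it. The key leverage is the uniform annihilation $s (\fp^{[q]})^N = 0$: combined with the Hilbert function of the one dimensional domain $R/\fp$, this forces the ascending chain $(\fp^{[q]} : s^k)_k$ to stabilize by step $O(q)$, yielding a top degree bound of order $O(q)$. Without the uniformity of the single element $s$, a naive ascending chain argument would allow the stabilization index to grow faster than $q$; the fixed $s$ is precisely what keeps the bound linear. This is essentially the content of Vraciu's Proposition 2 of \cite{V2}, which we invoke.
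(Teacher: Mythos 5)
There is a genuine gap, in fact two. First, the core of your argument is circular relative to what is being asked. The paper proves this lemma by nothing more than the citation to \cite[Proposition 2]{V2}; your proposal, after its reductions, reaches the one step that carries all the content --- that the chain $(\fp^{[q]}:s^k)_k$ stabilizes within $O(q)$ steps, equivalently that $\engrad\left(H^0_{\fm}(R/\fp^{[q]})\right)$ is bounded by a linear function of $q$ --- and there it only gestures at ``the Hilbert function of the one-dimensional domain $R/\fp$'' before declaring the claim to be ``essentially the content of Vraciu's Proposition 2, which we invoke.'' That proposition \emph{is} the lemma, so nothing has been proved: the uniform linearity in $q$ is exactly the point, and no argument for it is given. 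The surrounding preparation ($s\fp^N=0$, hence $s\,(\fp^{[q]})^N=0$ for all $q$, and $H^0_{\fm}(R/\fp^{[q]})=H^0_{(s)}(R/\fp^{[q]})$) is correct but is the easy part.

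Second, the reduction to the cyclic case $M=R/\fp$ via a graded prime filtration does not work as written. Your d\'evissage for $N:=H^0_{\fm}(M)$ succeeds precisely because $N$ is killed by a fixed power $\fm^c$, so $F^n(N)$ is killed by $(\fm^c)^{[q]}$. In the filtration step the submodule $M_{j-1}\subset M_j$ is not $\fm$-torsion; applying the right-exact $F^n$ to $0\to M_{j-1}\to M_j\to (R/\fp_j)(a_j)\to 0$ gives $F^n(M_{j-1})\to F^n(M_j)\to F^n(R/\fp_j)(qa_j)\to 0$, and the relevant module $K_j$ is the image of the first map, i.e.\ a quotient of $F^n(M_{j-1})$ by a kernel which need not vanish and need not be $\fm$-torsion (Frobenius is not flat at the minimal primes of a singular one-dimensional ring, so the kernel can survive localization at such primes). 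An annihilator bound for $H^0_{\fm}(F^n(M_{j-1}))$ gives no bound for $H^0_{\fm}(K_j)$: $\fm$-torsion of a quotient is not controlled by $\fm$-torsion of the module (already $H^0_{\fm}(R)=0$ while quotients of $R$ can have large torsion). So the phrase ``torsion pieces produced \ldots accumulate multiplicatively'' is unsupported, and the reduction to $R/\fp$ is not established. A minor further point: the opening reduction to finitely generated $M$ ``since $F^n$ and $H^0_{\fm}$ commute with filtered colimits'' is invalid, because the exponent $b$ need not be uniform over the system (and the statement is false for non-finitely generated $M$); the lemma should simply be read with $M$ finitely generated, as in \cite{V2} and as it is used in the paper.
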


\begin{lemma}\label{lc}
Suppose one of the following holds:
 \begin{enumerate}
\item[i)] The ring $R$ is normal, 2-dimensional and $I$ is an ideal,
\item[ii)] The ring $R$  is finitely generated, positively graded algebra over  a field $k$, and  $I$  a homogeneous ideal.
Suppose $\Ht(I)=\dim R-1$.
\end{enumerate}
Then $\fm^{bq}H^0_{\fm}(R/I^{[q]})=0 $ for some $b$ that does not depending  to $q$ where $\fm$ is the maximal ideal.
\end{lemma}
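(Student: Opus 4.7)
The plan is to treat the two items separately, in each case reducing to a case of the (LC) conjecture already recorded in the introduction.

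For (i), where $R$ is normal and $2$-dimensional, there is nothing genuinely new to prove: this is precisely the main theorem of Vraciu \cite{V2}, and the plan is simply to cite it (this case of the $(LC)$ conjecture is also quoted explicitly in the introduction).

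For (ii), my strategy is to reduce to the previously settled case of a graded ideal with $\dim R/I=1$. The key observation is that the hypothesis $\Ht(I)=\dim R-1$ forces every minimal prime $\fp$ of $I$ to satisfy $\dim R/\fp=1$. Indeed, $\Ht(\fp)\ge \Ht(I)=\dim R-1$, and in a finitely generated positively graded $k$-algebra (which is universally catenary) the altitude inequality $\Ht(\fp)+\dim R/\fp\le\dim R$ then forces $\dim R/\fp\le 1$; conversely $\fp\neq\fm$, for otherwise $\Ht(I)=\dim R$, contradicting the hypothesis once $\dim R\ge 1$, and a proper graded prime in a positively graded $k$-algebra automatically has dimension at least $1$. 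Since $\sqrt{I^{[q]}}=\sqrt{I}$ in prime characteristic, the same conclusion applies to every Frobenius power, giving $\dim R/I^{[q]}=1$ for all $q$. I can then invoke the known (LC) property for graded ideals with $\dim R/I=1$, proved by both Vraciu \cite{V2} and Huneke \cite{Hun}, to extract a uniform exponent $b$ with $\fm^{bq}H^0_{\fm}(R/I^{[q]})=0$.

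The only real obstacle is the dimension reduction in (ii); this is a routine consequence of standard dimension theory for graded $k$-algebras, so I do not anticipate any serious difficulty — the substantive work is done in the references. Note that Lemma \ref{11}, which handles the truly zero-dimensional situation $\dim R\le 1$, is not invoked here but will be the workhorse for the proof of Theorem \ref{lq}, where it has to be combined with the present lemma after a suitable localization or hyperplane-section reduction.
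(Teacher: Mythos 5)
Your proposal is correct and matches the paper's treatment: the paper's entire proof is a citation, namely \cite[Proposition 1]{V2} for case (i) and \cite[Theorem 1]{V2} together with \cite{Hun} for case (ii). Your additional dimension-theoretic reduction from $\Ht(I)=\dim R-1$ to $\dim R/I^{[q]}=1$ (via $\Ht(\fp)+\dim R/\fp\leq\dim R$ and the fact that a non-maximal graded prime has coheight at least one) is sound and merely makes explicit a step the paper leaves implicit.
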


\begin{proof}
 The former is in \cite[Proposition 1]{V2} and the later is in \cite[Theorem 1]{V2} and \cite{Hun}.
\end{proof}
Let $R=\bigoplus_{n\geq0} R_n$ be a standard graded  ring, $\fm:=\bigoplus_{n>0} R_n$ the irrelevant ideal and $L=\bigoplus_{n\in \mathbb{Z}} L_n$  a  graded $R$-module. Set
$\engrad(L):=\Supp\{n:L_n\neq 0\}.$
Clearly,
$\engrad\left(\bigoplus_{i\in I} L(-\ell_i)\right)=\max\{\ell_i\}+\engrad(L)$ and  $\engrad(L_1\oplus L_2)=\max\{\engrad(L_1),\engrad(L_2)\}$.
Let $0\neq M$ be a finitely generated  and graded $R$-module. Then
$H_{\frak m}^{i}(M)$ is $\mathbb{Z}$-graded.  The well-known point is that
 $\engrad( H^i_{\fm}(M))<\infty.$
Let  $\ell\in \mathbb{Z}$. Recall that by $L_{\geq \ell}$ we mean $\bigoplus_{i\geq \ell} L_i$.

\begin{lemma}\label{mainlemma}
Let $R$ be a standard graded ring over a field of characteristic $p > 0$ and $M$ be
finitely generated and graded. If $\engrad\left(H^0_{\fm}(F^n(M))\right)\leq aq+c$
for some $a$  and $c$  not depending  to $q$, then    $M$ satisfies in the (LC) property.
\end{lemma}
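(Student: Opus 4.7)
The plan is to translate the degree bound on the end of $H^0_{\fm}(F^n(M))$ directly into the required linear bound on the annihilating power of $\fm$, via the standard fact that for graded $\fm$-torsion modules the two invariants coincide.

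First I would observe that $F^n(M)$ is a finitely generated graded $R$-module, so its $\fm$-torsion submodule $H^0_{\fm}(F^n(M))$ is finitely generated, graded, and $\fm$-power-torsion, hence of finite length; in particular it is annihilated by some power of $\fm$. Since $R$ is standard graded over the field $R_0$, the ideal $\fm$ is generated by $R_1$, so $\fm^k$ consists of elements of degrees $\geq k$ and acts by raising degree by at least $k$. Consequently, for any finitely generated graded $\fm$-torsion $R$-module $N$, one has the sharp identity
\[
\min\{k\in\mathbb{N}_0 : \fm^k N = 0\} \;=\; \engrad(N)-\indeg(N)+1.
\]

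Next I would apply this to $N:=H^0_{\fm}(F^n(M))$. The hypothesis gives $\engrad(N)\leq aq+c$. For the other side, since $N\hookrightarrow F^n(M)$ one has $\indeg(N)\geq \indeg(F^n(M))$, and in view of how the grading on $F^n(M)=F^n(R)\otimes_R M$ is inherited from that of $M$ (the identification $\varphi$ of Remark~\ref{g}, together with the conventions for $(\sim)_{\succcurlyeq \ell}$ vs.\ $(\sim)_{\geq \ell}$, yields a bound of the shape $\indeg(F^n(M))\geq C_0+C_1 q$ with $C_0,C_1$ depending only on $\indeg(M)$). Substituting into the displayed identity gives
\[
\fm^{(a-C_1)q+(c-C_0+1)}\,H^0_{\fm}(F^n(M))\;=\;0.
\]
Choosing any $b\geq a-C_1+c-C_0+1$ makes the exponent $\leq bq$ for every $q\geq 1$, and this choice is independent of $q$ and $n$. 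Therefore $\fm^{bq}H^0_{\fm}(F^n(M))=0$ for all $q$; specialising to $M=R/I$ and invoking the isomorphism $\varphi\colon F^n(R/I)\simeq R/I^{[q]}$ of Remark~\ref{g} converts this into the (LC) property for $I$.

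The only non-trivial point is the bookkeeping in step two: making sure that the lower bound on $\indeg(F^n(M))$ with respect to whichever grading convention is chosen (natural on $F^n(M)$ or the new grading on $R/\fa^{[q]}$ of Remark~\ref{g}) is at worst linear in $q$ with controlled coefficients, so that the exponent $\engrad(N)-\indeg(N)+1$ stays $\leq bq$ for a fixed $b$. Once this linear-in-$q$ control of both ends is in place, the conclusion is immediate from the length-of-graded-torsion-module formula.
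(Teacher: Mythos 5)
Your proposal is correct and takes essentially the same route as the paper's own proof: trap $H^0_{\fm}(F^n(M))$ between a beginning degree that is at least linear in $q$ (the paper obtains $\indeg(F^n(M))\geq \ell_0q$ from the graded presentation of $F^n(M)$, whose shifts are the $q$-multiples of those of $M$ --- exactly the mechanism you invoke) and the end $\leq aq+c$, then use that $\fm$ is generated in degree one so $\fm^{bq}$ pushes past the end. The only caveat is that your ``sharp identity'' $\min\{k:\fm^kN=0\}=\engrad(N)-\indeg(N)+1$ holds in general only as the inequality $\leq$ (e.g.\ $N=k\oplus k(-5)$ with trivial $\fm$-action), but that is the only direction your argument uses, so nothing breaks.
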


\begin{proof}
Write $c=dq+e$ where $0\leq e\leq q-1$ and $d\in \mathbb{N}_0$. Set $f:=a+(d+1)$. Then $$\engrad\left(H^0_{\fm}(F^n(M))\right)< fq \  \ (\star)$$
for some $f$ independent of $q$.  We have  $\fm \left(H^0_{\fm}(F^n(M))\right)_i\subseteq H^0_{\fm}(F^n(M))_{\geq i+1}.$ As $M$ is finitely generated and $R$ is positively graded, there is $\ell_0$ such that $M_i=0$ for all $i<\ell_0$.
If $$\begin{CD}
\bigoplus_i R (-\beta_{1i})@>(a_{ij})>> \bigoplus_i R (-\beta_{0i}) @>>> M  @>>> 0.
\\
\end{CD}$$ is the graded presenting  sequence of $M$, then
 $$\begin{CD}
\bigoplus_i R (-q\beta_{1i})@>(a_{ij}^q)>> \bigoplus_i R (-\beta_{0i}q) @>>> F^n(M)  @>>> 0
\\
\end{CD}
$$  is the graded presenting sequence of $F^n(M)$, because the tensor product is right exact and $F^n(a_{ij})=(a_{ij}^q)$. This yields that   $H^0_{\fm}(F^n(M))\subseteq F^n(M)$ concentrated
 in degrees greater than $\ell_0 q$.  Set $b:=f+\mid\ell_0\mid$. Therefore $$\fm^{bq}H^0_{\fm}(F^n(M))\subseteq H^0_{\fm}(F^n(M))_{\geq fq}\stackrel{(\star)}=0$$
for some $b$ that does not depending  to $q$.
\end{proof}

Set
$b_0(L) := \inf\{\ell :\bigoplus_i L_{i\leq \ell}R = L\}$.
We will utilize the following result.

\begin{lemma}\label{kus}(\cite[Lemma 3.2]{kus})
Let $R$ be a standard graded  ring over a field of characteristic $p > 0$ and $M$ a
finitely generated graded $R$-module. Let $$\begin{CD}
\cdots @>>> C_d @>>> \cdots  @>\pi>>   C_0 @>>>0  @.
\\
\end{CD}$$ be a graded complex  with $\coker(\pi)=M$.  Suppose the following assertions hold:
  \begin{enumerate}
 \item[(1)]  $\depth C_j\geq i+j+1$ for all $0\leq j\leq \dim R-i-1$, and
\item[(2)]  $\dim H_j(C_{\bullet})\leq j+i$ for all $j\geq 1$.
\end{enumerate}
Then $\engrad(H^i_{\fm}(M)) \leq b_0(C_{\dim R-i}) +\engrad(H^{\dim R}_{\fm}(R)).$
\end{lemma}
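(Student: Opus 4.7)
The plan is to run the two hyperlocal cohomology spectral sequences associated to the complex $C_\bullet$, using one to show $H^i_\fm(M)$ is a subquotient of $\mathbb{H}^i_\fm(C_\bullet)$ and the other to pin $\mathbb{H}^i_\fm(C_\bullet)$ inside $H^{\dim R}_\fm(C_{\dim R - i})$. Concretely, apply the $\check{C}$ech complex on a generating set of $\fm$ termwise to $C_\bullet$; the totalization is a graded complex whose cohomology is $\mathbb{H}^\bullet_\fm(C_\bullet)$, with all maps preserving the internal grading. After switching to cohomological indexing $C^{-j} = C_j$ (so that $C^\bullet$ is bounded above by degree $0$), both spectral sequences converge.

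The first spectral sequence has $E_2^{p,q} = H^p_\fm(H^q(C^\bullet))$ and corner term $E_2^{i,0} = H^i_\fm(M)$. A bookkeeping of the bidegrees of the differentials shows that hypothesis (2), via Grothendieck vanishing, annihilates every outgoing differential from $(i,0)$, since for $r \geq 2$ the target is $H^{i+r}_\fm(H_{r-1}(C_\bullet))$ with $\dim H_{r-1}(C_\bullet) \leq i+r-1 < i+r$; the incoming differentials vanish because $H^{r-1}(C^\bullet) = 0$ for $r \geq 2$. Hence $H^i_\fm(M)$ persists to $E_\infty$ and realizes as a graded subquotient of $\mathbb{H}^i_\fm(C_\bullet)$.

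In the second spectral sequence $E_1^{p,q} = H^q_\fm(C^p)$, the antidiagonal $p + q = i$ consists of the entries $H^{i+j}_\fm(C_j)$ for $j \geq 0$. Hypothesis (1) kills these for $0 \leq j \leq \dim R - i - 1$, and Grothendieck vanishing kills those for $j > \dim R - i$, leaving only $H^{\dim R}_\fm(C_{\dim R - i})$ on that diagonal. Consequently $\mathbb{H}^i_\fm(C_\bullet)$ is itself a graded subquotient of $H^{\dim R}_\fm(C_{\dim R - i})$.

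It remains to estimate the $\engrad$ of this top local cohomology. By the definition of $b_0$, there is a surjection $\bigoplus_k R(-\alpha_k) \twoheadrightarrow C_{\dim R - i}$ with $\alpha_k \leq b_0(C_{\dim R - i})$; since $H^{\dim R}_\fm$ is right exact (being the top nonvanishing local cohomology), this yields a graded surjection whose source has $\engrad$ at most $b_0(C_{\dim R - i}) + \engrad(H^{\dim R}_\fm(R))$. Chaining the three subquotient inequalities gives the stated bound. The main obstacle is the bookkeeping: hypotheses (1) and (2) are tuned precisely so that the corner of the first spectral sequence persists and the antidiagonal of the second collapses to a single entry, and the internal grading must be tracked faithfully through every identification so that the final $\engrad$ estimate is not weakened.
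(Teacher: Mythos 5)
Your proof is correct, but note that the paper does not actually prove this lemma: it is quoted verbatim from \cite[Lemma 3.2]{kus}, so there is no internal argument to compare with. Your two-spectral-sequence proof is a legitimate self-contained route. The bookkeeping checks out: with $C^{-j}:=C_j$ the double complex obtained by applying the \v{C}ech complex on a generating set of $\fm$ termwise is concentrated in a horizontal strip of finite height, so each total degree involves only finitely many terms and both filtrations are finite degreewise, which settles convergence even when $C_{\bullet}$ is unbounded; in the spectral sequence with $E_2^{p,q}=H^p_{\fm}(H^q(C^{\bullet}))$ the differentials leaving $(i,0)$ land in $H^{i+r}_{\fm}(H_{r-1}(C_{\bullet}))$, which vanish by hypothesis (2) and Grothendieck vanishing, while the incoming ones vanish because the complex lives in non-positive cohomological degrees, so $H^i_{\fm}(M)=E_{\infty}^{i,0}$ is a graded subquotient of $\mathbb{H}^i$; in the other spectral sequence hypothesis (1) (depth-sensitivity of local cohomology) and Grothendieck vanishing wipe out every entry of the total-degree-$i$ antidiagonal except $H^{\dim R}_{\fm}(C_{\dim R-i})$; and the final estimate uses that $H^{\dim R+1}_{\fm}$ vanishes on all modules, so $H^{\dim R}_{\fm}$ is right exact and, applied to a surjection $\bigoplus_k R(-\alpha_k)\twoheadrightarrow C_{\dim R-i}$ with $\alpha_k\leq b_0(C_{\dim R-i})$, gives $\engrad(H^{\dim R}_{\fm}(C_{\dim R-i}))\leq b_0(C_{\dim R-i})+\engrad(H^{\dim R}_{\fm}(R))$. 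Since passing to graded subquotients can only decrease $\engrad$, chaining the three inequalities gives the claim (implicitly you use, as the source does, that the terms $C_j$ are finitely generated so that $\depth$ and $b_0$ make sense). It is worth observing that the hand-rolled argument the paper performs inside the proof of Theorem \ref{lq}(ii) --- splitting the complex into short exact sequences, identifying $H^{d-3}_{\fm}$ of the cokernel with $H^{d-2}_{\fm}$ of an image, and descending step by step while discarding $\Tor$ terms of small dimension --- is exactly the collapse of your second spectral sequence carried out by explicit exact-sequence chases, so your packaging is the more economical version of the same mechanism, at the cost of invoking hypercohomology machinery rather than elementary long exact sequences.
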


\begin{corollary}\label{pdim}
Let $R$ be  graded Cohen-Macaulay and $I$ a homogeneous ideal of finite projective dimension. Then $\fm^{bq}H^0_{\fm}(R/I^{[q]})=0$ for some $b$ that does not depending  to $q$.
 \end{corollary}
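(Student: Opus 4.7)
The plan is to reduce the statement to Lemma \ref{mainlemma} applied to $M:=R/I$, so it suffices to produce a bound $\engrad(H^0_\fm(R/I^{[q]}))\le aq+c$ with $a,c$ independent of $q$. Such a bound will come out of Lemma \ref{kus} once a suitable complex resolving $R/I^{[q]}$ is in hand.

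First I would fix a minimal graded free resolution $F_\bullet : 0\to F_d\to\cdots\to F_0\to R/I\to 0$ with $F_j=\bigoplus_i R(-\beta_{ji})$; since $R$ is Cohen-Macaulay, the Auslander-Buchsbaum formula gives $d=\pd(R/I)\le\dim R$. Applying the Peskine-Szpiro functor produces a graded complex $C_\bullet := F^n(F_\bullet)$ with $C_j=\bigoplus_i R(-\beta_{ji}q)$ and $\coker(C_1\to C_0)=F^n(R/I)=R/I^{[q]}$. The key input is the Peskine-Szpiro rigidity theorem: finite projective dimension of $R/I$ forces $\Tor^R_k(F^n(R),R/I)=0$ for $k\ge 1$, so $C_\bullet$ is in fact a free resolution of $R/I^{[q]}$.

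Next I would verify the hypotheses of Lemma \ref{kus} with $i=0$ and $M=R/I^{[q]}$. Condition (1) asks $\depth C_j\ge j+1$ for $0\le j\le \dim R-1$; this is immediate since each $C_j$ is graded free and $R$ is Cohen-Macaulay, so $\depth C_j=\dim R$. Condition (2) asks $\dim H_j(C_\bullet)\le j$ for $j\ge 1$, which holds vacuously because $C_\bullet$ was arranged to be acyclic. The conclusion of Lemma \ref{kus} then gives
$$\engrad\bigl(H^0_\fm(R/I^{[q]})\bigr)\le b_0(C_{\dim R})+\engrad\bigl(H^{\dim R}_\fm(R)\bigr).$$
The quantity $b_0(C_{\dim R})$ equals $q\cdot\max_i\beta_{(\dim R)i}$, linear in $q$ with coefficient determined once and for all by the resolution of $R/I$, while $\engrad(H^{\dim R}_\fm(R))$ is the $a$-invariant of $R$, a fixed integer. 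This is exactly the linear bound required to invoke Lemma \ref{mainlemma}, which then delivers the desired $b$.

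A small tidying remark: if $\pd(R/I)<\dim R$ then $C_{\dim R}=0$, but in that case Auslander-Buchsbaum yields $\depth(R/I)>0$, and since Frobenius preserves the resolution length the same holds for $R/I^{[q]}$, forcing $H^0_\fm(R/I^{[q]})=0$ and making the corollary trivial. The only real conceptual obstacle is the appeal to Peskine-Szpiro guaranteeing that $C_\bullet$ is acyclic; this is where the finite projective dimension hypothesis is essentially used. The Cohen-Macaulay assumption on $R$ enters only through the depth computation in condition (1) of Lemma \ref{kus} and through Auslander-Buchsbaum bounding $\pd(R/I)$ by $\dim R$.
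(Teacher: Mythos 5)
Your proof is correct and follows essentially the same route as the paper's: the paper also applies the Peskine--Szpiro acyclicity theorem to conclude that $F^n(F_{\bullet})$ is the (minimal) free resolution of $R/I^{[q]}$, checks conditions (1) and (2) of Lemma \ref{kus} via Cohen--Macaulayness and acyclicity, and then invokes Lemma \ref{mainlemma}. Your extra details (the explicit linear form of $b_0(C_{\dim R})$ and the separate treatment of the case $\pd(R/I)<\dim R$, where $H^0_{\fm}(R/I^{[q]})=0$) are sound refinements of the same argument.
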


\begin{proof} Denote the minimal free resolution of $R/I$ by $F_{\bullet}$. In view of \cite[Theorem 1.13]{PS1}, $F^n(F_{\bullet})$ is the minimal free resolution of
$R/I^{[q]}$, i.e.,  Lemma \ref{kus}(2) satisfied. The Cohen-Macaulay property  implies Lemma \ref{kus}(1). So, we are in the situation
to apply Lemma \ref{kus}. That is  $\engrad\left(H^0_{\fm}(R/I^{[q]})\right)\leq aq+c$
for some $a$  and $c$  not depending  to $q$. Now Lemma \ref{mainlemma} presents the desired claim.
\end{proof}

\begin{remark}\label{qu}
Let $R$ be graded (not necessarily standard) with only quotient singularity. Then for any graded ideal $I$, one has
$\fm^{bq}H^0_{\fm}(R/I^{[p^n]})=0$ for some $b$ that does not depending  to $q$.
\end{remark}

Let us recall the concept of quotient singularity. This means that there is a finite extension $R\subset S$ of graded rings such that $R = S^G:=\{s\in S:g(s)=s \quad \forall g\in G\},$ where
$S$ is a standard graded regular ring and $G$ a finite group of order prime to the characteristic. The
trace map divided by the group order shows that $R$ is a direct summand of
$S$.
It may be worth to recover Remark \ref{qu} by a more general technic:

\begin{discussion}\label{ffrt}
(i) Recall from \cite{ffrt} that $R$ has finite  $F$-representation type
if there are
finitely generated $\mathbb{Q}$-graded
$R$-modules $M_1\ldots, M_s $ such that for any $q$, there exist nonnegative integers $m_{qi}$ and rational numbers $\ell_{i,j}^{m_{qi}}$ such that $F^n(R)\simeq \bigoplus_{1\leq i\leq s} \bigoplus_{1\leq j\leq m_{qi}}M_i(\ell_{i,j}^{m_{qi}}).$ This is well-known
that the sequence $\{\ell_{i,j}^{m_{qi}}\}$ is bounded from below.

(ii) The following  are  of finite  $F$-representation type: 1) $R_1:=\mathbb{F}_p[X_1,\ldots, X_n]$,  2) $R_2:=R_1/I$ where $I$ is  a monomial ideal,
3) direct-summand of $R_2$.

(iii) Let $R$ be a standard graded  ring and of finite $F$-representation type over an $F$-finite field  $R_0$ and let  $I\lhd R$ be homogeneous.
Then $\fm^{bq}H^0_{\fm}(R/I^{[q]})=0$ for some $b$ that does not depending  to $q$.
Indeed, we do with the similar lines throughout \cite[Theorem 7.6]{kat1}:
$$H^0_{\fm}(F^n(R/I))=H^0_{\fm}(F^n(R)\otimes R/I)= \bigoplus H^0_{\fm}(M_j/IM_j)(\ell_{i,j}^{m_{qi}})$$
Having $c:=\inf_{i,j,m_{qi}}\{\ell_{i,j}^{m_{qi}}\}$ from part (i), $$\engrad
\left(H^0_{\fm}(F^n(R/I))\right)\leq\max_{1\leq j\leq s}\{\engrad (H^0_{\fm}(\frac{M_j}{IM_j}))\}-c<\infty.$$
 Set $b:=\max_{1\leq j\leq s}\{\engrad (H^0_{\fm}(\frac{M_j}{IM_j}))\}-c<\infty$.
In view of Remark \ref{g},  $H^0_{\fm}(R/I^{[q]})_{\geq bq}=0$ and deduce from
 Lemma \ref{mainlemma} that $\fm^{bq} H^0_{\fm}(R/I^{[q]})=0$.
\end{discussion}

\begin{notation}
By $\mathcal{R}(n)$ and $\mathcal{S}(n)$ we denote the Serre's conditions. The property $\mathcal{R}(n)$ defined as follows: If $\fp \in \Spec(R)$ and $\Ht(p)\leq n$,  then $R_{\fp}$ is regular, and $\mathcal{S}(n)$ defines by $\depth(R_{\fp})\geq \min\{n,\Ht(\fp)\}$.
\end{notation}

The following is a higher dimensional version of \cite[Lemma 3.7]{AB}.

\begin{proposition}\label{lemab}
Let $R$ be a  graded domain of dimension $d>1$ satisfies in $\mathcal{S}(2)$, $J\lhd R$  and $f\in R$ be  homogeneous. Then
$f_{gHK}^{R/J}=f_{gHK}^{R/fJ}$. In particular, $e_{gHK}(R/J)=e_{gHK}(R/fJ)$.
\end{proposition}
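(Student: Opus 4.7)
The plan is to compare $R/J^{[q]}$ and $R/(fJ)^{[q]}$ via a short exact sequence induced by multiplication by $f^q$, and then to show the cokernel contributes nothing at the level of $H^0_{\fm}$.

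First, I would note the identity $(fJ)^{[q]} = f^q J^{[q]}$, which is immediate from the definition of the Frobenius power together with the fact that $R$ is a domain (so in particular $f$ is a nonzerodivisor). Next, since $R$ is a domain, multiplication by $f^q$ gives a graded injection
\[
R/J^{[q]} \xrightarrow{\;\cdot f^q\;} R/f^q J^{[q]},\qquad r+J^{[q]}\mapsto f^q r+f^q J^{[q]},
\]
with image $f^q R/f^q J^{[q]}$, so the cokernel is $R/f^q R$. This produces the graded short exact sequence
\[
0\longrightarrow (R/J^{[q]})(-q\deg f)\stackrel{\cdot f^q}\longrightarrow R/f^q J^{[q]}\longrightarrow R/f^q R\longrightarrow 0.
\]

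Applying the $\fm$-torsion functor, the long exact sequence begins with
\[
0\to H^0_{\fm}(R/J^{[q]})(-q\deg f)\to H^0_{\fm}(R/f^q J^{[q]})\to H^0_{\fm}(R/f^q R)\to\cdots.
\]
The key step is to show $H^0_{\fm}(R/f^q R)=0$. Since $R$ is a graded domain satisfying $\mathcal{S}(2)$ and $\dim R=d\geq 2$, we have $\depth R_{\fm}\geq \min\{2,d\}=2$. As $f$ is a nonzerodivisor on $R$ (being a nonzero element of a domain), so is $f^q$, hence $\depth(R/f^q R)_{\fm}\geq 1$. In the graded local setting this translates to $H^0_{\fm}(R/f^q R)=0$, which is the main (and only nontrivial) obstacle.

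Combining these observations, multiplication by $f^q$ induces an isomorphism of graded modules $H^0_{\fm}(R/J^{[q]})(-q\deg f)\xrightarrow{\sim} H^0_{\fm}(R/f^q J^{[q]})$. Since length is invariant under graded shift, we obtain
\[
f_{gHK}^{R/J}(n)=\ell\bigl(H^0_{\fm}(R/J^{[q]})\bigr)=\ell\bigl(H^0_{\fm}(R/(fJ)^{[q]})\bigr)=f_{gHK}^{R/fJ}(n)
\]
for every $n$, and dividing by $q^{\dim R}$ and letting $n\to\infty$ yields the equality of generalized Hilbert--Kunz multiplicities.
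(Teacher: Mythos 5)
Your argument is correct, and it takes a mildly different route from the paper's. The paper proves the statement by two applications of the isomorphism $H^0_{\fm}(R/I^{[q]})\simeq H^1_{\fm}(I^{[q]})$ (coming from $0\to I^{[q]}\to R\to R/I^{[q]}\to 0$ together with $H^0_{\fm}(R)=H^1_{\fm}(R)=0$, which is where $\mathcal{S}(2)$ enters), and then identifies $J^{[q]}\simeq (fJ)^{[q]}$ as (graded, up to shift) modules via multiplication by $f^{q}$, chaining the three isomorphisms. You instead package the multiplication-by-$f^q$ map at the level of quotients, obtaining the short exact sequence $0\to (R/J^{[q]})(-q\deg f)\to R/f^qJ^{[q]}\to R/f^qR\to 0$, and kill the cokernel by observing that $f^q$ is a nonzerodivisor and $\depth R_{\fm}\geq 2$ forces $H^0_{\fm}(R/f^qR)=0$; so you never pass through $H^1_{\fm}$ of the ideals. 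The two proofs use exactly the same two ingredients ($R$ a domain, hence $f^q$ regular, and depth at least two at $\fm$ from $\mathcal{S}(2)$), so neither is more general, but yours stays entirely within $H^0_{\fm}$ of cyclic quotients and records the graded shift explicitly, which makes the degreewise comparison $H^0_{\fm}(R/(fJ)^{[q]})_m\simeq H^0_{\fm}(R/J^{[q]})_{m-q\deg f}$ visible, while the paper's version is a touch shorter since it only quotes the already-established identification with $H^1_{\fm}$. One shared caveat: both arguments (and the statement as written) tacitly assume $f\neq 0$, since for $f=0$ one has $(fJ)^{[q]}=0$ and $H^0_{\fm}(R)=0$ while $H^0_{\fm}(R/J^{[q]})$ need not vanish; you may want to note that the degenerate case is excluded, and that if $f$ is a unit the claim is trivial so one may take $f\in\fm$ when invoking the depth drop.
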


\begin{proof}As $R$ is $\mathcal{S}(2)$, one has $\depth(R_{\fm})\geq 2$ and by
\cite[Proposition 1.5.15(e)]{BH},
$\grade(\fm, R)\geq 2.$
Look at $0\to J^{[p^n]} \to R \to R/J^{[p^n]}\to 0$. It induces
$$0\simeq H^0_{\fm}( R)\to H^0_{\fm}( R/J^{[p^n]})\to H^1_{\fm}(   J^{[p^n]}) \to H^1_{\fm}( R)\simeq0.$$So, $H^0_{\fm}( R/J^{[p^n]})\simeq H^1_{\fm}(   J^{[p^n]}) \quad(\ast)$. Similarly,
$H^0_{\fm}\left(\frac{ R}{(fJ)^{[p^n]}}\right)\simeq H^1_{\fm}\left((fJ)^{[p^n]}\right)$. As $R$ is an integral domain,  multiplication by $f^{p^n}$  gives
$J^{[p^n]}\simeq(fJ)^{[p^n]}.$ Therefore, $$H^0_{\fm}( R/J^{[p^n]})\simeq H^1_{\fm}\left(J^{[p^n]}\right)\simeq H^1_{\fm}((fJ)^{[p^n]})\simeq H^0_{\fm}( R/(fJ)^{[p^n]}).$$
This is the desired claim.
\end{proof}

\begin{lemma}\label{s2}
Let $R$ be a graded $\mathcal{S}(2)$  ring over a field of characteristic $p > 0$ and $I\lhd R$  be graded.  Then $H^0_{\fm}(R/I^{[q]})\simeq H^1_{\fm}(I^{[q]})$ as graded modules.
\end{lemma}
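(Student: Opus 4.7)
The plan is to imitate the opening paragraph of the proof of Proposition \ref{lemab}, because the domain hypothesis there was only invoked afterwards to replace $J^{[q]}$ by $(fJ)^{[q]}$ via multiplication by $f^q$, and that step is absent here. First I would observe that the $\mathcal{S}(2)$ condition at the (graded) maximal ideal $\fm$, combined with the assumption $\dim R\geq 2$ (which is tacit in the statement, since otherwise $H^1_{\fm}(I^{[q]})$ does not carry the expected vanishing), yields $\depth(R_{\fm})\geq 2$. Translating this back to the graded setting via \cite[Proposition 1.5.15(e)]{BH}, exactly as in the proof of Proposition \ref{lemab}, gives $\grade(\fm,R)\geq 2$, which by definition of grade through $\check{C}$ech cohomology means
\[
H^0_{\fm}(R)=H^1_{\fm}(R)=0.
\]

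Next I would feed this vanishing into the long exact sequence of local cohomology arising from the graded short exact sequence $0\to I^{[q]}\to R\to R/I^{[q]}\to 0$:
\[
H^0_{\fm}(R)\to H^0_{\fm}(R/I^{[q]})\xrightarrow{\delta} H^1_{\fm}(I^{[q]})\to H^1_{\fm}(R).
\]
Since the outer two terms vanish, the connecting homomorphism $\delta$ is an isomorphism of graded $R$-modules, which is precisely the claim. Because every map in the long exact sequence is degree preserving (the short exact sequence is graded and the $\check{C}$ech complex computing local cohomology is graded), the isomorphism is graded.

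I do not anticipate a serious obstacle: the argument is essentially a one-line extraction from the proof of Proposition \ref{lemab}, and the only point one has to keep an eye on is the standard passage from the $\mathcal{S}(2)$ property (a condition on all localizations $R_{\fp}$) to the inequality $\grade(\fm,R)\geq 2$ for the irrelevant ideal; this was already implicitly used in the preceding proof, so reusing the same reference \cite[Proposition 1.5.15(e)]{BH} is enough. Hence the proof reduces to one short paragraph.
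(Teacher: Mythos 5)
Your argument is exactly the paper's: Lemma \ref{s2} is proved there simply by pointing to the displayed isomorphism $(\ast)$ inside the proof of Proposition \ref{lemab}, which is obtained precisely as you do—$\mathcal{S}(2)$ gives $\grade(\fm,R)\geq 2$, hence $H^0_{\fm}(R)=H^1_{\fm}(R)=0$, and the graded long exact sequence for $0\to I^{[q]}\to R\to R/I^{[q]}\to 0$ makes the connecting map a graded isomorphism. So your proposal is correct and follows essentially the same route as the paper.
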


 \begin{proof}
This is in  the Proposition \ref{lemab}$(\ast)$.
\end{proof}

 Serre's criterion of normality    over Noetherian rings says that
 a  ring is normal if and only if
it has the properties $\mathcal{R}(1)$ and $\mathcal{S}(2)$.

\begin{lemma}\label{dim}
Let $R$ be a  normal  standard graded  over a field of characteristic $p > 0$ and $I\lhd R$ be
 graded. Let $C_{\bullet}$ be deleted graded free resolution of $R/I$ and let $k>0$.
Then $\dim\left(H_k(F^n(C_{\bullet}))\right)\leq \dim R-2$.
\end{lemma}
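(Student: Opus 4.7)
The plan is to interpret the homology modules $H_k(F^n(C_\bullet))$ as Frobenius Tor modules and to show that they vanish on the ``regular locus'' coming from normality. Since $C_\bullet$ is a deleted free resolution of $R/I$, the complex $F^n(C_\bullet)$ is obtained by tensoring $C_\bullet$ with $F^n(R)$ (viewed as a right $R$-module through the $n$-th iterate of Frobenius). Hence, for every $k \geq 1$,
\[
H_k(F^n(C_\bullet)) \;\cong\; \Tor_k^R(F^n(R),\, R/I).
\]
I will bound the support of this Tor module and convert that into a dimension bound.

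First, I would fix a prime $\fp \in \Spec(R)$ with $\Ht(\fp) \leq 1$. Since $R$ is normal, Serre's criterion ($\mathcal{R}(1)+\mathcal{S}(2)$) gives that $R_\fp$ is a regular local ring. By Fact \ref{kunz} (Kunz's theorem) the Frobenius endomorphism of a regular ring is flat, so $F^n(R_\fp)$ is a flat $R_\fp$-module. Second, localization commutes with the Peskine--Szpiro functor (Fact \ref{locali}) and with Tor, so
\[
\Tor_k^R(F^n(R), R/I)_\fp \;\cong\; \Tor_k^{R_\fp}\bigl(F^n(R_\fp),\, (R/I)_\fp\bigr) \;=\; 0
\]
for all $k \geq 1$, the vanishing being forced by flatness of $F^n(R_\fp)$.

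It follows that the support of $H_k(F^n(C_\bullet))$, for each $k \geq 1$, is contained in the set of primes of height at least $2$:
\[
\Supp\bigl(H_k(F^n(C_\bullet))\bigr) \;\subseteq\; \{\fp \in \Spec(R)\,:\,\Ht(\fp)\geq 2\}.
\]
Since $R$ is a standard graded ring (in particular catenary and equidimensional on each irreducible component), primes of height at least $2$ correspond to closed subschemes of dimension at most $\dim R - 2$. Therefore $\dim H_k(F^n(C_\bullet)) \leq \dim R - 2$, which is the desired bound.

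The argument is essentially a clean application of Kunz + Serre's normality criterion, so I do not anticipate a real obstacle; the only delicate point is book-keeping with the Peskine--Szpiro functor to justify the identification with $\Tor_k^R(F^n(R),R/I)$ and its compatibility with localization, but this is exactly what Fact \ref{locali} and the standard interpretation of $F^n$ as $F^n(R)\otimes_R -$ provide.
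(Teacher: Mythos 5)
Your proposal is correct and follows essentially the same route as the paper: localize at primes of height at most one, invoke Serre's criterion for normality to get regularity of $R_{\fp}$, then Kunz's flatness of Frobenius together with the compatibility of $F^n$ with localization (Fact \ref{locali}) to kill the higher homology there, and conclude the dimension bound via catenarity. The only cosmetic difference is that you name the homology as $\Tor_k^R(F^n(R),R/I)$ explicitly, which the paper does in the neighbouring Lemma \ref{tor} but not in this one.
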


 \begin{proof}
 In order to show this we prove $H_k(F^n(C_{\bullet}))_{\fp}=0$  $\forall\fp$
with $\dim R/ \fp\geq \dim R-1$. As the ring is catenary, $$\Ht(\fp)\leq 1\Longleftrightarrow\dim R/ \frak p \geq \dim R-1.$$
By  Serre's criterion,  for any prime ideal $\fp$ with $\Ht(\fp)\leq 1$, $R_{\fp}$ is regular.
By Fact \ref{kunz}, the Frobenius map is flat over $R_{\fp}$. Also, recall that  Frobenius map commutes with the localization, as Fact \ref{locali} dedicates. So, $H_k(F^n(C_{\bullet}))_{\fp}=0$ for all $\fp$
with $\dim R/ \fp\geq \dim R-1$.
\end{proof}

\begin{lemma}\label{short}
If $L\to M\to K$ be exact sequence of graded module (not necessarily finitely generated) such that
$\engrad(L)\leq O(p^n)$ and $\engrad(K)\leq O(p^n)$, then
$\engrad(M)\leq O(p^n)$. \end{lemma}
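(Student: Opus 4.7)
The statement is really a standard fact about $\engrad$ being sub-additive along exact sequences of graded modules, so the plan is short. Recall that $\engrad(N) = \sup\{i : N_i \neq 0\}$, so showing $\engrad(M) \leq O(q)$ amounts to proving that $M_d = 0$ whenever $d$ exceeds an explicit linear-in-$q$ bound.

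The plan is as follows. Write the hypothesis as $\engrad(L) \leq a_1 q + c_1$ and $\engrad(K) \leq a_2 q + c_2$ for constants $a_i, c_i$ independent of $q$, and let $f:L\to M$ and $g:M\to K$ denote the two morphisms in the sequence, both of which are graded (possibly up to a fixed integer shift $s_f, s_g$ that does not depend on $q$, since the sequence is produced from finitely-generated graded data). Pick any integer
\[
d \;>\; \max\bigl(a_1 q + c_1 + |s_f|,\ a_2 q + c_2 + |s_g|\bigr),
\]
and take a homogeneous $m \in M_d$. I claim $m = 0$, which gives the desired bound $\engrad(M) \leq O(q)$ directly.

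First, since $g$ is graded of shift $s_g$, the element $g(m)$ lives in $K_{d+s_g}$; by the choice of $d$ we have $d + s_g > \engrad(K)$, so $K_{d+s_g} = 0$, whence $g(m) = 0$. Exactness at $M$ then gives $m \in \im(f)$, so $m = f(l)$ for some homogeneous $l \in L_{d-s_f}$. Again by the choice of $d$, we have $d - s_f > \engrad(L)$, so $L_{d-s_f} = 0$, forcing $l = 0$ and hence $m = 0$. Thus $\engrad(M)$ is bounded by the linear-in-$q$ quantity above, i.e.\ $\engrad(M) \leq O(p^n)$.

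There is no real obstacle here; the only subtlety is bookkeeping with possible degree shifts of the graded morphisms, and as long as those shifts are independent of $q$ they are absorbed into the constant term of the linear bound. In the applications made earlier in the paper the maps $L \to M \to K$ arise from sequences like $0 \to I^{[q]} \to R \to R/I^{[q]} \to 0$ or from applying $F^n$ to a graded complex, where the shifts scale as $O(q)$, but they are nonetheless bounded by a linear function of $q$, and the argument above goes through verbatim.
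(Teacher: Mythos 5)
Your proof is correct and is essentially the spelled-out version of what the paper does: the paper merely notes that the claim reduces to short exact sequences and "follows by definition," and your degree-by-degree argument (kill $g(m)$ using the bound on $\engrad(K)$, lift to a homogeneous element of $L$ in a degree where $L$ vanishes) is exactly that definitional check, with the bookkeeping of fixed or $O(q)$-bounded shifts correctly absorbed into the linear bound. No gap to report.
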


\begin{proof} This easily reduces to the situation of short exact sequences. In this case the proof follows by definition. \end{proof}

  For any $X\subset \Spec(R),$ recall that
$X^i:=\{\fp\in X\mid\dim R/ \fp\geq i\}.$

\begin{lemma}\label{tor}
Suppose $\Proj( \frac{R}{I})^{i+1}$ is regular. Then
 $\dim (\Tor^R_{j}(F^n(R),\frac{R}{I}))\leq i$.
\end{lemma}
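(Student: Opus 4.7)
The plan is to pass to the local level at each graded prime of dimension $\geq i+1$ and show that the Tor module vanishes there. Since both $F^n(R)$ and $R/I$ carry natural graded $R$-module structures (via the Peskine--Szpiro construction together with the regrading of Remark \ref{g}), the module $T_j := \Tor^R_j(F^n(R), R/I)$ is graded. Consequently its Krull dimension is witnessed by a graded prime in its support, so it suffices to prove $(T_j)_\mathfrak{p} = 0$ for every graded prime $\mathfrak{p}$ with $\dim R/\mathfrak{p} \geq i+1$.

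Fix such a $\mathfrak{p}$ and assume $j \geq 1$. If $I \not\subseteq \mathfrak{p}$ then $(R/I)_\mathfrak{p} = 0$ and the vanishing is immediate, so I may assume $I \subseteq \mathfrak{p}$. Since $\dim R/\mathfrak{p} \geq i+1 \geq 1$ we have $\mathfrak{p} \neq \mathfrak{m}$, and hence $\mathfrak{p}$ corresponds to a point of $\Proj(R/I)^{i+1}$. The regularity hypothesis then tells us that $R_\mathfrak{p}$ is regular, so by Kunz's theorem (Fact \ref{kunz}) the Frobenius endomorphism is flat on $R_\mathfrak{p}$, and $F^n(R_\mathfrak{p})$ is a flat $R_\mathfrak{p}$-module.

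The final step is a flat base-change computation. By Fact \ref{locali} one has $F^n(R)_\mathfrak{p} \simeq F^n(R_\mathfrak{p})$, and since Tor commutes with localization,
$$(T_j)_\mathfrak{p} \;\simeq\; \Tor^{R_\mathfrak{p}}_j\bigl(F^n(R_\mathfrak{p}),(R/I)_\mathfrak{p}\bigr) \;=\; 0 \qquad (j \geq 1),$$
because the first argument is $R_\mathfrak{p}$-flat. This gives $\dim T_j \leq i$.

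I expect no serious technical obstacle; the only point requiring care is to read the hypothesis "$\Proj(R/I)^{i+1}$ is regular" as the regularity of the ambient local rings $R_\mathfrak{p}$ at the relevant graded primes containing $I$ (so that Kunz actually applies). With this reading the argument is a direct generalisation of Lemma \ref{dim}, which treats the extremal case $i = \dim R - 2$ by invoking Serre's $\mathcal{R}(1)$ for normal $R$; here we simply replace the normality input by the prescribed regularity along $V(I)$.
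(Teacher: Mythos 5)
Your proof is correct and takes essentially the same route as the paper: both note that $\Tor^R_{j}(F^n(R),\frac{R}{I})$ is graded, localize at graded primes $\fp$ with $\dim R/\fp\geq i+1$ (which, since the Tor module is killed by $I$, lie in $\Proj(\frac{R}{I})^{i+1}$), and then combine Fact \ref{locali} with Kunz's flatness (Fact \ref{kunz}) to see the localized Tor vanishes for $j\geq 1$. Your reading of the hypothesis as regularity of the local rings $R_{\fp}$ at those points is exactly the reading the paper's own proof uses.
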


\begin{proof}
Look at the graded free resolution  of $R/I$: $$\begin{CD}
\cdots @>>> \bigoplus_i R (-\beta_{di}) @>\varphi_d>> \cdots  @>>>   \bigoplus_i R (-\beta_{0i})@>>>0 .@.
\\
\end{CD}
$$Applay $F^n(-)$, this induces the following complex of graded modules and graded homomorphisms $$\begin{CD}
\cdots @>>> \bigoplus_i R (-\beta_{di}q) @>F^n(\varphi_d)>> \cdots  @>>>   \bigoplus_i R (-\beta_{0i}q)@>>>0  .@.
\\
\end{CD}$$Its $j^{th}$ homology is $\Tor^R_{j}(F^n(R),\frac{R}{I})$. In particular,  $\Tor^R_{j}(F^n(R),\frac{R}{I})$ is graded. The minimal prime ideals of $\Supp\left(\Tor^R_{j}(F^n(R),\frac{R}{I})\right)$ consists of homogeneous ideals.
Let $\fp$ be a minimal prime ideal in $\Supp\left(\Tor^R_{j}(F^n(R),\frac{R}{I})\right)$. Suppose $\dim R/\fp\geq i+1$. Then $\fp\in \Proj( \frac{R}{I})^{i+1}$.
By Fact \ref{locali}, we have$$\Tor^R_{j}(F^n(R),\frac{R}{I})_{\fp}\simeq \Tor^{R_{\fp}}_{j}(F^n(R)_{\fp},\frac{R_{\fp}}{IR_{\fp}})\simeq\Tor^{R_{\fp}}_{j}(F^n(R_{\fp}),\frac{R_{\fp}}{IR_{\fp}}). $$
Keep in mind that $R_{\fp}$ is regular. By Fact \ref{kunz}, $\Tor^{R_{\fp}}_{j}(F^n(R_{\fp}),\frac{R_{\fp}}{IR_{\fp}})=0$  for all $j>0$.
This contradiction shows that $\dim (\Tor^R_{j}(F^n(R),\frac{R}{I}))\leq i$.
\end{proof}

\begin{lemma}\label{kunz1}
Frobenius power commutes with the localization.
\end{lemma}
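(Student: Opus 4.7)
The plan is to reduce the equality $S^{-1}(I^{[q]})=(S^{-1}I)^{[q]}$ to an identity about generators, and then cross-check via the Peskine--Szpiro functor so that the statement is compatible with the uses made of it earlier in the paper (in particular in Fact \ref{localiz} and Lemma \ref{tor}).

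First I would fix notation: let $S\subseteq R$ be a multiplicative closed set and let $I=(a_1,\ldots,a_r)\lhd R$, so that by definition $I^{[q]}=(a_1^q,\ldots,a_r^q)$. Since the Frobenius power is intrinsic to $I$ (independent of the chosen generating set, as remarked right before Fact~\ref{kunz}), it is enough to check one generating set. Localizing, $S^{-1}I$ is generated inside $S^{-1}R$ by the images $a_i/1$, so
$$ (S^{-1}I)^{[q]} = \bigl((a_1/1)^q,\ldots,(a_r/1)^q\bigr) = \bigl(a_1^q/1,\ldots,a_r^q/1\bigr) = S^{-1}(I^{[q]}),$$
which is the desired equality.

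As a second (and conceptually cleaner) justification, I would invoke Fact~\ref{locali}, which gives $S^{-1}F^n(M)\simeq F^n(S^{-1}M)$ for any $R$-module $M$. Applying this to $M:=R/I$, and recalling from the discussion surrounding Remark~\ref{g} that $F^n(R/I)\simeq R/I^{[q]}$, we get
$$S^{-1}R\,/\,S^{-1}(I^{[q]}) \;\simeq\; S^{-1}F^n(R/I) \;\simeq\; F^n\bigl(S^{-1}R/S^{-1}I\bigr) \;\simeq\; S^{-1}R\,/\,(S^{-1}I)^{[q]},$$
and comparing kernels of the quotient maps from $S^{-1}R$ produces the equality of ideals.

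There is essentially no obstacle: the only subtlety is ensuring that the equality $I^{[q]}=(a_1^q,\ldots,a_r^q)$ is genuinely generating-set-independent, which has already been recorded in the preliminaries, so the proof is a one-line verification. I would therefore present the argument as a brief remark: pick generators, Frobenius-power them, and observe that localization commutes with taking ideals generated by prescribed elements.
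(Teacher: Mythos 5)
Your generator-based argument is exactly the ``straightforward'' verification that the paper leaves to the reader for Lemma~\ref{kunz1}: since $I^{[q]}$ is generated by the $q$-th powers of any generating set and localization preserves ideals given by prescribed generators, the equality $S^{-1}(I^{[q]})=(S^{-1}I)^{[q]}$ is immediate. Your second, Peskine--Szpiro cross-check via Fact~\ref{locali} is a harmless (if redundant) confirmation, so the proposal is correct and in line with the paper's intent.
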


\begin{proof}
This is straightforward and we leave it to the reader.
\end{proof}

\begin{lemma}\label{kunz2}
Let $R$ be  a regular ring with an ideal $I$. Then $F^n(I)\simeq I^{[q]}$ for any $n$.
\end{lemma}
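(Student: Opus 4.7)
The plan is to apply the Peskine--Szpiro functor to the canonical short exact sequence $0\to I\to R\to R/I\to 0$ and then identify the resulting pieces using the isomorphism $\varphi$ recalled at the start of the Preliminaries.

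First I would note that the Frobenius functor $F^n(-)=F^n(R)\otimes_R -$ is exact on the category of $R$-modules whenever $R$ is regular: this is precisely Fact \ref{kunz} (Kunz's theorem, stating that $F:R\to R$ is flat, hence so is the induced functor $F^n(R)\otimes_R-$). Applying $F^n$ to $0\to I\to R\to R/I\to 0$ therefore yields an exact sequence
\[
0\longrightarrow F^n(I)\longrightarrow F^n(R)\longrightarrow F^n(R/I)\longrightarrow 0.
\]

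Next I would use the explicit isomorphism recalled just before Remark \ref{g}: the assignment $a\otimes(r+\mathfrak{a})\mapsto ar^{p^n}+\mathfrak{a}^{[p^n]}$ gives $\varphi_{\mathfrak{a}}:F^n(R/\mathfrak{a})\xrightarrow{\sim}R/\mathfrak{a}^{[p^n]}$. Specialising to $\mathfrak{a}=0$ gives $\varphi_0:F^n(R)\xrightarrow{\sim}R$, $a\otimes r\mapsto ar^{p^n}$, and for $\mathfrak{a}=I$ one has $\varphi_I:F^n(R/I)\xrightarrow{\sim}R/I^{[q]}$. A direct check shows that the square
\[
\begin{CD}
F^n(R) @>\mathrm{id}\otimes\pi>> F^n(R/I)\\
@V\varphi_0 VV @VV\varphi_I V\\
R @>>> R/I^{[q]}
\end{CD}
\]
commutes, where the bottom map is the canonical quotient. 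Indeed, $\varphi_I((\mathrm{id}\otimes\pi)(a\otimes r))=\varphi_I(a\otimes(r+I))=ar^{p^n}+I^{[q]}$, which also equals the image of $\varphi_0(a\otimes r)=ar^{p^n}$ in $R/I^{[q]}$.

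Finally I would conclude by taking kernels: under the identification $\varphi_0$, the submodule $F^n(I)\subseteq F^n(R)$ is carried isomorphically onto the kernel of $R\to R/I^{[q]}$, which is $I^{[q]}$. Hence $F^n(I)\simeq I^{[q]}$ as $R$-modules, as claimed. No step here looks like an obstacle; the whole content is Kunz's flatness theorem together with the explicit $\varphi$ already introduced, so the proof is essentially a diagram chase of at most a few lines.
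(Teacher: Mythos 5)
Your argument is correct and is essentially the paper's own proof: apply the exact functor $F^n(-)$ (exactness being Kunz's flatness, Fact \ref{kunz}) to $0\to I\to R\to R/I\to 0$ and identify $F^n(R)\simeq R$ and $F^n(R/I)\simeq R/I^{[q]}$ to see that $F^n(I)$ is the kernel $I^{[q]}$. Your explicit check that the square of identifications commutes is a welcome extra precaution but does not change the route.
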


\begin{proof}
By Remark \ref{g} $F^n(R/I)\simeq R/ I^{[p^n]}$. In view of Fact \ref{kunz},
$F^n(-)$ is exact. Keep in mind $F^n(R)=R$. We apply $F^n(-)$ to the short exact sequence
$0\to I \to R \to R/I \to 0$ to arrives to the exact sequence: $0\to F^n(I) \to R \to R/I^{[q]} \to 0$. So, $F^n(I)\simeq I^{[q]}$.
\end{proof}

\textbf{Proof of Theorem \ref{lq}.}
 Set $d:=\dim R$. As $R$ is Cohen-Macaulay  and standard graded over a field, we have
$H^i_{\fm}(R)=0$  for all $0\leq i\leq d-1.$
We will use this fact several times.
In view of Lemma \ref{lc} and  Lemma \ref{11}  we can assume that $d\geq 3$.
Look at the graded free resolution  of $R/I$: $$\begin{CD}
\cdots @>>> \bigoplus_i R (-\beta_{di}) @>\varphi_d>> \cdots  @>>>   \bigoplus_i R (-\beta_{0i})@>\varphi_0>>0 .@.
\\
\end{CD}
$$Applay $F^n(-)$, this induces the following complex $C_{\bullet}$: $$\begin{CD}
\cdots @>>> \bigoplus_i R (-\beta_{di}q) @>>> \cdots  @>>>   \bigoplus_i R (-\beta_{(d-3)i}q)@>F^n(\varphi_{d-3})>>0  .@.
\\
\end{CD}$$  We apply Lemma \ref{kus} for $i=d-3$ and for the complex $C_{\bullet}$. As $R$ is Cohen-Macaulay, the condition Lemma \ref{kus}(1) satisfied. In the light of Lemma \ref{dim} $\dim H_k(C_{\bullet})\leq d-2,$
i.e., the property
Lemma \ref{kus}(2) checked.
For the simplicity, set  $R^{\beta _i}:=\bigoplus_jR (-\beta_{ij})$. Note that $C_{d-(d-3)}=R^{\beta _{(2d-3)-(d-3)}}=R^{\beta_d}$.  By Lemma \ref{kus}, $$\engrad\left(H^{d-3}_{\fm}(H_0(C_{\bullet}))\right) \leq q\max\{\beta_{di}\} +\engrad(H^{d}_{\fm}(R))\leq aq+c,   \ \ (\natural)$$
for some $a$ and $c$ not depending to $q$.

(i) Without loss of generality we may assume that $\dim R=3$.   Note that  $H_0(C_{\bullet})=R/I^{[p^n]}$. In view of $(\natural)$, we have $\engrad\left(H^{0}_{\fm}(R/I^{[p^n]})\right)\leq aq+c.$  Lemma \ref{mainlemma} completes the argument.

(ii) In view of i), without loss of generality we may assume that $\dim R\geq 4$. First we deal with the case $\dim R=4$. Note that
$$R^{\beta_2}\stackrel{\varphi_2} \longrightarrow R^{\beta_1}\longrightarrow I\longrightarrow 0,$$is a presenting sequence of $I$.
As the  tensor product is right exact, $$R^{\beta_2}\stackrel{F^n(\varphi_2)} \longrightarrow R^{\beta_1}\stackrel{\rho}\longrightarrow F^n(I)\longrightarrow 0$$is exact. Therefore, $H_0(C_{\bullet}) =F^n(I)$. In a similar vein, we have the exact sequence
$$F^n(I)\stackrel{\phi_2}\longrightarrow R\stackrel{\phi_1}\longrightarrow F^n(R/I)\longrightarrow 0$$
Note that $\ker(\phi_1)=I^{[p^n]}$.
There is a natural surjection $$\pi:F^n(I)\longrightarrow \im(\phi_2)=\ker(\phi_1)\longrightarrow 0$$Set $L:=\ker(\pi)$.
Let $\fp$ be a nonsingular prime ideal.
Due to Fact \ref{locali} and Lemma \ref{kunz1} we have the exact sequence $$0\longrightarrow L_{\fp}\longrightarrow F^n(I_{\fp})\longrightarrow (I_{\fp})^{[p^n]}\longrightarrow 0.$$ In view of Lemma \ref{kunz2}, we get $L_{\fp}=0$.
According  to the regularity of $\Proj( \frac{R}{I})^2$, we observe that $\dim(L)<2$. Keep the Grothendieck's vanishing theorem in mind. The exact sequence $$0\longrightarrow L\longrightarrow F^n(I)\longrightarrow I^{[p^n]}\longrightarrow 0$$
induces the exact sequence $$H^{1}_{\fm}(F^n(I))\longrightarrow H^{1}_{\fm}(I^{[p^n]})\longrightarrow H^{2}_{\fm}(L)=0\quad(\ast)$$
Incorporate this with $(\natural)$
and
Lemma \ref{short} to conclude  that $\engrad(H^1_{\fm}(I^{[p^n]}))\leq aq+c.$
Then, in view of Lemma \ref{s2}, $H^0_{\fm}(R/I^{[p^n]})\simeq H^1_{\fm}(I^{[p^n]})$.
Thus, $\engrad(H^0_{\fm}(R/I^{[p^n]}))\leq aq+c,$
for some $a$ and $c$ not depending to $q$. It remains to apply Lemma \ref{mainlemma} to conclude the claim.
Thus we can assume that $d>4$.
By looking at the exact sequence  $$\begin{CD}
0 @>>> \im F^n(\varphi_{d-4}) @>>> \bigoplus_i R (-\beta_{(d-3)i}q)  @>>> H_0(C_{\bullet})  @>>>0  @.,
\\
\end{CD}$$we get that $$H^{d-3}_{\fm}(H_0(C_{\bullet}))\simeq H^{d-2}_{\fm}(\im F^n(\varphi_{d-4})).$$
By the following graded exact sequence $$\begin{CD}
 0 @>>> \im(F^n(\varphi_{d-4}))  @>>>   R^{\beta_{d-3}} @>>>  R^{\beta_{d-3}}/ \im(F^n(\varphi_{d-4})) @>>> 0@.
\\
\end{CD}
$$we observe $$H^{d-3}_{\fm}\left(\frac{R^{\beta_{d-3}}}{ \im(F^n(\varphi_{d-4}))}\right)\simeq H^{d-2}_{\fm}(\im(F^n(\varphi_{d-4}))).$$
Keep in mind $$ \ker(F^n(\varphi_{d-3}))/ \im(F^n(\varphi_{d-4})) \simeq  \Tor^R_{d-3}(F^n(R), R/I).$$The following exact sequence of graded modules with homogeneous morphisms $$
 0 \to \frac{\ker(F^n(\varphi_{d-3}))}{ \im(F^n(\varphi_{d-4}))} \to   \frac{R^{\beta_{d-3}}}{\im(F^n(\varphi_{d-4}))}\to \frac{R^{\beta_{d-3}}}{\ker(F^n(\varphi_{d-3}))}\to 0
$$
implies the following exact sequence of graded modules:
$$H^{d-3}_{\fm}\left(\frac{R^{\beta_{d-3}}}{\im(F^n(\varphi_{d-4}))}\right)
\to H^{d-3}_{\fm}\left(\frac{R^{\beta_{d-3}}}{\ker(F^n(\varphi_{d-3}))}\right) \to H^{d-2}_{\fm}\left(\Tor^R_{d-3}(F^n(R),\frac{R}{I})\right).$$
As $d>4$, we have $d-2> \dim (\Tor^R_{d-3}(F^n(R),\frac{R}{I})).$ By Grothendieck's vanishing theorem,$$H^{d-2}_{\fm}(\Tor^R_{d-3}(F^n(R),\frac{R}{I}))=0.$$In particular,
$\engrad\left(H^{d-2}_{\fm}(\Tor^R_{d-3}(F^n(R),\frac{R}{I}))\right)\leq O(p^n).$ So, in view of Lemma \ref{short}, $$\engrad\left(H^{d-3}_{\fm}(\frac{R^{\beta_{d-3}}}{\ker(F^n(\varphi_{d-3}))})\right)\leq O(p^n)  \quad (\dagger).$$

  \begin{enumerate}
 \item[] An easy case:  $\dim R=5$.  Recall from part (i) that there is an exact sequence
$$R^{\beta_2}\stackrel{F^n(\varphi_2)} \longrightarrow R^{\beta_1}\stackrel{\rho}\longrightarrow F^n(I)\longrightarrow 0.$$
Thus, $$\frac{R^{\beta_2}}{\ker(F^n(\varphi_2))}=\im (F^n(\varphi_2))=\ker(\rho)\quad(\star)$$ and $$0\longrightarrow \ker(\rho)\longrightarrow R^{\beta_1}\longrightarrow F^n(I)\longrightarrow 0.$$This induces the exact sequence $$
0= H^1_{\fm}(R^{\beta_1})\to H^1_{\fm}(F^n(I))  \to H^2_{\fm}(\ker(\rho)) \to H^2_{\fm}(R^{\beta_1})=0.
$$
 Therefore,
\[\begin{array}{ll}
\engrad(H^0_{\fm}(R/I^{[p^n]})) &\stackrel{\ref{s2}}= \engrad(H^1_{\fm}(I^{[p^n]}))\\
&\stackrel{(\ast)}\leq\engrad\left(H^1_{\fm}(F^n(I))) \right)\\
&=\engrad\left(H^2_{\fm}(\ker(\rho)) \right)\\
&\stackrel{(\star)}=\engrad(H^2_{\fm}(\frac{R^{\beta_2}}{\ker(F^n(\varphi_2))}))\\
&\stackrel{(\dagger)}\leq aq+c\quad(\ddagger)
\end{array}\]
So, Lemma \ref{mainlemma} completes the argument in this case.
\end{enumerate}
Now assume $d>5$.
Our next task is to descent $(\dagger)$. This is a repetition of the above argument. We do this for the convenience of the reader.
Keep in mind that $$\frac{R^{\beta_{d-3}}}{\ker(F^n(\varphi_{d-3}))}\simeq \im (F^n(\varphi_{d-3})) \quad(\ast,\ast)$$
Look at the exact sequence $$0\to \im (F^n(\varphi_{d-3}))\to R^{\beta_{d-4}}\to \frac{R^{\beta_{d-4}}}{\im(F^n(\varphi_{d-3}))}\to 0.$$
This gives $$0\to H^{d-4}_{\fm}\left(\frac{R^{\beta_{d-4}}}{\im(F^n(\varphi_{d-3}))}\right)\to H^{d-3}_{\fm}\left(\im (F^n(\varphi_{d-3}))\right) \to 0,$$and by the help of $(\ast,\ast)$ and  $(\dagger)$ we observe that $$\engrad\left(H^{d-4}_{\fm}(\frac{R^{\beta_{d-4}}}{\im(F^n(\varphi_{d-3}))})\right)\leq O(p^n).$$
Again look at $$
 0 \to \frac{\ker(F^n(\varphi_{d-4}))}{ \im(F^n(\varphi_{d-5})) }\to   \frac{R^{\beta_{d-4}}}{\im(F^n(\varphi_{d-5}))}\to \frac{ R^{\beta_{d-4}}}{\ker(F^n(\varphi_{d-4}))}\to 0.
$$As $d>5$, we have $d-3> \dim (\Tor^R_{d-4}(F^n(R),\frac{R}{I})).$ By Grothendieck's vanishing theorem,
$$\engrad\left(H^{d-3}_{\fm}(\Tor^R_{d-4}(F^n(R),\frac{R}{I}))\right)\leq O(p^n).$$ Hence, $\engrad\left(H^{d-4}_{\fm}(\frac{R^{\beta_{d-4}}}{\ker(F^n(\varphi_{d-4}))})\right)\leq O(p^n)$ i.e., $(\dagger)$ descended.
Doing inductively, we get$$\engrad\left(H^2_{\fm}(\frac{R^{\beta_2}}{ \ker(F^n(\varphi_{2}))})\right)\leq O(p^n).$$By the same vein as $(\ddag)$, we have $\engrad(H^0_{\fm}(R/I^{[p^n]}))\leq aq+c$ and again Lemma \ref{mainlemma} yields the claim.

(iii) This is in Corollary \ref{pdim}. $\hspace{109 mm}\square$

$\vspace{1 mm}$

One may extend the above proof by computing $\Tor_i^R(F^n(R),R/I)$.  Here we give an example of a graded ring $R$ with a graded ideal $I$ such that $\dim\left( \Tor_i^R(F^n(R),R/I)\right)=\dim R/I=\dim R$  for all $i>0.$

\begin{example}
Let $R:=\mathbb{F}_p[X_1,X_2,\ldots,X_{\ell+1}]/(X_1X_2)$. We use lowercase letters here to elements in $R$.
 Set $I:=(x_1)$.
Look at the graded free resolution  of $R/I$: $$\begin{CD}
F_{\bullet}:\cdots @>>>R(-2) @>x_2>>R(-1) @>x_1>>    R @>>>0 .@.
\\
\end{CD}$$ As  $F_{\bullet}$ is minimal, $\pd(I)=\infty$.
Then $F^n(F_{\bullet})$ is of the form
$$\begin{CD}
\cdots @>>>R(-2p^n) @>x_2^{p^n}>>R(-p^n) @>x_1^{p^n}>>    R @>>>0 .@.
\\
\end{CD}$$
Thus, (up to a shifting  by $p^n-1$)\begin{equation*}
\Tor_i^R(F^n(R),M)= \left\{
\begin{array}{rl}
\frac{(0:_Rx_1^{p^n})}{(x_2^{p^n})}\simeq \frac{(x_2)}{(x_2^{p^n})}& \qquad\text{ \ \ if } i\in 2\mathbb{N}_0+1\\
\frac{(0:_Rx_2^{p^n})}{(x_1^{p^n})}\simeq \frac{(x_1)}{(x_1^{p^n})}&\qquad\text{  \ \ if } i\in 2\mathbb{N}
\end{array} \right.
\end{equation*}
and so \begin{equation*}
\Ann_R\left(\Tor_i^R(F^n(R),R/I)\right)= \left\{
\begin{array}{rl}
(x_2^{p^n-1})& \qquad\text{ \ \ if } i\in 2\mathbb{N}_0+1\\
(x_1^{p^n-1})&\qquad\text{  \ \ if } i\in 2\mathbb{N}.
\end{array} \right.
\end{equation*}Therefore, we can not control  $\dim\left(\Tor_i^R(F^n(R),R/I)\right)$
by some things independent of $\dim (R/I)$ for all $i>0$ and all $\ell$.
\end{example}

\section{An application to Hilbert-Kunz multiplicity }

\begin{discussion}\label{vra} In view of \cite[Page 3]{V1} we  borrow the following quotation:
"If one could check that (LC) holds for the Frobenius
powers of the ideal $I$ obtained in the first step of Brenner's construction,
our result could then be used to obtain a more direct and explicit route
to $\fm$-primary ideals with irrational Hilbert-Kunz multiplicities."
\end{discussion}

The ring in Brenner's construction is  $R:=
K[X, Y, Z, W]/(F)$ where $F$ is homogeneous of degree four and $K$ has positive
characteristic $p \gg 0$. The ideal in Brenner's construction is $I:=(a,b)$ for some homogenous elements $  a$ and $b$ in $R$.

\begin{corollary} \label{vra}
Adopt the above notation. Then the (LC) holds for the proposed ring $R$.
\end{corollary}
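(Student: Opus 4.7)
The plan is to verify that the ring $R = K[X,Y,Z,W]/(F)$ in Brenner's construction satisfies the hypotheses of Theorem \ref{lq}(i), after which the conclusion is immediate for every homogeneous ideal of $R$, in particular for $I = (a,b)$.

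First I would record the basic structural properties of $R$. Since $K[X,Y,Z,W]$ is a $4$-dimensional standard graded polynomial ring and $F$ is a nonzero homogeneous form of degree four, the quotient $R = K[X,Y,Z,W]/(F)$ is a standard graded hypersurface of dimension $3$ over the field $R_0 = K$ of characteristic $p \gg 0$. Hypersurfaces in a regular ring are automatically Cohen--Macaulay, so the Cohen--Macaulay and standard-graded hypotheses of Theorem \ref{lq} are satisfied.

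Next I would verify normality. The defining equation $F$ cuts out a smooth $K3$-surface in $\mathbb{P}^3$, so $\Proj(R)$ is regular; equivalently $R_{\fp}$ is regular for every non-maximal homogeneous prime $\fp$. In particular $R$ satisfies Serre's conditions $\mathcal{R}(1)$ and $\mathcal{S}(2)$ (the latter coming from the Cohen--Macaulay property), so by Serre's criterion $R$ is normal. Combined with $\dim R = 3 < 4$, the hypothesis of Theorem \ref{lq}(i) is met.

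Therefore, applying Theorem \ref{lq}(i) to $R$ and to the homogeneous ideal $I=(a,b)$, there exists an integer $b \in \mathbb{N}_0$ independent of $q$ such that $\fm^{bq} H^0_{\fm}(R/I^{[q]}) = 0$ for all $q$, which is precisely the (LC) condition for the proposed ring. Since the only genuine verification is the two easily checked inputs (dimension and normality), no real obstacle arises; the novelty here lies entirely in Theorem \ref{lq}, and Corollary \ref{vra} is merely its direct specialization to the $K3$-setting of $(\star)$.
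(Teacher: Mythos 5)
Your proposal is correct and follows exactly the paper's route: the paper likewise notes that $R$ is normal, Cohen--Macaulay, of dimension $3$, with $I$ homogeneous, and then invokes Theorem \ref{lq}(i). You merely spell out the justifications (hypersurface gives Cohen--Macaulayness, smoothness of the $K3$-surface plus Serre's criterion gives normality) that the paper leaves implicit.
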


\begin{proof}
Note that $R$ is normal, Cohen-Macaulay, $\dim R=3$ and $I$ is homogeneous.
So the desired property follows from Theorem \ref{lq}.
\end{proof}

\section{Applications to $e_{gHK}(-)$ of 2-dimensional rings}

Among other things we give  a proof of Corollary \ref{a2} and Corollary \ref{node}.

\begin{fact}
Let $R$ be a ring of prime characteristic $p$, $I\lhd R$ an ideal and $M$ a finitely generated module.
The following holds.\begin{enumerate}
\item[i)]
$\min(I)=\min(I^{[q]})$ for any $q:=p^n$.
\item[ii)]  $\Supp(M)=\Supp(F^n(M))$ for all $n$.
\end{enumerate}
\end{fact}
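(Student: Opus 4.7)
The plan is to deduce (i) directly from the equality $\sqrt{I}=\sqrt{I^{[q]}}$, and then to combine (i) with Fact \ref{locali} and Nakayama's lemma to obtain (ii).

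For (i), the containment $I^{[q]}\subseteq I$ is immediate from the definition, which gives $\sqrt{I^{[q]}}\subseteq\sqrt{I}$. For the reverse, write $I=(g_1,\ldots,g_r)$ and let $x=\sum a_i g_i\in I$. The freshman's dream in characteristic $p$ yields $x^q=\sum a_i^q g_i^q\in I^{[q]}$, so $I\subseteq\sqrt{I^{[q]}}$. Hence $\sqrt{I}=\sqrt{I^{[q]}}$, i.e., $V(I)=V(I^{[q]})$, and since the minimal primes of an ideal depend only on its radical we conclude $\min(I)=\min(I^{[q]})$.

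For (ii), by Fact \ref{locali} we have $F^n(M)_{\fp}\simeq F^n(M_{\fp})$ for every $\fp\in\Spec(R)$, so it suffices to establish, for any finitely generated module $N$ over a Noetherian local ring $(S,\fn)$, the equivalence $N\neq 0\Longleftrightarrow F^n(N)\neq 0$. The direction $(\Leftarrow)$ is obvious since $N=0$ gives $F^n(N)=F^n(S)\otimes_S N=0$. For $(\Rightarrow)$, Nakayama's lemma supplies a surjection $N\twoheadrightarrow N/\fn N\simeq k(\fn)^d$ with $d\geq 1$, and right exactness of $F^n$ produces a surjection $F^n(N)\twoheadrightarrow F^n(k(\fn))^d$. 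The assignment $a\otimes b\mapsto ab^{p^n}$ defines a nonzero $k(\fn)$-linear map $F^n(k(\fn))\to k(\fn)$ sending $1\otimes 1\mapsto 1$, so $F^n(k(\fn))\neq 0$, and therefore $F^n(N)\neq 0$.

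I expect the only point that requires care to be the bookkeeping of the twisted module structure on $F^n(-)$ when identifying $F^n(N/\fn N)$ with $F^n(k(\fn))^d$ and showing $F^n(k(\fn))\neq 0$; otherwise both items are routine consequences of the definitions together with results already recorded in Section 2.
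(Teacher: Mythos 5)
Your argument is correct; the paper records this Fact without proof, and your route (the radical equality $\sqrt{I}=\sqrt{I^{[q]}}$ coming from $I^{[q]}\subseteq I$ and the freshman's dream for (i), and Fact \ref{locali} plus Nakayama together with $F^n(k(\fn))\neq 0$ for (ii)) is exactly the standard argument the paper implicitly relies on. The only cosmetic slip is calling the evaluation map $a\otimes b\mapsto ab^{p^n}$ ``$k(\fn)$-linear'': the source $F^n(k(\fn))\simeq S/\fn^{[q]}$ is in general only an $S$-module, but since the map is a well-defined nonzero $S$-linear map sending $1\otimes 1$ to $1$, the conclusion $F^n(k(\fn))\neq 0$ stands and the proof goes through.
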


\begin{lemma}\label{avoid} Let $R$ be a standard graded ring over a field of prime characteristic $p$ and
let $M$ be a $1$-dimensional finitely generated graded $R$-module.
Then $\cup_n \Ass(F^n(M))$ is finite.
\end{lemma}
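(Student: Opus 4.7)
\textbf{Plan for the proof of Lemma \ref{avoid}.}
The plan is to combine the preceding Fact ($\Supp(F^n(M)) = \Supp(M)$) with the graded nature of the Frobenius functor, so as to trap every $\Ass(F^n(M))$ inside a single finite set of homogeneous primes determined only by $M$.

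First I would record that $F^n(M)$ carries a natural $\mathbb{Z}$-grading: applying $F^n(-)$ to a graded free presentation
$\bigoplus_j R(-\beta_{1j}) \to \bigoplus_i R(-\beta_{0i}) \to M \to 0$
of $M$ produces a graded presentation of $F^n(M)$ with shifts multiplied by $q$, exactly as in the proof of Lemma \ref{mainlemma}. Since $F^n(M)$ is graded, every element of $\Ass(F^n(M))$ is a homogeneous prime of $R$.

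Next, by the preceding Fact, $\Supp(F^n(M)) = \Supp(M) = V(\Ann M)$, so every $\fp\in \Ass(F^n(M))$ satisfies $\dim R/\fp \leq \dim M = 1$. I would then classify homogeneous primes in $V(\Ann M)$ by their dimension. A dimension-$1$ homogeneous prime in $V(\Ann M)$ is of maximal dimension in the support, hence is minimal over $\Ann M$; the Noetherian hypothesis yields only finitely many such primes. On the other hand, a homogeneous prime $\fp$ with $\dim R/\fp = 0$ makes $R/\fp$ a graded integral domain of Krull dimension zero, i.e.\ a graded field over the ground field $R_0$; since any nonzero homogeneous element of positive degree in such a ring would require an inverse of negative degree, we conclude $R_{>0} \subseteq \fp$ and therefore $\fp = \fm$.

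Putting these observations together, $\bigcup_n \Ass(F^n(M))$ is contained in the finite set consisting of $\fm$ together with the dimension-$1$ minimal primes of $\Ann M$, and the lemma follows. I do not anticipate a real obstacle: the whole argument is bookkeeping with graded primary decomposition once one invokes the Fact on $\Supp(F^n(M))$ and records that Frobenius preserves the graded structure.
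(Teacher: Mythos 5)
Your proof is correct and takes essentially the same route as the paper: both combine the fact $\Supp(F^n(M))=\Supp(M)$ with $\dim M=1$ to conclude that every associated prime of $F^n(M)$ other than $\fm$ is minimal in $\Supp(M)$, so that $\bigcup_n\Ass(F^n(M))\subseteq\min\Supp(M)\cup\{\fm\}$, a finite set. Your explicit verification that $F^n(M)$ is graded, hence has homogeneous associated primes, so that any dimension-zero associated prime must equal $\fm$, simply spells out a step the paper leaves implicit.
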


\begin{proof}
Denote the unique maximal graded ideal of $R$ by  $\fm$ and take $$\fp \in \bigcup_n \Ass(F^n(M))\setminus\{\fm\}.$$ Since $\dim (M)=1$ and in view of the above  fact, we deduce  $$\fp\in \bigcup_n\min\{\Ass_R(F^n(M))\}=\bigcup_n\min\{\Supp(F^n(M))\}=\min\{\Supp(M )\}.$$
Thus, $$\bigcup_n \Ass (F^n(M))\subset \min\{\Supp(M )\}\cup \{\fm\}$$ which is a finite set.
\end{proof}

\begin{remark} i) Suppose $R$ is a semilocal ring and $M$ is a finitely generated module.
If $\dim (M)=1$, then $\bigcup \Ass(F^n(M))$ is finite. Indeed, apply the above argument.

ii) It may be $\mid\cup\Ass(F^n(M))\mid=\infty$, see \cite{kat1}. However, countable prime
avoidance holds for rings that contain an uncountable field and holds in any complete
local ring, see \cite{bru}.
\end{remark}

\begin{lemma} \label{holger}(See \cite{Holger} and \cite{Holger2}) Let $R$ be a two-dimensional normal standard graded $K$-domain over an
algebraically closed field $K$ of prime characteristic $p$ and  $I$ a homogeneous ideal of dimension zero. Then
$f_{HK}^{R/I}( q) = e_{HK}(R/I)q^2 + \gamma(q),$
where $e_{HK}(R/I)$ is a rational number and $\gamma(q)$ is a bounded function.
Moreover if $K$ is the algebraic closure of a finite field, then $\gamma(q)$ is an eventually
periodic function.
\end{lemma}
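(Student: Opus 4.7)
\textbf{Proof proposal for Lemma \ref{holger}.} Since the statement is Brenner's theorem on Hilbert--Kunz functions of two-dimensional normal graded rings, my plan is to follow the geometric/sheaf-theoretic route via syzygy bundles on the smooth projective curve $C:=\Proj(R)$. The idea is to translate the length computation defining $f_{HK}^{R/I}$ into a global section count for twists of iterated Frobenius pull-backs of a syzygy bundle, and then control these section counts by the strong Harder--Narasimhan filtration.

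\emph{First step (geometrisation).} Pick homogeneous generators $f_1,\ldots,f_r$ of $I$ with $\deg f_i=d_i$, and form the locally free syzygy sheaf $\shS:=\Syz(f_1,\ldots,f_r)$ on $C$, sitting in the short exact sequence $0\to \shS\to \bigoplus_i \mathcal{O}_C(-d_i)\to \mathcal{O}_C\to 0$. A standard identification, using that $R$ is a two-dimensional normal Cohen--Macaulay graded domain, rewrites
\[
f_{HK}^{R/I}(q)\;=\;\dim_K(R/I^{[q]})\;=\;\sum_{m\in\mathbb{Z}}\dim_K H^1(C,\,F^{e\,*}\shS(m))\;+\;\text{(easy polynomial correction)},
\]
where the sum is in fact finite because the twists in question are eventually positive (no $H^1$) or very negative (no $H^0$ and then Riemann--Roch handles $H^1$). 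So the whole problem reduces to a uniform asymptotic analysis of global sections of $F^{e\,*}\shS(m)$ as $q=p^e$ and $m$ vary.

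\emph{Second step (strong Harder--Narasimhan).} On a smooth projective curve in positive characteristic, every vector bundle $\mathcal{E}$ has a strong Harder--Narasimhan filtration $0=\mathcal{E}_0\subset \mathcal{E}_1\subset\cdots\subset \mathcal{E}_t=\mathcal{E}$, i.e.\ a filtration that remains the HN-filtration after every Frobenius pull-back, with strongly semistable quotients $\mathcal{Q}_j$ of slopes $\mu_j$. Apply this to $\shS$. By Langer's theorem this filtration exists and stabilises after finitely many Frobenius pull-backs, and each $F^{e\,*}\mathcal{Q}_j$ is strongly semistable of slope $q\mu_j$. The key dichotomy that drives the rational leading term is: for a strongly semistable bundle $\mathcal{F}$ of slope $\mu$ on $C$, $H^0(C,\mathcal{F}(m))=0$ once $m<-\mu/\deg\mathcal{O}_C(1)$, and Riemann--Roch gives an exact formula once $m>2g-2-\mu$. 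Summing over the filtration pieces, the contribution of each $\mathcal{Q}_j$ to $f_{HK}^{R/I}(q)$ becomes a quadratic polynomial in $q$ (coming from $\chi$-computations via Riemann--Roch) up to a bounded error living in a ``transition window'' of $m$'s of width bounded independently of $q$.

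\emph{Third step (assembling rationality and boundedness of $\gamma$).} Collecting contributions, the $q^2$-coefficient is a rational linear combination of the slopes $\mu_j$ and ranks of the $\mathcal{Q}_j$, all of which are rational numbers (slopes of subsheaves of a fixed bundle on a fixed curve lie in $\tfrac{1}{N}\mathbb{Z}$ for some $N$). Hence $e_{HK}(R/I)\in\mathbb{Q}$. The remainder $\gamma(q)=f_{HK}^{R/I}(q)-e_{HK}(R/I)q^2$ is a sum of: (a) Euler-characteristic linear terms in $q$ that cancel against the leading expansion, and (b) error terms coming from the finitely many $m$'s inside the transition windows, each of which is bounded by $\dim H^1$ of a semistable bundle of slope close to zero on a fixed curve, hence bounded uniformly in $q$.

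\emph{Fourth step (periodicity over $\overline{\mathbb{F}}_p$).} When $K=\overline{\mathbb{F}}_p$, everything descends to some $\mathbb{F}_{p^s}$. On a smooth curve over a finite field, the set of isomorphism classes of semistable vector bundles with fixed rank and degree is finite (it corresponds to a finite set of $\mathbb{F}_{p^s}$-points of the moduli space of semistable bundles, which is of finite type). Therefore the sequence $\{F^{e\,*}\mathcal{Q}_j\}_{e\geq 0}$ is eventually periodic in $e$. This forces every $\dim H^0(C,F^{e\,*}\shS(m))$-count in the finite transition window to be eventually periodic in $q$, so the bounded remainder $\gamma(q)$ is eventually periodic.

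\emph{Main obstacle.} The delicate point is Step~2: proving that the strong Harder--Narasimhan filtration exists and stabilises. This is not formal; it rests on Langer's theorem bounding instability under Frobenius pull-back and on boundedness results for moduli of semistable bundles on curves. Without this input, one can only get upper/lower bounds of the right shape, but not equality with a single rational $e_{HK}(R/I)$. Once Step~2 is granted, Steps~1, 3, 4 are essentially bookkeeping with Riemann--Roch and a finiteness argument for moduli.
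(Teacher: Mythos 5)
The paper offers no proof of this lemma at all---it is quoted from Brenner's papers \cite{Holger} and \cite{Holger2}---and your outline (syzygy bundle on $\Proj(R)$, strong Harder--Narasimhan filtration via Langer, Riemann--Roch plus bounded transition windows, finiteness over a finite field giving periodicity) is essentially the argument of those references, so your route coincides with the paper's source. The one point to tighten is Step 4: the degree of $F^{e\ast}\mathcal{Q}_j$ grows like $q\deg\mathcal{Q}_j$, so the finiteness cannot be invoked for ``fixed rank and degree''; one must first normalize by twisting so that the bundles $F^{e\ast}\mathcal{Q}_j(m)$ have slopes in a bounded interval, hence form a bounded family with finitely many isomorphism classes over the finite field, and then the (twist-normalized) Frobenius operation on this finite set yields the eventual periodicity---which is exactly the normalization used in \cite{Holger}.
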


 \begin{discussion}\label{dislin}
 Here is  a comment on the  construction of $s$ in Lemma \ref{linear}(ii).
Set $\Ass(F^n(R/I))^\circ:= \Ass(F^n(R/I))\setminus\{\fm\}$. Due to \cite[Proposition 2.4]{V1}, $s\in \fm^{b(I)} \setminus  \bigcup _n \Ass(F^n(R/I))^\circ.$ One
can pick $s$ to be  homogeneous, if $I$ and $R$ are homogeneous.
\end{discussion}

\emph{Proof of Corollary \ref{a2}.}
If $\dim R/I=0$, then $I$ is primary to the maximal ideal. In this case
the generalized Hilbert-Kunz theory is the classical Hilbert-Kunz theory.  The claim in this case is the subject of Lemma \ref{holger}. If $\dim R/I=2$,
then $I=0$ and $H^0_{\fm}(R/I^{[q]})=0$, so  there are nothing to prove. Then, without loss
of the generality we can assume that $\dim R/I=1$.  Due to Lemma \ref{avoid}, $\bigcup_n \Ass(R/I^{[q]})$ is finite.
In the light of  Lemma \ref{lc}(1), the (LC) condition holds.
By Discussion \ref{dislin}, there is a homogeneous $s$ such that $$f^{R/I}_{gHK} = 2f^{R/I+(s)}_{HK }- f^{R/I+(s^2)}_{HK} \quad(\dag)$$
Thanks to  Lemma \ref{holger},\begin{enumerate}
\item[1)]
$f_{gHK}^{R/I+(s)}(n) = e'q^2 + \gamma'(q)$,
where $e'\in \mathbb{Q}$ and $\gamma'(q)$ is  bounded,
\item[2)]$f_{gHK}^{R/I+(s^2)}(n) = e''q^2 + \gamma''(q)$,
where $e''\in \mathbb{Q}$ and $\gamma''(q)$ is bounded,
\item[3)]if $K=\overline{\mathbb{F}}_p$, then $\gamma'(q)$  and $\gamma''(q)$ are eventually
periodic.\end{enumerate} Combining $1)$ and $2)$  along with $(\dag)$ yields that
\[\begin{array}{ll}
f^{R/I}_{gHK}&=2(e'q^2 + \gamma'(q))-(e''q^2 + \gamma''(q))\\&= (2e'-e'')q^2+(2\gamma'(q)-\gamma''(q))\\&:= e_{gHK}(R/I)q^2 + \gamma(q).
\end{array}\] So,
$i)$ follows. The item $3)$ presents the proof of $ii)$. $\hspace{80 mm}\square$

\begin{lemma}\label{mon} (\cite[Theorem 3.7]{mon1})
Let $R$ be the coordinate ring of  a nodal  plane cubic projective curve over an algebraically closed field of prime characteristic $p$.
Let $J$ be a homogeneous primary to the irrelevant ideal. Then $f_{gHK}^{R/J}(n)=
\mu q^2 +aq- r$, $r$ only depends on $q \mod 3$. There is an explicit formula for $r$.
\end{lemma}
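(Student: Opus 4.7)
The plan is to pass to the normalization $\pi\colon \PP^1 \to \bar C$ of the nodal cubic and exploit Grothendieck's splitting theorem for vector bundles on $\PP^1$, in the spirit of Brenner's syzygy-bundle computations; the period-$3$ behaviour will arise from the degree $3$ of the embedding $\bar C \subset \PP^2$.

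First, I would set up the normalization sequence $0 \to R \to \tilde R \to M \to 0$, where $\tilde R := \bigoplus_{n\geq 0} H^0(\PP^1, \mathcal{O}_{\PP^1}(3n))$ is the section ring of $\mathcal{O}_{\PP^1}(3)$ and $M = \tilde R/R$ is supported at the node with $\dim_k M_n = 1$ for $n\geq 1$. Factoring the natural homomorphism $R/J^{[q]} \to \tilde R/J^{[q]}\tilde R$ through its image yields the length identity
\[
f_{gHK}^{R/J}(q) \;=\; \ell\!\left(\tilde R/J^{[q]}\tilde R\right) \,+\, \ell\!\left((J^{[q]}\tilde R \cap R)/J^{[q]}\right) \,-\, \ell\!\left(M/J^{[q]}M\right),
\]
so the problem reduces to (i) computing the Hilbert-Kunz function on $\tilde R$ and (ii) tracking the two conductor-type corrections.

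For (i), with $J = (f_1,\dots,f_s)$ of degrees $d_i$, the syzygy bundle on $\PP^1 = \Proj \tilde R$ splits by Grothendieck as $\Syz(f_1,\dots,f_s) \simeq \bigoplus_j \mathcal{O}_{\PP^1}(-e_j)$, and the Frobenius pullback $\Syz(f_1^q,\dots,f_s^q)$ is again a direct sum of line bundles of total degree $-3q\sum_i d_i$. The length $\ell(\tilde R/J^{[q]}\tilde R)$ is then recovered from the twisted syzygy complex as a finite alternating sum of values $\dim H^0(\PP^1, \mathcal{O}_{\PP^1}(3n - q e_j)) = \max(0, 3n - qe_j + 1)$; summation over $n$ produces a quadratic leading term $\mu q^2$, a linear correction $a'q$, and a bounded remainder whose value depends on $qe_j \bmod 3$ through the ceiling $\lceil qe_j/3 \rceil$ that determines where the sum begins. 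For (ii), the local model at the node is $R_\fp \simeq k[[x,y]]/(xy)$ with normalization $k[[x]] \oplus k[[y]]$; a direct computation in this local model shows that both $(J^{[q]}\tilde R \cap R)/J^{[q]}$ and $M/J^{[q]}M$ have lengths linear in $q$ with a bounded remainder depending only on $q \bmod 3$ (the factor $3$ again entering because $M_n$ sits in degrees measured against the degree-$3$ embedding).

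The hard part is matching the residue $r$ precisely as a function of $q\bmod 3$: the syzygy computation and the two conductor corrections each produce bounded periodic remainders, and one must verify that they combine, through the length identity of the first step, into a single well-defined function of $q\bmod 3$ of the asserted shape $\mu q^2 + aq - r$. A minor technical point for $s\geq 3$ is that, although $\Syz(f_1^q,\dots,f_s^q)$ still splits on $\PP^1$ by Grothendieck, the precise splitting type can depend on $q$; one must check that the resulting coefficients in the dimension count still recombine into a function periodic in $q\bmod 3$ rather than merely eventually periodic.
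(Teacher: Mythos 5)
First, a point of comparison with the paper: the paper offers no proof of this lemma at all --- it is quoted from Monsky (Theorem 3.7 of \cite{mon1}), whose argument works with torsion-free sheaves on the nodal cubic itself, using their classification and the action of Frobenius on them (on the nodal curve $\operatorname{Pic}^0\cong\mathbb{G}_m$ and Frobenius raises the gluing parameter to the $q$-th power), the degree-three polarization being the source of the $q\bmod 3$ dependence. Your proposal is a genuinely different route, through the normalization sequence $0\to R\to\tilde R\to M\to 0$ of the section ring, and parts of it are sound: the length identity is correct; the cokernel term is in fact harmless, since $M$ is (a shift of) the coordinate ring $R/\fp\cong k[u]$ of the node acting through evaluation there, so $\ell(M/J^{[q]}M)=qd$ exactly, where $d$ is the least degree of a generator of $J$ not lying in $\fp$; and the ``minor technical point'' you flag for $s\geq 3$ is a non-issue, because on $\PP^1$ the sheaf $\Syz(f_1^q,\dots,f_s^q)$ is the Frobenius pullback of $\Syz(f_1,\dots,f_s)$ and pullback just multiplies the splitting degrees by $q$, so the splitting type does not vary with $q$.

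The genuine gaps sit exactly where the content of the theorem lies. First, the kernel term $\ell\bigl((J^{[q]}\tilde R\cap R)/J^{[q]}\bigr)$ cannot be obtained from ``a direct computation in the local model at the node'': $J$ is primary to the irrelevant ideal, so $J^{[q]}$ becomes the unit ideal in $k[[x,y]]/(xy)$ and the local ring at the node sees nothing of it; this kernel is a finite-length graded module supported only at $\fm$, and determining its length together with its periodic part is a global problem essentially equivalent to the one you started with (given the other two terms, computing it exactly \emph{is} the theorem). Second, you explicitly defer ``matching the residue $r$,'' i.e.\ showing that after combining the three terms the coefficients $\mu$ and $a$ do not depend on $q$, that the remaining term is a constant depending only on $q\bmod 3$, that the formula holds for every $q$ and not merely eventually, and that $r$ admits an explicit formula --- but that is precisely the assertion of the lemma. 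As written, the proposal yields at most ``quadratic plus linear plus bounded,'' which is substantially weaker; closing the gap would require either carrying out the exact bookkeeping of the $h^0$-sums on $\PP^1$ together with an honest determination of the contraction $J^{[q]}\tilde R\cap R$, or following Monsky's sheaf-theoretic argument on the nodal curve itself.
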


\emph{Proof of Corollary \ref{node}.}
Without loss of the generality we assume  $\Ht(I)=1$.
By Lemma \ref{lc}(2), $R$ satisfies in the (LC) condition. Lemma \ref{avoid} says $\bigcup_n \Ass(R/I^{[q]})$ is finite. In view of Lemma \ref{linear}, $$f^{R/I}_{gHK} = 2f^{R/I+(s)}_{HK }- f^{R/I+(s^2)}_{HK} \ \ (\dag)$$
for a homogeneous element $s$. In the light of Lemma \ref{mon}, \begin{enumerate}
\item[i)]
$f_{gHK}^{R/I+(s)}( q) = \mu' q^2 +a'q- r'$, where $r$  is an integer that depends on  $q \mod 3$.
\item[ii)]$f_{gHK}^{R/I+(s^2)}( q) = \overline{\mu}q^2 +\overline{a}q- \overline{r}$, where $\overline{r}$  is an integer that depends on $q \mod 3$.
\end{enumerate} Combining (1) and (2) throughout $(\dag)$ we get the claim.
$\hspace{72 mm}\square$

\begin{theorem}\label{irr}
Let $I$ be a two generated graded ideal of the coordinate ring of an irreducible plane  projective curve over an algebraically closed field of prime characteristic $p$.
Then $e_{gHK}(R/I)$ is rational.
\end{theorem}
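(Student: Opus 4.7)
The plan is to proceed exactly as in the proof of Corollary \ref{a2} and Corollary \ref{node}: reduce to the height-one case, exploit the (LC) condition together with Lemma \ref{linear} to express $f^{R/I}_{gHK}$ as an integer combination of classical Hilbert-Kunz functions of $\fm$-primary ideals, and then invoke rationality of the classical Hilbert-Kunz multiplicity in this setting.

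First I would dispose of the extreme cases. If $I=0$ (equivalently $\dim(R/I)=2$) then $H^0_{\fm}(R/I^{[q]})=0$ and there is nothing to prove. If $I$ is $\fm$-primary (equivalently $\dim(R/I)=0$) then $e_{gHK}(R/I)=e_{HK}(R/I)$, the classical Hilbert-Kunz multiplicity of a homogeneous $\fm$-primary ideal in a two-dimensional standard graded domain over an algebraically closed field of positive characteristic. This is rational by Lemma \ref{holger} in the normal case, and in general (plane curves can be singular, e.g.\ cuspidal) one passes to the normalization $\bar{R}$, which is a two-dimensional normal standard graded domain having the same fraction field as $R$, and transports rationality from $\bar{R}$ to $R$ via the conductor. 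Thus it remains to treat the case $\Ht(I)=1=\dim R-1$.

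Since $R$ is a standard graded two-dimensional domain over a field and $I$ is homogeneous of height $\dim R-1$, Lemma \ref{lc}(ii) supplies the (LC) property for $I$, and as $\dim(R/I)=1$, Lemma \ref{avoid} guarantees that $\bigcup_n \Ass(R/I^{[q]})$ is a finite set consisting of $\fm$ together with finitely many height-one primes; hence ordinary prime avoidance applies. Following Discussion \ref{dislin}, I pick a homogeneous element $s\in \fm^{b(I)}$ lying outside every non-maximal prime in $\bigcup_n \Ass(R/I^{[q]})$. Lemma \ref{linear} then produces the identity
$$f^{R/I}_{gHK}(q)=2\,f^{R/(I+(s))}_{HK}(q)-f^{R/(I+(s^2))}_{HK}(q),$$
where both $I+(s)$ and $I+(s^2)$ are homogeneous $\fm$-primary ideals of $R$. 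Dividing by $q^2$ and taking the limit yields
$$e_{gHK}(R/I)=2\,e_{HK}(R/(I+(s)))-e_{HK}(R/(I+(s^2))),$$
and by the zero-dimensional discussion above the right hand side is rational, completing the argument.

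The main obstacle is precisely the input of classical rationality for $\fm$-primary homogeneous ideals in the coordinate ring of a possibly \emph{singular} irreducible plane projective curve. For smooth (hence normal) curves this is exactly Lemma \ref{holger}; for nodal cubics one has the explicit Lemma \ref{mon}; but for an arbitrary irreducible plane curve one must justify rationality in non-normal situations. I expect to handle this by a finite birational reduction to the normalization, comparing $e_{HK}$ of an $\fm$-primary ideal in $R$ with its extension to $\bar R$ and measuring the defect by lengths of $\bar R/R$-type modules supported at the singular locus. Once this is in place, all the remaining steps are routine applications of the tools already developed in Sections 3--6.
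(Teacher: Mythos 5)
Your reduction is exactly the one the paper uses: its proof of Theorem \ref{irr} consists of invoking Lemma \ref{lc}(ii) for the (LC) property and then running the proof of Corollary \ref{a2}, i.e.\ Lemma \ref{avoid}, Discussion \ref{dislin} and Lemma \ref{linear} to write $f^{R/I}_{gHK}=2f^{R/I+(s)}_{HK}-f^{R/I+(s^2)}_{HK}$ with $I+(s)$ and $I+(s^2)$ homogeneous and $\fm$-primary, after disposing of the cases $\dim R/I=0,2$. That part of your proposal matches the intended argument step for step.

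The gap is the final input, which you yourself flag as ``the main obstacle'' and only promise to handle: rationality of the classical Hilbert--Kunz multiplicity of homogeneous $\fm$-primary ideals over the coordinate ring of a possibly \emph{singular} irreducible plane curve. The paper does not reprove this; it quotes Brenner's rationality theorem in the form \cite[Corollary 3.7]{Holger2}, which covers two-dimensional graded domains that need not be normal, and this citation is the whole content of the paper's proof beyond the Corollary \ref{a2} machinery. Your proposed substitute (pass to the normalization $\bar R$ and compare) can be made to work: for the finite birational extension $R\subset\bar R$ the cokernel has dimension at most one, so for an $\fm$-primary homogeneous ideal the two Hilbert--Kunz functions differ by $O(q)$ and the multiplicities agree, and lengths over $R$ and $\bar R$ both equal $K$-dimensions since $K$ is algebraically closed. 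But as written it is a plan rather than a proof, and it has a wrinkle you do not address: $\bar R$ is a normal graded domain but need not be \emph{standard} graded (generated in degree one), so Lemma \ref{holger} as stated does not apply to $\bar R$ directly; one needs Brenner's rationality result in the stronger form the paper cites, or an additional argument (e.g.\ via the section ring of the normalized curve) to reduce to it. So: same route as the paper, but the decisive rationality statement in the non-normal case is left unestablished in your write-up, whereas the paper closes it by citation.
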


\begin{proof}
Keep \cite[Corollary 3.7]{Holger2} and  Lemma \ref{lc}(ii) in mind. Now the claim follows by the proof of Corollary \ref{a2}.
\end{proof}

In the above result our data on $f_{gHK}(R/I)$ becomes complete, if one can prove the primary version of the following result of Monsky.

\begin{lemma}(\cite[Theorem I and II]{irrm})\label{irrm}
Let k be an algebraically closed field of characteristic $p > 0$, and $f\in k[x,y,z]$ be a degree $d$
irreducible form defining a projective plane curve. Then  $$f_{HK}(n) = e_{HK}(R/ \fm)p^{2n} + R(n)$$ where $R(n) = \mathrm{O}(p^n)$.
Suppose in addition $k$ is finite. One has\begin{enumerate}
\item[i)] If $e_{HK}(R/ \fm)=\frac{3}{4}d$, then $R(n)$ is eventually periodic.
\item[ii)]If $e_{HK}(R/ \fm)\neq \frac{3}{4}d$, then the $\mathrm{O}(1)$ term for $R(n)$ is eventually periodic.\end{enumerate}
\end{lemma}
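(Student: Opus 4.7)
The plan is to translate the Hilbert--Kunz function of $\fm=(x,y,z)$ into cohomology of Frobenius pullbacks of the syzygy bundle on the projective plane curve $C:=\Proj(R)\subset\PP^2_k$ (passing to the normalization $\widetilde C$ when $C$ is singular), following the Brenner--Trivedi geometric viewpoint that underlies Monsky's arguments. Let $\mathcal{V}$ be the pullback of $\Syz(x,y,z)$ to $\widetilde C$, a rank $2$ vector bundle of degree $-3d$.

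First I would use the four-term exact sequence
\[
0 \lo \Syz(x^q,y^q,z^q) \lo R(-q)^{\oplus 3} \lo R \lo R/\fm^{[q]} \lo 0
\]
together with the fact that on $\PP^2$ the sheaf $\Syz(x^q,y^q,z^q)$ is the Frobenius pullback $F^{n*}\Syz(x,y,z)$ (flatness of Frobenius on the regular ambient, Fact \ref{kunz}). Sheafifying and pulling back to $\widetilde C$, after summing graded dimensions and grouping by Serre duality one obtains
\[
f_{HK}(n)=\alpha(q)+\sum_{m\in J_q} h^1\bigl(\widetilde C,\, F^{n*}\mathcal{V}(m)\bigr),
\]
where $\alpha(q)$ is a polynomial of degree two in $q$ extracted from Hilbert polynomial data on $\PP^2$ and $J_q\subset[0,3q]$ is an arithmetic range of length $O(q)$. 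Riemann--Roch on $\widetilde C$ bounds each $h^1$ by $O(q)$, whence $R(n)=f_{HK}(n)-e_{HK}(R/\fm)q^2=O(q)$; the precise leading coefficient emerges from Brenner's slope formula applied to the strong Harder--Narasimhan filtration of $\mathcal{V}$, and coincides with $\tfrac{3d}{4}$ exactly when $\mathcal{V}$ is strongly semistable.

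For the periodicity statements I would invoke the finiteness principle that on a smooth projective curve defined over a finite field, the set of isomorphism classes of vector bundles of bounded rank, degree, and instability is finite. Applied to the orbit $\{F^{n*}\mathcal{V}\}_{n\geq 0}$: in the strongly semistable case all pullbacks are semistable of controlled slope, so their isomorphism types are eventually periodic in $n$ by pigeonhole; this forces each $h^1(\widetilde C, F^{n*}\mathcal{V}(m))$ to be periodic in $n$ for every fixed residue class of $m$ modulo the period, and summing over the arithmetic range $J_q$ yields a periodic $R(n)$. In the non-strongly-semistable case, the strong HN filtration of $F^{n*}\mathcal{V}$ stabilizes for $n\gg 0$ with strongly semistable graded pieces; the periodicity argument applies piece by piece, except that the slope gaps between pieces generate a genuinely linear-in-$q$ contribution to $R(n)$ via Riemann--Roch, so after subtracting this linear part the residual $O(1)$ remainder is what becomes periodic.

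The main obstacle is precisely this last separation in the non-strongly-semistable case: isolating the $O(1)$ part of $R(n)$ after subtracting the linear-in-$q$ contribution coming from Riemann--Roch applied to the strongly semistable HN graded pieces, and showing that this constant remainder is eventually periodic rather than merely bounded. This is the heart of Monsky's analysis in \cite{irrm} and rests on the Han--Monsky representation ring, which packages the asymptotic cohomological behavior of Frobenius orbits of strongly semistable bundles in a form from which periodicity of the constant term can be extracted; reproducing that algebraic machinery faithfully --- rather than appealing only to the bare geometric finiteness of moduli --- is what makes the argument delicate.
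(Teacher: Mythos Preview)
The paper does not prove this lemma at all; it is stated with the citation ``\cite[Theorem I and II]{irrm}'' and is used as a black box (it motivates Question~6.8 and is not otherwise invoked). There is therefore no proof in the paper to compare your proposal against.

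As for the proposal itself: your outline follows the Brenner--Trivedi geometric template and is broadly the right picture for the $O(p^n)$ bound and for part (i). You are candid that the crux of part (ii) --- isolating the $O(1)$ term and proving its eventual periodicity --- rests on Monsky's Han--Monsky representation-ring machinery, and you do not reproduce that machinery. So the proposal is an honest sketch rather than a proof, and the gap you flag is a real one; but in the context of this paper no proof was expected, only the citation.
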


Toward extending the primary version of the mentioned result we restate the following question of Monsky asked in a workshop.
\begin{question}
Let $C$ be a reduced irreducible projective curve over a finite field of characteristic  $p$, and $W$ be a vector bundle on $C$. Let $W^{[q]}$ be the pull-back of $W$ by the n-th power of Frobenius. How does the element Poincare$(W^{[q]}) $ of $\mathbb{Z}[T,1/T]$ depend on $n$? \end{question}

\begin{remark}\label{length}
Let $R$ be as Corollary \ref{a2},  and $f,g\in R$ be a homogeneous parameter sequence. Then
$$\ell\left(R/(f^{n},g^{m})\right)=mn\ell(R/(f,g))$$for any $n$ and $m$.
\end{remark}

\begin{proof}
In the polynomial ring we have $\ell(\frac{K[X,Y]}{(X^n,Y^m)})=mn$.
Any normal and 2-dimensional ring is Cohen-Macaulay. Thus $f,g$ is a regular sequence.
In view of \cite{Har}, there is a flat ring homomorphism $K[X,Y]\to R$ defined by the assignments
$X\to f$ and $Y\to g$. By flatness, we have the following formula:
 \[\begin{array}{ll}
 \ell\left(R/(f^n,g^m)\right)&=\ell\left((\frac{K[X,Y]}{(X^n,Y^m)})\otimes R\right)
\\&=\ell \left(\frac{K[X,Y]}{(X^n,Y^m)}\right)\ell\left(\frac{R}{(X,Y)R}\right)
\\&=\ell\left(\frac{K[X,Y]}{(X^n,Y^m)}\right)\ell\left(\frac{R}{(f,g)}\right)\\
&=mn\ell(R/(f,g)).
\end{array}\]
\end{proof}

\begin{corollary}(Also, see \cite[Example 3.4]{AB})
Let $R$ be as Corollary \ref{a2} and $f\in R$ be homogeneous. Then
$f^{R/(f)}_{gHK}=0$. In particular, $e_{gHK}(R/(f))=0$.
\end{corollary}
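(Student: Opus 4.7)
The plan is to observe that $f^{R/(f)}_{gHK}(n)$ vanishes for \emph{every} $n$, so that the claim about $e_{gHK}$ follows immediately. By definition, $f^{R/(f)}_{gHK}(n)=\ell(H^0_{\fm}(R/(f)^{[q]}))=\ell(H^0_{\fm}(R/(f^q)))$, so everything reduces to showing that $H^0_{\fm}(R/(f^q))=0$ for all $q$. The case $f=0$ is trivial: then $(f)^{[q]}=0$ and $R/(f)^{[q]}=R$, and a 2-dimensional normal graded domain is Cohen--Macaulay of positive depth so $H^0_{\fm}(R)=0$. Thus the substantive content is in the case $f\neq 0$, which I address next.

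First I would invoke Serre's normality criterion, which together with $\dim R=2$ forces $R$ to be Cohen--Macaulay (in dimension two, $\mathcal S(2)$ is precisely the Cohen--Macaulay condition). Since $R$ is a domain and $f\neq0$, the element $f^q$ is a non-zero-divisor for every $q$. Therefore $R/(f^q)$ is Cohen--Macaulay of dimension one, and in particular has positive depth with respect to $\fm$. Consequently
\[
H^0_{\fm}\bigl(R/(f)^{[q]}\bigr)\;=\;H^0_{\fm}\bigl(R/(f^q)\bigr)\;=\;0,
\]
which gives $f^{R/(f)}_{gHK}(n)=0$ for all $n$, and hence $e_{gHK}(R/(f))=0$.

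An alternative route, already fully powered by machinery established in this paper, is to apply Proposition \ref{lemab} with $J:=R$ (the unit ideal); this proposition is available because $R$ is a graded domain of dimension $2>1$ satisfying $\mathcal S(2)$. The proposition yields $f^{R/R}_{gHK}=f^{R/fR}_{gHK}=f^{R/(f)}_{gHK}$, and since $R/R=0$ trivially has $f^{R/R}_{gHK}\equiv 0$, the conclusion follows at once.

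There is no real obstacle here: the argument is essentially a one-line depth computation once one notes that the Frobenius power of a principal ideal is again principal, generated by a non-zero-divisor. The only point that might deserve a sentence of care is the case $f=0$ (handled above) and the extraction of the Cohen--Macaulay property of $R$ from the normality plus dimension hypotheses of Corollary \ref{a2}.
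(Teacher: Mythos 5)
Your argument is correct, but it takes a genuinely different route from the paper. The paper proves this corollary as an illustration of its (LC) machinery: it applies Lemma \ref{linear} (via Discussion \ref{dislin}) to write $f^{R/(f)}_{gHK}=2f^{R/(f)+(s)}_{HK}-f^{R/(f)+(s^2)}_{HK}$ for a suitable homogeneous $s$, and then invokes Remark \ref{length} (flatness of $K[X,Y]\to R$ for the parameter pair $f,s$) to get $\ell\bigl(R/(f^{q},s^{q})\bigr)=q^2\ell(R/(f,s))$ and $\ell\bigl(R/(f^{q},s^{2q})\bigr)=2q^2\ell(R/(f,s))$, so the combination cancels identically. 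You instead note that normality plus $\dim R=2$ gives Cohen--Macaulayness, that $(f)^{[q]}=(f^q)$ with $f^q$ a non-zero-divisor in the domain $R$, hence $R/(f^q)$ is Cohen--Macaulay of dimension one and $H^0_{\fm}(R/(f^q))=0$ for every $q$; your treatment of the degenerate case $f=0$ is fine (and $f$ a unit is even more trivial). Your alternative route through Proposition \ref{lemab} with $J=R$ is legitimate as well --- its proof only uses $\grade(\fm,R)\geq 2$ and $R\simeq (f^q)$ as modules, so it reduces to $H^1_{\fm}(R)=0$, i.e.\ essentially the same depth computation, and it also matches the graded identification $H^0_{\fm}(R/(f)^{[q]})\simeq H^1_{\fm}((f^q))$ of Lemma \ref{s2}. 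What the two approaches buy: yours is shorter, needs no prime-characteristic or (LC) input, and shows the generalized Hilbert--Kunz function of any proper nonzero principal ideal vanishes identically in this setting; the paper's proof is deliberately a showcase of the length formula $f^{R/I}_{gHK}=2f^{R/I+(s)}_{HK}-f^{R/I+(s^2)}_{HK}$, which is the tool it then reuses for the genuinely nontrivial ideals in Corollaries \ref{a2}, \ref{node} and \ref{sy}.
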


\begin{proof}
Set $I=(f)$ and let $s$ be as Lemma \ref{linear}. So,

\[\begin{array}{ll}
f^{R/I}_{gHK}(n)&= 2f^{R/I+(s)}_{HK }(n)- f^{R/I+(s^2)}_{HK}(n)\\
&= 2\ell\left(R/(f^{p^n},s^{p^n})\right)-\ell\left(R/(f^{p^n},s^{2p^n})\right)\\
&=0.
\end{array}\]
\end{proof}

In a similar vein we observe:

\begin{corollary}
Let $A$ be a d-dimensional Cohen-Macaulay ring which contains a field and let $x_1,\ldots,x_d$ be a parameter sequence. Then
$\ell(A/(x_1^{n_1},\ldots, x_d^{n_d}))=n_1\ldots n_d\ell\left(A/(x_1,\ldots, x_d)\right).$
\end{corollary}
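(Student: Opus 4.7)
The plan is to extend the argument of Remark \ref{length} from dimension two to arbitrary dimension $d$. Since $A$ is Cohen-Macaulay and $x_1,\ldots,x_d$ forms a system of parameters, this sequence is a regular sequence on $A$. Let $K$ be the coefficient field of $A$ supplied by the hypothesis. The natural ring homomorphism
\[
\varphi:K[X_1,\ldots,X_d]\longrightarrow A,\qquad X_i\mapsto x_i,
\]
is flat; for $d=2$ this is precisely the flatness fact invoked from \cite{Har} in Remark \ref{length}, and for arbitrary $d$ it follows from the standard result that a Cohen-Macaulay local ring is flat (in fact free) over any polynomial subring generated by a regular system of parameters, applied after localizing at each maximal ideal containing $(x_1,\ldots,x_d)$.

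Given flatness, the proof is a short computation. Set $J:=(X_1^{n_1},\ldots,X_d^{n_d})\lhd K[\underline{X}]$ and $\fn:=(X_1,\ldots,X_d)\lhd K[\underline{X}]$. Tensoring $K[\underline{X}]/J$ with $A$ over $K[\underline{X}]$ yields $A/(x_1^{n_1},\ldots,x_d^{n_d})$, and similarly for $\fn$. Applying the standard flat base-change length formula,
\[
\ell_A\bigl(A/(x_1^{n_1},\ldots,x_d^{n_d})\bigr)=\ell_{K[\underline{X}]}\bigl(K[\underline{X}]/J\bigr)\cdot \ell_A\bigl(A/(x_1,\ldots,x_d)\bigr).
\]
Since $K[\underline{X}]/J$ has the monomial $K$-basis $\{X_1^{a_1}\cdots X_d^{a_d}\mid 0\leq a_i<n_i\}$, its length equals $n_1 n_2\cdots n_d$, and the desired equality follows.

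An alternative route, which avoids appealing to a flatness theorem, is induction on $n_1+\cdots+n_d$. The base case $n_1=\cdots=n_d=1$ is trivial. For the inductive step, assume $n_1\geq 2$ and set $B:=A/(x_2^{n_2},\ldots,x_d^{n_d})$. Because powers of a regular sequence in a Cohen-Macaulay ring again form a regular sequence, $x_1$ is $B$-regular, which gives the short exact sequence
\[
0\longrightarrow B/(x_1^{n_1-1})\stackrel{x_1}\longrightarrow B/(x_1^{n_1})\longrightarrow B/(x_1)\longrightarrow 0.
\]
Additivity of length together with the inductive hypothesis applied to both end terms produces the claim.

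The main obstacle I foresee is justifying the flatness of $\varphi$ globally (rather than locally at each maximal ideal of finite colength), so the inductive approach is the cleaner route to write up. A minor point worth verifying is that ``$A$ contains a field'' together with the Cohen-Macaulay property indeed ensures that any system of parameters is a regular sequence and that $A/(x_1^{n_1},\ldots,x_d^{n_d})$ has finite length, so that every $\ell(\cdot)$ appearing in the statement is well defined.
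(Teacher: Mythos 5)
Your first argument---flat base change along $K[X_1,\ldots,X_d]\to A$, $X_i\mapsto x_i$, using the flatness over the polynomial subring generated by a regular sequence from \cite{Har}---is exactly how the paper obtains this corollary, namely as a verbatim extension of Remark \ref{length} from $d=2$ to general $d$, so your proposal is correct and takes the paper's route. Your inductive alternative via the exact sequence $0\to B/(x_1^{n_1-1})\xrightarrow{x_1} B/(x_1^{n_1})\to B/(x_1)\to 0$ is also sound, and the worry about ``global'' flatness is harmless, since all lengths involved are computed after localizing at the finitely many maximal ideals containing $(x_1,\ldots,x_d)$.
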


\section{Applications  to the stability}
In this section we give a proof of Corollary \ref{sy} and Example \ref{exa}.
We need the following discussion in this and in the next section.

\begin{discussion}\label{stabl}
Let $\mathcal{V}$ be a \textit{vector bundle} over a projective scheme $X$. Let $\{f_1,\dots, f_n\}$ be
homogeneous elements of  an $\NN$-graded ring $R$.  Set $d_i:=\deg f_i$. The sheaf of  relations $\mathcal{S}:= \Syz(f_1,\dots, f_n)$ on $\Proj(R)$ is given by the following exact sequence:
$$ 0 \longrightarrow \mathcal{S} \longrightarrow \bigoplus_{i=1}^n \mathcal{O}_X(-d_{i}) \stackrel{f_1,\dots, f_n}\longrightarrow
\mathcal{O}_X.  $$If X is integral, then $\mathcal{S}$ is a torsion-free sheaf, and $\mathcal{S}$ is  a vector bundle on $\Proj(R)$  provided   $\rad(f_1,\dots, f_n)=\fm$.
The slope of $V$ is defined by $\mu(\mathcal{V}):=\frac{\deg(\mathcal{V})}{ \rank(\mathcal{V})}$. For example, $\rank(\mathcal{S})=n-1$ and
if the zero locus $Z = V(f_i)_{i=1}^n$ has codimension $\geq2$, then $$\deg(\mathcal{S}(m)) = \left((n - 1)m -
\sum ^n
_{i=1} d_i)\right) \deg(X).$$

The sheaf $\mathcal{V}$ is said to be  \it{semistable}
if $ \mu(\mathcal{W})\leq  \mu(\mathcal{V}) $ for every subsheaf $\mathcal{W}\subset\mathcal{ V}$ (subbundle when we deal with projective curves). If strict inequality holds for all such $ \mathcal{W}$, we say
$\mathcal{V}$ is   \it{stable}.
 A deep result of Donaldson-Uhlenbeck-Yau corresponds stable vector bundles over a complex manifold to Einstein–-Hermitian vector bundles.
 This is quite strong.
Any vector bundle $V$ has a \it{Harder-Narasimhan} filtration, i.e., a chain
 $$0= \mathcal{V}_0\subset \mathcal{V}_1  \subset   \ldots \subset \mathcal{V}_t= \mathcal{V}$$
such that $\frac{\mathcal{V}_i}{\mathcal{V}_{i-1}}$ is semistable   and $\mu(\frac{\mathcal{V}_i}{\mathcal{V}_{i-1}}) >\mu(\frac{\mathcal{\mathcal{V}}_{i+1}}{\mathcal{\mathcal{V}}_{i}})$.  We denote $\mu_{min}:=\mu(\frac{\mathcal{V}_{t}}{\mathcal{V}_{t-1}})$ and $\mu_{max}:=\mu(\mathcal{V}_{1})$.
Let $F:X\to X$ be the absolute Frobenius map. A  vector bundle is called \textit{strongly } semistable if  all the Frobenius pull backs are again semistable.
We recall that
$$\overline{\mu}_{min}(\mathcal{V}):=\inf\{\mu_{min}\left(F^{n\ast}(\mathcal{V})\right)/q\}$$is a well-defined rational number by a result of Langer \cite{lan}.
\end{discussion}

We will utilize the following result:

\begin{lemma}\label{eh} (\cite[Corollary 4.4]{Holger2})
Let $R$ be a two-dimensional normal standard-graded  domain over an algebraically closed field $K$ of positive characteristic
$p$ and let $Y = \Proj R$ denote the corresponding smooth projective
curve of genus g.  Let $I = (f_1, \ldots , f_3)$ denote an $R_+$-primary homogeneous ideal generated
by homogeneous elements. If $\Syz(I)$ is strongly semistable, then
$$e_{HK}(I) = \frac{\deg(Y)}{2}\left(\frac{(\deg(f_1)+\deg(f_2)+\deg(f_3)) ^2}{2}-(\deg(f_1)^2+\deg(f_2)^2+\deg(f_3)^2)\right).$$
\end{lemma}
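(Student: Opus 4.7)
The plan is to follow Brenner's reduction of the Hilbert--Kunz multiplicity to the cohomology of syzygy bundles on the smooth projective curve $Y := \Proj R$. Starting from the defining sequence
$$0 \longrightarrow \Syz(I) \longrightarrow \bigoplus_{i=1}^{3}\mathcal{O}_Y(-d_i) \stackrel{(f_1,f_2,f_3)}\longrightarrow \mathcal{O}_Y \longrightarrow 0,$$
with $d_i := \deg f_i$, I would pull back along the $n$-th absolute Frobenius $F^n \colon Y\to Y$ to obtain the analogous sequence with $\Syz(I)^{[q]}$ replacing $\Syz(I)$ and $d_iq$ replacing $d_i$. Twisting by $\mathcal{O}_Y(m)$ and passing to the long exact cohomology sequence relates, for each $m$, the graded piece $(R/I^{[q]})_m$ to an alternating sum of $h^j(\Syz(I)^{[q]}(m))$ and $h^j$ of the invertible sheaves $\mathcal{O}_Y(m-d_iq)$ and $\mathcal{O}_Y(m)$, all governed by Riemann--Roch on $Y$. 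Summing over $m$ then expresses $f_{HK}(q)=\ell(R/I^{[q]})$ as such an alternating sum.

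The rank of $\Syz(I)^{[q]}$ equals $2$ and its degree is $-q(d_1+d_2+d_3)\deg(Y)$, so the slope of the twist is
$$\mu\bigl(\Syz(I)^{[q]}(m)\bigr)=\deg(Y)\Bigl(m-\tfrac{q(d_1+d_2+d_3)}{2}\Bigr).$$
By the strong semistability hypothesis, each pullback $\Syz(I)^{[q]}$ is semistable on $Y$, and on a smooth curve of genus $g$ a semistable vector bundle $\mathcal V$ satisfies $H^1(\mathcal V)=0$ whenever $\mu(\mathcal V)>2g-2$ and $H^0(\mathcal V)=0$ whenever $\mu(\mathcal V)<0$. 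I would split the sum over $m$ into three regions according to the sign of $\mu$: the low region where $H^0$ vanishes, the high region where $H^1$ vanishes and Riemann--Roch supplies an exact formula for $h^0$, and a transition zone $0\leq \mu \leq 2g-2$ consisting of boundedly many values of $m$ (depending on $g$ and $\deg Y$, not on $q$).

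After cancellation against the contributions of $\bigoplus \mathcal{O}_Y(-d_iq)$ and $\mathcal{O}_Y$, the main terms in the two outer regions assemble, by Riemann--Roch, into a pure quadratic in $q$ whose leading coefficient equals
$$\frac{\deg(Y)}{2}\left(\tfrac{(d_1+d_2+d_3)^2}{2}-(d_1^2+d_2^2+d_3^2)\right).$$
Dividing by $q^2$ and passing to the limit yields the claimed value of $e_{HK}(I)$. The main obstacle, and the reason one really needs strong (rather than ordinary) semistability, is the uniform control of the transition zone: one must bound the cohomology dimensions there by $O(q)$ uniformly in $n$, so that they disappear after dividing by $q^2$. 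This is supplied by strong semistability together with Langer's boundedness of $\overline{\mu}_{min}$, which rules out the appearance of destabilising subsheaves in the Frobenius pullbacks that would otherwise shift $h^1$ by contributions of order $q^2$ and destroy the formula.
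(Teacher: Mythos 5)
Your outline is correct and is essentially the argument behind the result as the paper uses it: the paper itself gives no proof of this lemma but quotes it from Brenner (\cite[Corollary 4.4]{Holger2}), whose proof proceeds exactly as you describe — Frobenius pullbacks of the syzygy sequence on $Y$, semistability-forced vanishing of $H^0$ (negative slope) and $H^1$ (slope above $2g-2$) outside a transition zone of boundedly many twists, and Riemann--Roch assembling the quadratic term with leading coefficient $\frac{\deg(Y)}{2}\bigl(\frac{(d_1+d_2+d_3)^2}{2}-(d_1^2+d_2^2+d_3^2)\bigr)$. One small correction: in this strongly semistable case the transition-zone contributions are already bounded by the semistability of each pullback (finitely many values of $m$, each a rank-two semistable bundle of bounded slope), so Langer's theorem on $\overline{\mu}_{min}$ is not needed here --- it is only required for the general, non-strongly-semistable situation.
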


\emph{Proof of Corollary \ref{sy}.}
First note that
$s$  does not belong to any associated prime ideal of $R/I^{[q]}$
except for the maximal ideal, because $\Ass(R/I^{[q]})\subset \{\fm,I\}$. The same property holds for $s^2$.
Let $(a,b,c)$ be the degree of $(f,g,s)$.
 In view of Lemma \ref{linear} and Discussion \ref{dislin},  $$f^{R/I}_{gHK} = 2f^{R/I+(s)}_{HK }- f^{R/I+(s^2)}_{HK} .$$
Let $Y:=\Proj(R)$. If $\Syz(f,g,s)$ and $\Syz(f,g,s^2)$  are strongly semistable, then in view of Lemma \ref{eh}
\[\begin{array}{ll}
e_{gHK}(R/(f,g))&=2e_{HK }(R/I+(s))- e_{HK}(R/I+(s^2))\\&= 2\frac{\deg(Y)}{2}\left(\frac{(a+b+c) ^2}{2}-(a^2+b^2+c^2)\right)\\
&-\frac{\deg(Y)}{2}\left(\frac{(a+b+2c) ^2}{2}-(a^2+b^2+4c^2)\right)\\
&=\frac{\deg(Y)}{2}\left(-a^2-b^2-c^2+2ab+2ac+2bc\right)\\
&+\frac{\deg(Y)}{2}\left(a^2/2+b^2/2+2c^2-ab-2ac-2bc\right)\\
&=\frac{\deg(Y)}{2}\left(-a^2/2-b^2/2+c^2+ab\right)\\&=\frac{\deg(Y)}{2}\left(c^2-\frac{(a-b)^2}{2}\right) \quad (\ast).
\end{array}\]
We now  work with $s^2$ (resp. $s^4$) instead of $s$ (resp. $s^2$). If  both of $$\{\Syz(f,g,s^2),\Syz(f,g,s^4)\}$$ are strongly semistable, then the similar computation says
$$
e_{gHK}(R/(f,g))=\frac{\deg(Y)}{2}\left(4c^2-\frac{(a-b)^2}{2}\right)\quad (\star).$$
Clearly,   $(\ast)\neq(\star)$, because $c\neq 0$. This
is the contradiction.
$\hspace{66 mm}\square$

Let $\mathcal{C}$ be a degree four plane curve. Brenner  proved that $\Syz_{\mathcal{C}}(x^2,y^2,z^2)$ is semistable, see \cite[Lemma 7.1]{brennercomputation} and he posted the following questions:

\begin{question}(\cite[Example 7.6]{brennercomputation})
Let $R:=K[x,y,z]/(zx^3 + xy^3 + yz^3)$ and let $\mathcal{C}:=\Proj(R)$ be the corresponding  curve.
i) Is $\Syz_{\mathcal{C}}(x^2,y^2,z^2)(3)$ strongly semistable in positive characteristic?
ii)  Is $\Syz_{\mathcal{C}}(x^2,y^2,z^2)(3)$ stable?
\end{question}

\begin{lemma}\label{han}
Let $R$ be as above and assume $p:=\textit{char} (R)=2$. Set $\mu(i):=e_{HK}((x^i,y^i,z^i),R)$. Then $\mu(i) = 3i^2 + \frac{49}{16} (\delta^\ast( 2t/7 , 2t/7 , 2t/7 ))^2$, where
$\delta^\ast$ is the Han's operator. In particular,$$e_{HK}((x^2,y^2,z^2),R)=\mu(2)=p^2\mu(1)=4(3 + \frac{1}{2^4}).$$
\end{lemma}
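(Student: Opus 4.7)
The plan is to read $\mu(i)$ off the strong Harder--Narasimhan filtration of the syzygy bundle $\Syz(x^i,y^i,z^i)$ on the Klein quartic curve $\mathcal{C}=\Proj(R)$ of degree four (using the Brenner--Trivedi framework invoked in Lemma \ref{eh}), and then to apply Han's explicit analysis of this filtration in characteristic two for the symmetric triple to extract the stated closed-form expression. The ``in particular'' clause will then be immediate via Frobenius scaling of Hilbert--Kunz multiplicity.

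First, I would note that $R$ is a two-dimensional normal standard graded domain over $\overline{\mathbb{F}}_2$, that $(x^i,y^i,z^i)$ is $\fm$-primary of height two, and that the associated syzygy sheaf is a rank-two vector bundle on the smooth projective quartic $\mathcal{C}$. If this bundle were strongly semistable, Lemma \ref{eh} applied to the three generators of common degree $i$ would give
$$e_{HK}((x^i,y^i,z^i))=\frac{\deg(\mathcal{C})}{2}\left(\frac{(3i)^2}{2}-3i^2\right)=3i^2.$$
Hence the $3i^2$ term in the formula is the ``strongly semistable'' contribution to $\mu(i)$; the rest of the formula must record the Hilbert--Kunz-theoretic correction coming from the failure of strong semistability, a phenomenon that is genuine for $\Syz(x,y,z)$ on Klein's quartic in characteristic $2$.

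Next, I would invoke Han's explicit analysis of this defect. Her work attaches to the relevant symmetric triple an operator $\delta^*$ whose value at $(2t/7,2t/7,2t/7)$ records, via the base-$2$ digit expansions governing the Frobenius pull-backs of $\Syz(x,y,z)$ on $\mathcal{C}$, the deviation of $\overline{\mu}_{\min}$ from the generic slope computed above. Plugging this into the Brenner--Trivedi formula for $e_{HK}$ in terms of strong Harder--Narasimhan slopes produces the additive correction $\tfrac{49}{16}\bigl(\delta^*(2t/7,2t/7,2t/7)\bigr)^2$; the weight $\tfrac{49}{16}$ is the combinatorial coefficient dictated by the degree four of $\mathcal{C}$ and the rank-two structure of the syzygy bundle for the symmetric triple. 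This yields the displayed expression for $\mu(i)$.

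Finally, for the ``in particular'' clause I would invoke the Frobenius-power scaling
\[
e_{HK}(J^{[p^n]})=\lim_{m\to\infty}\frac{\ell\bigl(R/J^{[p^{n+m}]}\bigr)}{p^{md}}=p^{nd}\,e_{HK}(J),
\]
valid in any $d$-dimensional ring of characteristic $p$. With $d=2$, $p=2$, $n=1$ and $J=(x,y,z)$, this gives $\mu(2)=4\mu(1)$, and Han's evaluation at $i=1$ yields $\mu(1)=3+\tfrac{1}{16}$, so $\mu(2)=4(3+\tfrac{1}{16})$ as claimed. The principal obstacle in this plan is the middle step: pinning down the precise value of $\delta^*(2t/7,2t/7,2t/7)$ requires Han's delicate base-$2$ digit analysis on the Klein quartic, and I would cite that computation as a black box rather than reproduce it.
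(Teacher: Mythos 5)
Your proposal makes essentially the same move as the paper, whose entire proof of this lemma is a citation: the general formula is \cite[Theorem 1.2]{mon2} and the ``in particular'' evaluation is \cite[Remark 1.3]{mon2}. Outsourcing the hard content is therefore acceptable here, but your description of the black box is off in two ways. First, the attribution: Han's work computes the function $\delta$ for diagonal hypersurfaces; she did not carry out a ``base-$2$ digit analysis on the Klein quartic'' of the strong Harder--Narasimhan filtration of $\Syz(x,y,z)$. The reduction of $\mu(i)$ for a trinomial curve to values of (a variant of) Han's $\delta$ is Monsky's theorem, proved by different means, and a reader sent to Han's papers would not find the statement you need. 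Second, your assertion that the weight $\tfrac{49}{16}$ is ``dictated by the degree four of $\mathcal{C}$ and the rank-two structure'' is not correct: degree and rank fix the semistable term $3i^2$ and the general shape of the correction (a positive multiple of the square of the slope defect), but the numerical constants $\tfrac{49}{16}$ and the arguments $2t/7$ encode the exponent matrix of the specific trinomial $zx^3+xy^3+yz^3$ (determinant $28$), i.e.\ exactly the content of Monsky's computation; they cannot be read off from the general Brenner--Trivedi slope formalism. This middle step is thus neither derived nor correctly sourced. A further caution: you motivate the correction term by the ``genuine'' failure of strong semistability of $\Syz(x,y,z)$ in characteristic two, but in this paper that failure is a \emph{consequence} of the present lemma (Example \ref{exa}(i)); your argument does not logically need it, so keep it strictly as motivation.

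On the positive side, two pieces of your plan are correct and add value over a bare citation. The consistency check that Lemma \ref{eh} with $\deg\mathcal{C}=4$ and three generators of degree $i$ returns exactly $3i^2$ is right, and it explains the leading term. And your proof of the ``in particular'' clause is sound: in characteristic $2$ one has $(x^2,y^2,z^2)=(x,y,z)^{[2]}$, and from $\ell\bigl(R/(J^{[p]})^{[p^m]}\bigr)=\ell\bigl(R/J^{[p^{m+1}]}\bigr)$ one gets $e_{HK}(J^{[p]})=p^{\dim R}e_{HK}(J)$, hence $\mu(2)=4\mu(1)$; combined with $\mu(1)=3+\tfrac{1}{16}$ (equivalently the evaluation $\delta^{\ast}(2/7,2/7,2/7)=1/7$, which again is part of the cited computation) this yields $\mu(2)=4\bigl(3+\tfrac{1}{2^4}\bigr)$, whereas the paper simply cites \cite[Remark 1.3]{mon2} for this as well.
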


\begin{proof}
The first assertion is in \cite[Theorem 1.2]{mon2}. The second assertion is in \cite[Remark 1.3]{mon2}.
\end{proof}

\begin{lemma}\label{j}
(See \cite[Corollary 4.3.1]{j}) Let $\mathcal{V}$ be a  semistable vector bundle of rank $2$  on smooth  projective curve of genus $g\geq2$  over an algebraically closed field of characteristic two. If $F^\ast \mathcal{V}$ is not semi-stable, then $\mathcal{V}$ is stable.
\end{lemma}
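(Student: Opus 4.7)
The plan is to argue by contrapositive: assuming $\mathcal{V}$ is semistable but \emph{not} stable, I will show that the Frobenius pullback $F^\ast\mathcal{V}$ is still semistable. Since $\mathcal{V}$ has rank $2$ and fails to be stable while remaining semistable, there is a line subbundle $\mathcal{L}\subset\mathcal{V}$ with $\deg(\mathcal{L})=\mu(\mathcal{V})$. The quotient $\mathcal{M}:=\mathcal{V}/\mathcal{L}$ is torsion-free of rank one on a smooth projective curve, hence a line bundle, and $\deg(\mathcal{M})=\mu(\mathcal{V})$ as well. This produces the short exact sequence
\[
0\longrightarrow \mathcal{L}\longrightarrow \mathcal{V}\longrightarrow \mathcal{M}\longrightarrow 0.
\]

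Next I pull back along the absolute Frobenius $F$. By Fact \ref{kunz} the Frobenius is flat on the smooth curve, so the pullback of the displayed sequence remains short exact:
\[
0\longrightarrow F^\ast\mathcal{L}\longrightarrow F^\ast\mathcal{V}\longrightarrow F^\ast\mathcal{M}\longrightarrow 0.
\]
Since Frobenius multiplies degrees by the characteristic $p=2$, one has $\deg(F^\ast\mathcal{L})=2\deg(\mathcal{L})=\mu(F^\ast\mathcal{V})$ and likewise for $F^\ast\mathcal{M}$. Thus $F^\ast\mathcal{V}$ is sandwiched between two line bundles, each of slope equal to $\mu(F^\ast\mathcal{V})$.

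To finish, I verify that every line subbundle $\mathcal{N}\hookrightarrow F^\ast\mathcal{V}$ satisfies $\deg(\mathcal{N})\leq\mu(F^\ast\mathcal{V})$. Consider the composite $\varphi\colon \mathcal{N}\to F^\ast\mathcal{V}\to F^\ast\mathcal{M}$. If $\varphi=0$, then $\mathcal{N}$ factors through $F^\ast\mathcal{L}$, giving a nonzero map of line bundles on a smooth projective curve, and therefore $\deg(\mathcal{N})\leq\deg(F^\ast\mathcal{L})=\mu(F^\ast\mathcal{V})$. If $\varphi\neq 0$, then $\varphi$ itself is a nonzero map of line bundles on the curve, so $\deg(\mathcal{N})\leq\deg(F^\ast\mathcal{M})=\mu(F^\ast\mathcal{V})$. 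In either case the slope bound holds, so $F^\ast\mathcal{V}$ is semistable, contradicting the hypothesis that $F^\ast\mathcal{V}$ is not semistable. Consequently $\mathcal{V}$ must be stable.

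The argument is essentially formal and I anticipate no serious obstacle; the subtle point is merely that the Frobenius pullback of the destabilizing sub-line bundle remains a subbundle (not just a subsheaf) of $F^\ast\mathcal{V}$, which is automatic here because a nonzero map between line bundles on a smooth curve is injective and its image is saturated after allowing for the induced map to $F^\ast\mathcal{M}$. The hypotheses $g\geq 2$ and $\operatorname{char}=2$ are inherited from the ambient setting in which Lemma \ref{j} will be invoked (cf.\ the use alongside Lemma \ref{han} in Example \ref{exa}), but are not actually needed for this direction of the implication.
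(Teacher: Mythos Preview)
The paper does not supply a proof of this lemma; it merely cites \cite[Corollary 4.3.1]{j}. Your argument is correct and self-contained: a strictly semistable rank-$2$ bundle sits in an extension of two line bundles of equal slope, Frobenius pullback preserves this extension (by flatness on the smooth curve), and any extension of line bundles of equal slope is semistable by the standard dichotomy you describe. You are also right that the hypotheses $g\geq 2$ and characteristic $2$ play no role in this direction; they matter for the deeper content of \cite{j} (where one studies precisely which stable bundles are destabilized by Frobenius), but the implication recorded in the lemma is the elementary one, and your proof captures it cleanly.

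One small remark on exposition: in your final paragraph you say the subtle point is that $F^\ast\mathcal{L}$ remains a \emph{subbundle} of $F^\ast\mathcal{V}$. This is immediate because flat pullback of a short exact sequence of locally free sheaves is again a short exact sequence of locally free sheaves, so $F^\ast\mathcal{V}/F^\ast\mathcal{L}\cong F^\ast\mathcal{M}$ is locally free and hence $F^\ast\mathcal{L}$ is saturated. No appeal to the map into $F^\ast\mathcal{M}$ is needed for that step.
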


\begin{fact}\label{dd}
Let $\mathcal{V}$ be a vector bundle. Then $\deg(F^\ast(\mathcal{V}))=p\deg(\mathcal{V})$. In particular,  $\deg(\mathcal{V})=0$ implies $\deg(F^\ast(\mathcal{V}))=0$.
\end{fact}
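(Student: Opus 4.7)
The plan is to reduce the statement for a vector bundle $\mathcal{V}$ to the corresponding statement for a line bundle via the determinant construction. Recall that the degree of a vector bundle $\mathcal{V}$ of rank $r$ on a smooth projective curve is defined as $\deg(\mathcal{V}) := \deg(\det \mathcal{V})$, where $\det \mathcal{V} = \wedge^{r} \mathcal{V}$ is the determinant line bundle. Since $F^{\ast}$ is a tensor operation, it commutes with formation of exterior powers, so one has the formal compatibility $\det(F^{\ast}\mathcal{V}) \cong F^{\ast}(\det \mathcal{V})$. This immediately reduces the claim to the special case where $\mathcal{V}$ is a line bundle.

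The second step is the key identification: for any line bundle $\mathcal{L}$ on $X$, I claim $F^{\ast}\mathcal{L} \cong \mathcal{L}^{\otimes p}$. The absolute Frobenius is the identity on the underlying topological space and acts on the structure sheaf by $r \mapsto r^{p}$. Hence if $\mathcal{L}$ is trivialized by a cocycle $\{g_{ij}\}$, then $F^{\ast}\mathcal{L}$ is trivialized by the cocycle $\{F^{\sharp}(g_{ij})\} = \{g_{ij}^{p}\}$, which is precisely the cocycle defining $\mathcal{L}^{\otimes p}$. Chaining this with the preceding reduction yields
$$\deg(F^{\ast}\mathcal{V}) \;=\; \deg\bigl(F^{\ast}(\det \mathcal{V})\bigr) \;=\; \deg\bigl((\det \mathcal{V})^{\otimes p}\bigr) \;=\; p \deg(\det \mathcal{V}) \;=\; p \deg(\mathcal{V}),$$
where the penultimate equality uses the additivity of degree under tensor powers of line bundles (i.e.\ $\deg(\mathcal{L}^{\otimes n}) = n \deg(\mathcal{L})$, coming from the first Chern class).

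The ``in particular'' statement is then obvious: if $\deg(\mathcal{V}) = 0$, then $\deg(F^{\ast}\mathcal{V}) = p \cdot 0 = 0$. No step is genuinely difficult here; the only point requiring care is the identification $F^{\ast}\mathcal{L} \cong \mathcal{L}^{\otimes p}$, which is special to the \emph{absolute} Frobenius (rather than a general finite morphism of degree $p$), and the commutation of $\det(-)$ with pullback, which is a standard piece of multilinear algebra in the category of locally free sheaves.
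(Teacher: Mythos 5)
Your proof is correct, but it takes a different route from the paper's. The paper disposes of this fact by what it itself calls a ``(non)-natural'' method: identify $\mathcal{V}$ over the curve $\mathcal{C}$ with a twisted syzygy bundle $\Syz_{\mathcal{C}}(\underline{f})(m)$, use the compatibility $F^{\ast}(\Syz_{\mathcal{C}}(\underline{f})(m))\simeq \Syz_{\mathcal{C}}(\underline{f}^{p})(pm)$, and then read off the factor $p$ from the explicit degree formula $\deg(\mathcal{S}(m))=\bigl((n-1)m-\sum_i d_i\bigr)\deg(X)$ of Discussion \ref{stabl}. You instead argue intrinsically: $\deg(\mathcal{V})=\deg(\det\mathcal{V})$, pullback commutes with exterior powers so $\det(F^{\ast}\mathcal{V})\simeq F^{\ast}(\det\mathcal{V})$, and for a line bundle the absolute Frobenius, being the identity on the underlying space and the $p$-th power map on functions, sends a cocycle $\{g_{ij}\}$ to $\{g_{ij}^{p}\}$, so $F^{\ast}\mathcal{L}\simeq\mathcal{L}^{\otimes p}$ and additivity of degree finishes the argument. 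Your route buys generality and robustness: it applies to every vector bundle on a smooth projective curve (and, with degree defined via $c_1$ against a polarization, on higher-dimensional varieties), whereas the paper's sketch presupposes a syzygy presentation of $\mathcal{V}$, which is not available for an arbitrary bundle (already for line bundles the degrees realized by $\Syz(f,g)(m)$ are constrained by $\deg(X)$), though it does cover the bundles the paper actually uses, such as the degree-zero line subbundle $\shL$ in the proof of Example \ref{exa}. The paper's route, in turn, is quicker in its intended context, since the syzygy presentation and its Frobenius behavior are already on the table there; an equivalent shortcut to yours is to note that $F$ is finite of degree $p$ on the curve, so $\deg(F^{\ast}\shL)=\deg(F)\deg(\shL)=p\deg(\shL)$, which matches your cocycle computation.
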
One (non)-natural way to prove the fact over a curve $\mathcal{C}$ is to identify $\mathcal{V}$ with $\Syz_{\mathcal{C}}(\underline{f})(m)$ and  use
$F^\ast(\Syz_{\mathcal{C}}(\underline{f})(m))\simeq\Syz_{\mathcal{C}}(\underline{f}^ p)(pm)$. Then the formula presented in Discussion \ref{stabl} completes the proof.

\begin{lemma}\label{gkunz}
Let $R$ be a two-dimensional normal standard-graded  domain over an algebraically closed field $K$ of positive characteristic
$p$ and let $Y = \Proj R$ denote the corresponding smooth projective
curve. Then Frobenius pull-back is exact  over quasi-coherent sheaves.
\end{lemma}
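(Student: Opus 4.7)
\textbf{Proof proposal for Lemma \ref{gkunz}.}
The plan is to reduce the exactness of $F^\ast$ to a local statement about flatness of the Frobenius and then invoke Kunz's theorem, which is already recorded in Fact \ref{kunz}. First I would observe that $Y = \Proj R$ is a smooth projective curve: since $R$ is a two-dimensional normal graded domain over the algebraically closed field $K$, every homogeneous prime $\fp \subset R$ with $\dim R/\fp = 1$ (equivalently, every closed point $y \in Y$) has $R_\fp$ normal of Krull dimension one, hence a DVR, hence regular. Thus the stalks $\mathcal{O}_{Y,y}$ are all regular local rings, so $Y$ is a regular (equivalently, smooth, over an algebraically closed field) one-dimensional scheme.

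Next, let $F\colon Y \to Y$ denote the absolute Frobenius morphism, which on affine opens $\Spec(A) \subset Y$ is induced by the ring endomorphism $a \mapsto a^p$. For each point $y \in Y$, the induced map on stalks is the Frobenius endomorphism of the regular local ring $\mathcal{O}_{Y,y}$. By Fact \ref{kunz} (Kunz's theorem), the Frobenius endomorphism of a regular ring is flat, so each $F_y^\sharp\colon \mathcal{O}_{Y,y} \to \mathcal{O}_{Y,y}$ is flat. This is exactly the condition that $F$ is a flat morphism of schemes.

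Finally, I would invoke the standard fact that pull-back along a flat morphism is an exact functor on quasi-coherent sheaves: if $0 \to \mathcal{F}' \to \mathcal{F} \to \mathcal{F}'' \to 0$ is a short exact sequence of quasi-coherent $\mathcal{O}_Y$-modules, then on each affine open $\Spec(A) \subset Y$ its sections fit into a short exact sequence of $A$-modules, and tensoring with $A$ viewed as a module over itself via $F^\sharp$ (which is flat) preserves exactness; these local exact sequences glue to give exactness of $F^\ast$. No step here is a genuine obstacle, as each ingredient (regularity of a normal curve, Kunz's flatness, and exactness of flat pull-back) is either classical or already cited in the paper; the only point requiring mild care is ensuring the regularity claim at every closed point of $Y$, which is immediate from the $\mathcal{R}(1)$ half of Serre's criterion for normality.
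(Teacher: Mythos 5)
Your proposal is correct and follows essentially the same route as the paper: normality gives regularity of the stalks of $\mathcal{O}_Y$ (the paper cites $\mathcal{R}(1)$, you argue via DVRs at closed points), Kunz's theorem (Fact \ref{kunz}) together with compatibility of Frobenius with localization gives flatness of the Frobenius morphism, and exactness of pull-back along a flat morphism finishes the argument.
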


\begin{proof}
Recall that normal rings satisfy in the Serre's condition $\mathcal{R}(1)$. In view of Fact \ref{locali} and Fact \ref{kunz},
the Frobenius map is flat over $\Proj(R)$.  In particular, the pull-back via Frobenius is exact as a functor over quasi-coherent sheaves.
\end{proof}

\emph{Proof of Example \ref{exa}.}
 First note that the corresponding curve is nonsingular and projective, see \cite[Page 305, 2.4]{Har}. Recall that $g:=\dim H^1(\mathcal{C},\mathcal{O_{\mathcal{C}}})$ is the genus. By \cite[III) Ex. 4.7]{Har},
 $g=3$. Set $\mathcal{V}:=\Syz_{\mathcal{C}}(x^2,y^2,z^2)(3)$.

i)  Suppose on the contrary $\mathcal{V}$ is  strongly semistable and look for a contradiction. As, the semistability is independent of the shifting,
$\Syz(x^2,y^2,z^2)$ is  strongly semistable. Then, in view of Lemma \ref{eh}, $$e_{HK}((x^2,y^2,z^2),R)=12$$ which is not possible
as Lemma \ref{han} says.

ii)  Recall from \cite[Lemma 7.1]{brennercomputation} that   $\mathcal{V}$ is semistable.  Now we show that it is stable.  In view of  part i) there is $n\in \mathbb{N}_0$ such that $F^{n\ast}(\mathcal{V})$ is semistable but $F^{(n+1)\ast}(\mathcal{V})$ is not semistable. This syzygy bundle is of rank two and of degree zero. Due to Lemma \ref{j}, $F^{n\ast}(\mathcal{V})$ is stable. If $\mathcal{V}$  were not be stable it should have a line subbundle $\shL$ of degree zero.  In the light of Lemma \ref{gkunz}, $F^{n\ast}(\shL)\subset F^{n\ast}(\mathcal{V})$. By Fact \ref{dd}, it is of zero degree. This contradicts the stability of $F^{n\ast}(\mathcal{V})$. So, $\mathcal{V}$  is stable.
$\hspace{44 mm}$ $\square$

The following characteristic-free realization may be helpful.

\begin{remark}
Having the above notation in mind. Due to \cite[Example 7.6]{brennercomputation}
there is a line bundle  $\shL$ of degree $-1$ and an exact sequence $\xi$ of the form $$
0\longrightarrow \shL\longrightarrow \mathcal{V}\longrightarrow\shL^{-1}\longrightarrow 0.$$As $\mathcal{V}$ is semistable, $\xi$ does not split.
One may observes $$\xi\in \Ext^1(\shL^{-1},\shL)\simeq H^1(\shL^2)\simeq H^0(\omega_{\mathcal{C}}\otimes\shL^{-2})^v$$
The base-point-free linear system $H^0(\omega\otimes\shL^{-2})$ is of dimension $g+1=4$. It defines an embedding $\varphi:\mathcal{C} \to \PP^{3}$. We view $\xi$  as an
element in $\PP(H^l(\mathcal{C}, L^2))\simeq\PP^3$. Then by the proof of \cite[Theorem 4.10]{fri}, $$\mathcal{V}\textit{ is stable }\Longleftrightarrow\xi\in \PP^3\setminus\varphi(\mathcal{C}).$$
\end{remark}

\section{$F$-threshold and Proof of Example \ref{mi}}
Rings in this section are not necessarily  graded.
Let $I,J$ be two ideals with $J\subset\rad(I)$. Set $v(q):=\sup\{\ell:J^{\ell}\nsubseteqq I^{[q]}\}$. Look at
the quantities $c_+^I(J):=\limsup_{n\to\infty}v(q)/q$ and $c_-^I(J):=\liminf_{n\to\infty}v(q)/q$.
When these quantities  are the same, we call it
the $F$-threshold of $I$ with respect to $J$ denoted by  $c^I(J)$.  For example in the regular case and due to Fact \ref{kunz}, $\{v(q)/q\}$
is monotone and so $c_-^I(J)=c_+^I(J)$.
Let us give an easy connection from $c(I)$ to $F$-threshold:

\begin{observation}
Let $I$ be $\fm$-primary and suppose $c^I(\fm)$ exists. Then
$c(I)-1\leq c^I(\fm)\leq c(I)$.
\end{observation}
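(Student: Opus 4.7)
The proof will be an essentially formal translation of the two definitions. Since $I$ is $\fm$-primary, $R/I^{[q]}$ has finite length, so $H^0_{\fm}(R/I^{[q]}) = R/I^{[q]}$. The first step is therefore to rewrite the vanishing $\fm^{bq}H^0_{\fm}(R/I^{[q]})=0$ as the containment $\fm^{bq}\subseteq I^{[q]}$. By the very definition $v(q):=\sup\{\ell:\fm^\ell\nsubseteq I^{[q]}\}$, this containment is equivalent to $bq\geq v(q)+1$. Thus
\[
c(I)=\inf\bigl\{b\in\mathbb{N}_0:\;bq\geq v(q)+1 \text{ for all } q\gg 0\bigr\}.
\]

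For the upper bound $c^I(\fm)\leq c(I)$, the choice $b=c(I)$ is admissible, so $c(I)\,q\geq v(q)+1>v(q)$ for all $q\gg 0$. Dividing by $q$ gives $v(q)/q<c(I)$ for all $q\gg 0$, and passing to the limit yields $c^I(\fm)=\lim_q v(q)/q\leq c(I)$.

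For the lower bound $c(I)-1\leq c^I(\fm)$, assume $c(I)\geq 1$ (the case $c(I)=0$ is trivial since $c^I(\fm)\geq 0$). By minimality of $c(I)$, the integer $b=c(I)-1$ fails the defining condition, so there exist arbitrarily large $q$ with $(c(I)-1)q<v(q)+1$; since both sides are integers, this gives $v(q)\geq (c(I)-1)q$, i.e.\ $v(q)/q\geq c(I)-1$ for infinitely many $q$. Because $c^I(\fm)=\lim_q v(q)/q$ exists by hypothesis, we conclude $c^I(\fm)\geq c(I)-1$.

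There is no real obstacle here; the only point requiring a small amount of care is that $c(I)$ is an integer while $c^I(\fm)$ is a priori only a real number, which forces the one-unit slack in the inequality and which is exactly why the statement loses the endpoint in the lower bound. The argument uses nothing beyond the definitions of $c(I)$, $v(q)$, and $c^I(\fm)$, together with the $\fm$-primariness of $I$ to identify $H^0_{\fm}(R/I^{[q]})$ with $R/I^{[q]}$.
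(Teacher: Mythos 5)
Your proof is correct and follows essentially the same route as the paper: rewrite the vanishing condition as $\fm^{bq}\subseteq I^{[q]}$ (using that $H^0_{\fm}(R/I^{[q]})=R/I^{[q]}$ for $\fm$-primary $I$), use the eventual containment at $b=c(I)$ to get $v(q)\leq c(I)q-1$ and hence the upper bound, and use the failure at $b=c(I)-1$ along a subsequence, together with the assumed existence of the limit, to get the lower bound. The only cosmetic difference is that you make explicit the identification of $H^0_{\fm}$ with the whole quotient and the trivial case $c(I)=0$, which the paper leaves implicit.
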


\begin{proof}
By definition of $c(I)$, for each $n$ there is $n'\geq n$ such that $\fm^{(c(I)-1)q'} \nsubseteq I^{[q']}$, where $q':=p^{n'}$.
Thus, $v(q')\geq (c(I)-1)q'$. To compute a limit its enough to deal with the subsequences, when the limit exists. So, $\lim_{n\to\infty}v(q)/q=\lim_{n'\to\infty}v(q')/q' \geq c(I)-1$. Again by definition of $c(I)$, there is $q_1$ such that $\fm^{c(I)q} \subseteq I^{[q]}$ for all $q>q_1$.
Thus, $v(q)\leq c(I)q-1$, and so $\limsup_{n\to\infty}v(q)/q\leq c(I)$.
\end{proof}

In general $c^I(\fm)$ is not necessarily  an integer. However, the bound may be sharp:

\begin{example}
Let $(R,\fm)$ be a $d$-dimensional regular local ring of prime characteristic. Then $c(\fm)=c^{\fm}(\fm)=d$.
\end{example}

\begin{proof}As $R$ is regular, $\fm$  is generated by a full system of parameters.  Thus the equality $c^{\fm}(\fm)=d$ is in \cite[Example 2.7]{Mi}. The other equality follows by the proof of \cite[Example 2.7]{Mi}.  Let us present the short argument.
As $\fm$ is generated by $d$ elements, $\fm^{dq}\subseteq\fm^{[q]}$. So, $c(\fm)\leq d$. Suppose
on the contradiction that $\fm^{dq-q}\subseteq\fm^{[q]}$ for all $q\gg 0$. Take $q>d$ be large enough. Then $dq-d>dq-q$.
Hence, $\fm^{dq-d}\subset\fm^{dq-q}\subseteq\fm^{[q]}$. This contradicts the monomial conjecture which is a theorem in this situation.
\end{proof}

Recall that $ {\overline {I}}$ the integral closure of an ideal $I$  is the set of all elements $r\in R$ that  there exist $a_{i}\in I^{i}$ such that
$r^{n}+a_{1}r^{{n-1}}+\cdots +a_{{n-1}}r+a_{n}=0.$
The ring in  Example \ref{mi} is complete-intersection and of low-dimension.

\emph{Proof of Example \ref{mi}.}
Let $R:=\mathbb{F}_p[[t^2,t^3]]$, $\fm:=(t^2,t^3)$ and $J:=(t^2)$. Then $R$ is a Cohen-Macaulay local ring of characteristic $p > 0$ with
$d=\dim(R) =1$ and $J$ is generated by a full system of parameters. Set $f(X):=X^2-t^6\in R[X]$. Note that $t^6=t^2.t^4\in J.J=J^2$. Then $f(t^3)=0$.
So $t^3\in\overline{J}$, i.e., $\fm=(t^2,t^3)\subseteq\overline{J}\subseteq \fm$.  Recall that $\fm^2=(t^4,t^5,t^6)\subseteq J$. Thus, $$a:=\max\{n\mid\fm^n\nsubseteq J\}=1.$$Recall that \cite[Question 3.5]{Mi} claims
$\fm^s \subseteq \overline{J}$ if
and only if $s\geq \frac{a}{d} + 1$.
 The only possible case for $s$ is the case $s=1$, i.e.,  $\fm \subseteq \overline{J}$. If the discussed question were be the case then we should have
 $s=1\geq 2=\frac{a}{d} + 1$ which is  a contradiction. $\hspace{128 mm} \square$

\section{Connecting to the Waldschmidt constant}
Let $R$ be a $\mathbb{N}$-graded integral domain and $I\lhd R$ be homogeneous.
Set $\alpha(I):=\min\{n:I_n\neq 0\}$.

\begin{definition}
The  Waldschmidt constant  of $I$ is defined by $\gamma(I):=\lim_{n\to\infty}  \frac{\alpha(I^{(n)})}{n}$.
\end{definition}

The next result is well-known over polynomial rings \cite{harb}:
\begin{lemma}
The above limit exists.
\end{lemma}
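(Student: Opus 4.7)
The plan is to establish the existence of the limit via Fekete's lemma, so the heart of the argument is to verify that the sequence $\{\alpha(I^{(n)})\}_{n\geq 1}$ is subadditive in $n$. This reduces to the ideal-theoretic inclusion $I^{(n)} \cdot I^{(m)} \subseteq I^{(n+m)}$, which I would prove by localizing at each associated prime: if $\fp \in \Ass(I)$, then for $f \in I^{(n)}$ and $g \in I^{(m)}$ we have $f/1 \in I^{n}R_\fp$ and $g/1 \in I^{m}R_\fp$, so $fg/1 \in I^{n+m}R_\fp$. Contracting back and intersecting over $\Ass(I)$ gives the desired containment.

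Next, I would use the fact that $R$ is $\mathbb{N}$-graded and a domain, together with the observation that every associated prime of a homogeneous ideal is homogeneous, so that $I^{(n)}$ is a nonzero homogeneous ideal (it contains $I^n \neq 0$). Choose homogeneous elements $f \in I^{(n)}$ and $g \in I^{(m)}$ of minimum degrees $\alpha(I^{(n)})$ and $\alpha(I^{(m)})$ respectively. Since $R$ is a domain, $fg \neq 0$, and $fg \in I^{(n+m)}$ is homogeneous of degree $\alpha(I^{(n)}) + \alpha(I^{(m)})$. Hence
\[
\alpha(I^{(n+m)}) \leq \alpha(I^{(n)}) + \alpha(I^{(m)}).
\]

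Finally, I would invoke Fekete's subadditivity lemma: for any subadditive sequence $a_n \geq 0$ of nonnegative reals, $\lim_{n\to\infty} a_n/n$ exists and equals $\inf_{n\geq 1} a_n/n$. Nonnegativity of $\alpha(I^{(n)})$ is automatic from the $\mathbb{N}$-grading, and boundedness above by $n\alpha(I)$ follows from iterating the subadditivity (so the infimum is finite). Applying this with $a_n := \alpha(I^{(n)})$ yields the existence of $\gamma(I) = \lim_{n \to \infty} \alpha(I^{(n)})/n = \inf_n \alpha(I^{(n)})/n$.

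No step is truly an obstacle here — the only place a hypothesis is used non-trivially is in deducing $fg \neq 0$ from $f,g \neq 0$, which is where the integral domain assumption enters; without it one would have to argue more carefully (e.g.\ choose $f,g$ outside the minimal primes, which fails in general). The homogeneity of $I^{(n)}$ also quietly uses that the associated primes of a homogeneous ideal in a graded ring are homogeneous, a fact I would cite rather than reprove.
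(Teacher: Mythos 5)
Your proof is correct and follows essentially the same route as the paper: the containment $I^{(n)}I^{(m)}\subseteq I^{(n+m)}$ gives subadditivity of $\alpha(I^{(n)})$, and Fekete's lemma then yields the limit. The paper states these two steps without elaboration, whereas you additionally verify the containment by localizing at the associated primes and justify the degree inequality via the domain and grading hypotheses, which are exactly the details the paper leaves implicit.
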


\begin{proof}
As $I^{(n)}I^{(m)}\subset I^{(n+m)}$,  we have $\alpha(I^{(n+m)})\leq\alpha(I^{(n)})+\alpha(I^{(m)})$. Due to the Fekete's lemma, we get the desired limit.
\end{proof}
\begin{corollary}\label{swan}(Also, see \cite{sw})
There is $c$ such that $\fm^{cn}\fp^{(n)}\subset\fp^{n}$, where $\fp$ is a prime homogeneous ideal of dimension
one in a standard graded ring $(R,\fm)$ satisfying the Serre's condition $\mathcal{S}(2)$ over a field of any characteristic.
\end{corollary}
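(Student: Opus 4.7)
Since $\dim R/\fp = 1$, the Fact recalled in Section~2 identifies $\fp^{(n)}/\fp^n$ with $H^0_\fm(R/\fp^n)$, so the desired containment $\fm^{cn}\fp^{(n)}\subseteq\fp^n$ is equivalent to the uniform annihilation statement $\fm^{cn}H^0_\fm(R/\fp^n)=0$. The strategy is to control the top non-vanishing degree of this local cohomology linearly in $n$ and then close by a pure degree count, in the same spirit as Lemma~\ref{mainlemma}.

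First, the $\mathcal{S}(2)$ hypothesis forces $\depth R \geq 2$, hence $H^0_\fm(R)=H^1_\fm(R)=0$. Applying the long exact sequence of local cohomology to $0\to \fp^n\to R\to R/\fp^n\to 0$, exactly as in Lemma~\ref{s2} and Proposition~\ref{lemab}, one obtains the graded isomorphism $H^0_\fm(R/\fp^n)\cong H^1_\fm(\fp^n)$ for every $n$.

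Next, invoke the asymptotic linearity of Castelnuovo--Mumford regularity of powers (Cutkosky--Herzog--Trung, Kodiyalam): since $\fp$ is a fixed homogeneous ideal in a standard graded algebra over a field, there exist constants $A,B$, independent of $n$, with $\reg(\fp^n)\leq An+B$ for all $n\geq 1$. Because $\engrad H^i_\fm(M)\leq \reg(M)-i$, we get $\engrad H^1_\fm(\fp^n)\leq An+B-1$, so $H^0_\fm(R/\fp^n)$ is concentrated in degrees at most $An+B-1$. Standard-gradedness of $R$ forces $H^0_\fm(R/\fp^n)\subseteq R/\fp^n$ to live in non-negative degrees. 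Therefore $H^0_\fm(R/\fp^n)$ is supported in the bounded window $[0,An+B-1]$, and a degree count shows that $\fm^{An+B}$ annihilates it. Choosing any $c$ with $cn\geq An+B$ for all $n\geq 1$ (for instance $c=A+B$) delivers $\fm^{cn}\fp^{(n)}\subseteq \fp^n$.

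The only non-routine ingredient is the linear bound on $\reg(\fp^n)$. When $R$ is a polynomial ring this is the classical Cutkosky--Herzog--Trung/Kodiyalam theorem; for a general standard graded $\mathcal{S}(2)$-algebra one either presents $R$ as a quotient of a polynomial ring and transports the statement, or appeals directly to the graded-algebra version. This is the main technical point of the argument; everything else reduces to the $\mathcal{S}(2)$ identification and a bookkeeping of graded degrees.
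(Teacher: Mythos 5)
Your argument is correct and follows essentially the same route as the paper: identify $\fp^{(n)}/\fp^n$ with $H^0_{\fm}(R/\fp^n)$, use $\mathcal{S}(2)$ to obtain the graded isomorphism $H^0_{\fm}(R/\fp^n)\simeq H^1_{\fm}(\fp^n)$ as in Lemma \ref{s2}, invoke a linear-in-$n$ bound on $\engrad(H^1_{\fm}(\fp^n))$, and finish with the degree bookkeeping of Lemma \ref{mainlemma}. The only cosmetic difference is the source of the linear bound (you cite the Cutkosky--Herzog--Trung/Kodiyalam regularity of powers, the paper cites Eisenbud--Harris \cite{ei}), which does not change the structure of the proof.
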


\begin{proof}
Recall that $\engrad(H^1_{\fm}(\fp^{n}))\leq an+b$ for $n\gg 0$, see e.g. \cite{ei}.
By the same reason as Lemma \ref{s2}, $\engrad(H^0_{\fm}(R/\fp^{n}))\leq an+b$ for $n\gg 0$.
As, $\dim R/ \fp=1$ and from the fact $\fp^{(n)}$ is a $\fp$-primary component of $\fp^n$, one may observes that $(\fp^n)^{sat}=\fp^{(n)}$.
By the same reason as Lemma \ref{mainlemma}, there is $c$ such that $$0=\fm^{cn}H^0_{\fm}(R/\fp^{n})=\fm^{cn}\frac{(\fp^n)^{sat}}{\fp^n}=\fm^{cn}\frac{\fp^{(n)}}{\fp^n},$$
which yields the claim.
\end{proof}
\begin{proposition}\label{w}Let $\fp$ be a prime homogeneous ideal of dimension
one in a standard graded ring $(R,\fm)$ over a field of any characteristic.
 Then $\gamma(\fp)\geq \alpha(\fp)-d(\fp)$.
\end{proposition}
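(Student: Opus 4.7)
The idea is to exploit the interpretation of $d(\fp)$ as controlling the obstruction between symbolic and ordinary powers, together with the obvious lower bound $\alpha(\fp^n) \geq n\alpha(\fp)$ in a standard graded domain.

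First I would identify $H^0_\fm(R/\fp^n)$ with $\fp^{(n)}/\fp^n$, using the Fact recorded in Section~2: since $\dim R/\fp = 1$, the saturation of $\fp^n$ equals the $\fp$-primary component of $\fp^n$, which is exactly the symbolic power $\fp^{(n)}$. Consequently, the definition of $d(\fp)$ translates into the containment
\[
\fm^{d(\fp) n}\,\fp^{(n)} \subseteq \fp^n \quad \text{for all } n \gg 0.
\]

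Next, fix such an $n$ and pick a nonzero homogeneous $f \in \fp^{(n)}$ realizing the initial degree, i.e.\ $\deg f = \alpha(\fp^{(n)})$. Choose any nonzero homogeneous element $g \in R_{d(\fp)n}$; such a $g$ exists because $R$ is a standard graded domain of positive dimension (so $R_1$ contains a nonzero element whose powers are all nonzero), and since $R$ is standard graded, $\fm^{d(\fp)n} = \bigoplus_{i \geq d(\fp)n} R_i$, hence $g \in \fm^{d(\fp)n}$. Then $gf \in \fp^n$, and the domain hypothesis forces $gf \neq 0$.

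The final step is the degree estimate. Because every homogeneous element of $\fp^n$ is a sum of $n$-fold products of homogeneous elements of $\fp$, each of degree at least $\alpha(\fp)$, we have $\alpha(\fp^n) \geq n\,\alpha(\fp)$. Hence
\[
d(\fp)\,n + \alpha(\fp^{(n)}) \;=\; \deg(g) + \deg(f) \;=\; \deg(gf) \;\geq\; \alpha(\fp^n) \;\geq\; n\,\alpha(\fp),
\]
which rearranges to $\alpha(\fp^{(n)})/n \geq \alpha(\fp) - d(\fp)$ for all $n \gg 0$. Passing to the limit in $n$ yields $\gamma(\fp) \geq \alpha(\fp) - d(\fp)$, as desired.

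The only subtle point is justifying the identification $H^0_\fm(R/\fp^n) \cong \fp^{(n)}/\fp^n$ (already recorded in Section~2) and ensuring the existence of a nonzero $g$ of the correct degree; both are essentially bookkeeping in the standard graded domain setting, so no serious obstacle should arise.
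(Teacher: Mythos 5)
Your proposal is correct and follows essentially the same route as the paper: identify $H^0_{\fm}(R/\fp^n)$ with $\fp^{(n)}/\fp^n$, read the definition of $d(\fp)$ as the containment $\fm^{d(\fp)n}\fp^{(n)}\subseteq\fp^n$ for $n\gg 0$, compare initial degrees to get $\alpha(\fp^{(n)})+d(\fp)n\geq\alpha(\fp^n)\geq n\alpha(\fp)$, and pass to the limit. The only difference is that you spell out the degree comparison via explicit homogeneous elements $f,g$ with $gf\neq 0$ (using the domain hypothesis, which is the standing assumption of that section), a detail the paper leaves implicit.
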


\begin{proof}
Recall from Definition \ref{num} that  $0=\fm^{d(\fp)n}H^0_{\fm}(R/\fp^{n})=\fm^{d(\fp)n}\frac{\fp^{(n)}}{\fp^n}$  for all $n\gg 0$. Thus $\fm^{d(\fp)n}\fp^{(n)}\subset\fp^{n}$ for all $n\gg 0$. By the degree containment, $\alpha(\fp^{(n)})+d(\fp)n\geq\alpha(\fp^n)=n\alpha(\fp)$ for all $n\gg 0$. To compute
a limit, its enough to deal with the tail of the sequence.   Taking the limit, we get the desired claim.
\end{proof}

The above bound is sharp:

\begin{example}
Let $R:=k[x,y]$ and let $\fp:=(x)$. Due to $\fp^n=\fp^{(n)}=(x^n)$, we have $\gamma(\fp)=\alpha(\fp)=1$. As $R/\fp^n=k[x,y]/(x^n)$ is Cohen-Macaulay and of dimension one, $H^0_{\fm}(R/\fp^n)=0$. Then $d(\fp)=0$. So,  $\gamma(\fp)= \alpha(\fp)-d(\fp)$.
\end{example}

Let us give more examples:

\begin{example}
Let $k$ be a field and $R:=k[X,Y,Z]/(Z^2-XY)$. The ideal $\fp:=(x,z)$ is homogeneous, prime and of dimension one. The following holds:
\begin{enumerate}
\item[i)]$\gamma(\fp)=1/2$,
\item[ii)] $d(\fp)=1$,
\item[iii)] $\lim_{n\to\infty}\ell(\frac{H^0_{\fm}(R/\fp^n)}{n^{\dim R}})=1/4.$
\end{enumerate}
\end{example}

\begin{proof}First we compute the symbolic powers.
Let $x^iy^jz^k\in\fm^{2n}\cap(x^n)$. Hence $i\geq n$ and $i+j+k=2n$. Thus $j\leq i$. If $j=0$, then $x^iy^jz^k\in\fp^{2n}.$ In the case $j>0$ and due to $z^2=xy$ we have $x^iy^jz^k=x^{i-j}(x^jy^jz^k)=x^{i-j}z^{k+2j}.$ This belongs to $\fp^{2n}$. Thus, $(x^n)\cap\fm^{2n}\subseteq \fp^{2n}$. The other side inclusion deduces in a similar way. So, $$\fp^{2n}=(x^n)\cap\fm^{2n}.$$
Now we deal with odd integers. Note that
\[\begin{array}{ll}
\fp^{2n+1}&=(x^{2n+1},\ldots,x^{n+1}z^n,x^nz^{n+1},\ldots,z^{2n}x,z^{2n+1})\\
&=(x^{2n+1},\ldots,x^{n+1}z^n,x^{n+1}yz^{n-1},\ldots,y^nx^{n+1},z^{2n+1})\\
&\subseteq(x^{n+1},z^{2n+1}).
\end{array}\]This yields  $\fp^{2n+1}\subseteq(x^{n+1},z^{2n+1})\cap\fm^{2n+1}.$
Reversely, suppose $x^iy^jz^k\in(x^{n+1},z^{2n+1})\cap\fm^{2n+1}$ where $k=0$ or $k=1$. Look at the following possibilities:
\begin{enumerate}
\item[a)] The case $k=1$: We have $i+j=2n$. As $z^{2n+1}=zx^{n}y^n$, $(x^{n+1},z^{2n+1})\subset(x^n)$. Conclude that $i\geq n$.  Hence $j\leq n$. Thus $x^iy^jz^k=x^{i-j}z^{2j+1}\in\fp^{2n+1}$.
\item[b)] The case $k=0$: We have $i+j=2n+1$. As $z^{2n+1}=zx^{n}y^n$, $i\geq n$. Hence $j\leq n+1$. We claim that $j\neq n+1$.
 Suppose on the contrary that $x^ny^{n+1}\in(x^{n+1},z^{2n+1})\cap\fm^{2n+1}$. This shows
$x^ny^{n+1}\in \fp^{(2n+1)}$. As
 $H^{0}_{\fm}(R/\fp^{2n+1})\simeq \fp^{(2n+1)}/\fp^{2n+1}$, $x^ny^{n+1}+\fp^{2n+1}$ is annihilated by some power of $y$. There is $\ell>0$ such that
 $x^{n} y^{n+1+\ell}\in\fp^{2n+1}$ which is impossible. Thus $j\leq n$. Therefor,  $x^iy^jz^k=x^{i-j}z^{2j}\in\fp^{2n+1}$. \end{enumerate}  So,
 $\fp^{2n+1}=(x^{n+1},z^{2n+1})\cap\fm^{2n+1}.$
Quickly, we deduce

 \begin{equation*}\fp^{(i)}=\left\{
\begin{array}{rl}(x)^{
\frac{i}{2}}& \qquad\text{ \ \ if }i\in 2\mathbb{N} \\
(x^{\frac{i+1}{2}},z^{i})&\qquad\text{  \ \ if } i\notin 2\mathbb{N}
\end{array} \right.\Longrightarrow
\alpha(\fp^{(i)})=\left\{
\begin{array}{rl}
\frac{i}{2}& \qquad\text{ \ \ if }i\in 2\mathbb{N} \\
\frac{i+1}{2}&\qquad\text{  \ \ if } i\notin 2\mathbb{N}.
\end{array}  \right.
\end{equation*}
Now we are in a position  to prove the claims:

i) We note that$$\gamma(\fp)=\lim_{n\to\infty}  \frac{\alpha(\fp^{(n)})}{n}=\lim_{n\to\infty}  \frac{\alpha(\fp^{(2n)})}{2n}=\lim_{n\to\infty}\frac{n}{2n}=1/2.$$

 ii) As $\fp$ is of dimension one, $H^{0}_{\fm}(R/\fp^i)\simeq \frac{\fp^{(i)}}{\fp^i}$. By the above computation, $H^{0}_{\fm}(R/\fp^{2n})\simeq\frac{(x^{n})}{\fp^{2n}}$ and $H^{0}_{\fm}(R/\fp^{2n+1})\simeq\frac{(x^{n+1},z^{2n+1})}{\fp^{2n+1}}$.
 We claim that $\fm^{n}  H^{0}_{\fm}(R/\fp^{n})=0$. We do this for even $n$. The other case follows in a same way.
Let $i,j$ be such that $i+j+1=2n$.
Suppose first that $j\leq n$. Then $i\geq n-1$ and $i+n+1\geq 2n$. Hence, $x^{n+i}y^jz\in(x,z)^{2n}$.
Now assume $j> n$.
We get that $$x^{n+i}y^jz=x^{i}y^{n-j}z^{1+2n}\in(x,z)^{2n}.$$In both cases $zx^iy^j\in\Ann(H^{0}_{\fm}(R/\fp^{2n}))$.
 In a similar vein, $x^iy^j\in\Ann(H^{0}_{\fm}(R/\fp^{2n}))$ with $i+j=2n$. So
$\fm^{2n}H^{0}_{\fm}(R/\fp^{2n})=0.$

iii)  For any $0\leq  i<n$, define $A_{i}:=\{x^{n+i}y^{k}:0\leq k< n-i\}$.
For any $0\leq  j\leq n-2$, define $B_j:=\{x^{n+j}y^{\ell}z:1\leq \ell\leq n-1-j\}$.
Finally, set
$C:=\{x^{n+j}z:0\leq j\leq n-2\}$.
Let   $x^iy^jz^k\in(x^n)\setminus\fp^{2n}$. Set  $\Gamma:=(\bigcup_{\ell} A_{\ell}) \cup (\bigcup_{m} B_{m}) \cup C $.
Due to $z^2=xy$, we observe  $x^iy^jz^k\in \Gamma$. Also,  $\Gamma$ consists of
$K$-linearly independent elements. Thus,

\[\begin{array}{ll}
\ell(H^0_{\fm}(R/\fp^{2n}))&=\sum|A_i|+\sum|B_j|+|C|\\
&=\sum_{i=0}^{n-1}|n-i|+\sum_{j=0}^{n-2}|n-1-j|+ (n-1)\\
&= \frac{(n+1)n}{2}+\frac{n(n-1)}{2}+n-1\\
&= n^2+n-1.
\end{array}\]
So, $$\lim_{n\to\infty}\ell(\frac{H^0_{\fm}(R/\fp^n)}{n^{2}})=\lim_{n\to\infty}\ell(\frac{H^0_{\fm}(R/\fp^{2n})}{(2n)^{2}})=1/4.$$
\end{proof}

\begin{remark}
i) On $\PP^N_{\mathbb{C}}$ and over any  fat ideal $I$,  Chudnovsky's conjecture says
$$\frac{\alpha(I)+\dim \PP^N_{\mathbb{C}}-1}{\dim\PP^N_{\mathbb{C}}}\leq\frac{\alpha(I^{(n)})}{n}.$$
By the above example this is not true for general projective schemes.

ii) The inequality is true for any homogeneous ideal consisting a linear form. Indeed, we have $\alpha(I)=1$. By Euler's formula, $$I^n\subseteq I^{(n)}\subseteq \fm I^{(n-1)}\subseteq \fm^{n-1} I.$$ Read this as $$n=n-1+ \alpha(I)=\alpha(\fm^{n-1} I)\leq \alpha(I^{(n)})\leq n\alpha(I)=n.$$ Hence, $\alpha(I^{(n)})=n$ and the claim follows.

iii) Moreover, if
$I$ is prime and  consisting a 2-form the inequality holds for  $n=2$. Indeed, $3=\alpha(\fm I)\leq \alpha(I^{(2)})\leq 4=2\alpha(I)$. In the case $\alpha(I^{(2)})=4$ there is nothing to prove. Also,  $N=1$ implies that $\Ht(I)=1$.
It turns out that $I$ is principal. We conclude by this that $I^{(2)}=I^2$ and so $\alpha(I^{(2)})=4$. Then, without loss of generality may assume that $\alpha(I^{(2)})=3$ and $N\geq 2$. It remains to show $3/2 \geq \frac{N+1}{N}$. This always holds, as $N\geq 2$. \end{remark}

\section{Remarks on a question of Herzog}   Recall the following from \cite{hhh1}:

\begin{question}\label{herzog}  Let $(R,\fm)$ be a 3-dimensional regular local ring and $\fp$ a prime ideal of dimension one.  What is  $\ell( \fp^{(n)}/ \fp^n )$?
\end{question}

There is a simple algorithm  to deal with this:

\begin{corollary}\label{herzoga}
Let $R$ be a standard graded algebra over a field and $\fp$ a prime ideal of dimension one. Then there is a homogenous  $x$
such that $\ell(\fp^{(n)}/ \fp^n )=2\ell(\frac{R}{ \fp^n+(x^n)})-\ell(\frac{R}{\fp^n+(x^{2n})}).$
\end{corollary}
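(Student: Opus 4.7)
The plan is to realize $\ell(\fp^{(n)}/\fp^n)$ as the length of a local cohomology module and then apply Lemma \ref{linear} to the family $\{I_n := \fp^n\}$. The key preliminary identification is the Fact recalled early in the paper: since $\fp$ has dimension one, $\fp^{(n)}/\fp^n = H^0_\fm(R/\fp^n)$. So it suffices to produce a homogeneous $x$ with
$$\ell\bigl(H^0_\fm(R/\fp^n)\bigr) = 2\ell\!\left(\frac{R}{\fp^n+(x^n)}\right) - \ell\!\left(\frac{R}{\fp^n+(x^{2n})}\right).$$

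To use Lemma \ref{linear} I need to check its two hypotheses for the family $\{\fp^n\}$. For the (LC) hypothesis, I will invoke Corollary \ref{swan}: there is a constant $b$ (independent of $n$) with $\fm^{bn}\fp^{(n)}\subseteq \fp^n$, which reads exactly as $\fm^{bn}H^0_\fm(R/\fp^n)=0$. (In the regular setting of Question \ref{herzog} the needed condition $\mathcal{S}(2)$ is automatic; by enlarging $b$ we may pass from the ``$n\gg 0$'' statement to one valid for every $n$.) For the prime-avoidance hypothesis, observe that the set $\Gamma:=\{\fq\in\Ass(R/\fp^n)\setminus\{\fm\}\}$ is just $\{\fp\}$: any non-maximal associated prime of $\fp^n$ must lie inside the $\fp$-primary component $\fp^{(n)}$ and hence must equal $\fp$. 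Prime avoidance is therefore trivial for $\Gamma$.

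It remains to construct $x$. Because $\dim R/\fp=1$ while $\dim R\geq 2$, we have $\fm\not\subseteq\fp$, so there is a homogeneous element of $\fm$ not lying in $\fp$; raising it to a sufficiently large power produces a homogeneous $x\in\fm^b\setminus\fp$. Feeding this $x$ into Lemma \ref{linear} with the family $I_n=\fp^n$ gives
$$\ell\bigl(H^0_\fm(R/\fp^n)\bigr)=2\ell\!\left(\frac{R}{\fp^n+(x^n)}\right)-\ell\!\left(\frac{R}{\fp^n+(x^{2n})}\right),$$
and the identification $H^0_\fm(R/\fp^n)=\fp^{(n)}/\fp^n$ yields the claimed formula.

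The only real obstacle is the (LC) hypothesis for ordinary powers of $\fp$: everything else is either a direct quotation of Lemma \ref{linear} or an immediate dimension count. This obstacle is absorbed by Corollary \ref{swan}, which is why I list that step as the heart of the argument.
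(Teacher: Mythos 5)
Your argument is essentially the paper's own proof: identify $\fp^{(n)}/\fp^n$ with $H^0_{\fm}(R/\fp^n)$, obtain the linear annihilation $\fm^{bn}H^0_{\fm}(R/\fp^n)=0$ from Corollary \ref{swan}, observe that the only non-maximal associated prime of $R/\fp^n$ is $\fp$, pick a homogeneous $x\in\fm^{b}\setminus\fp$, and apply Lemma \ref{linear} to the family $\{\fp^n\}$. Your explicit remark about the $\mathcal{S}(2)$ hypothesis built into Corollary \ref{swan} is a caveat the paper passes over silently, but otherwise the two arguments coincide.
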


\begin{proof} Look at the graded family $\{\fp^m\}$. Having the proof of Corollary \ref{swan},  there is $a$ such that $\fm^{aq}H^0_{\fm}(R/\fp^n)=0$. The set $\Delta:=\{\fp:\fp\in \Ass(R/ \fp^n) \}$ is finite. In fact  $\Delta\subset\min(\fp)\cup\{\fm\}$.   Now, it is enough to take $x\in \fm^a \setminus  \fp$ and apply Lemma \ref{linear}.
\end{proof}

\begin{corollary}
Adopt the above notation. Then $\fp^{(n)}=\fp^n$ if and only if $$\ell(\frac{R}{\fp^n+(x^{2n})})=2\ell(\frac{R}{\fp^n+(x^n)}).$$
\end{corollary}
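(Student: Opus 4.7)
The plan is to read off the statement directly from Corollary \ref{herzoga}. That result already provides a usable length formula
\[
\ell\!\left(\fp^{(n)}/\fp^n\right) \;=\; 2\ell\!\left(R/(\fp^n+(x^n))\right) - \ell\!\left(R/(\fp^n+(x^{2n}))\right),
\]
valid for the homogeneous element $x$ constructed there. The logical content of the present corollary is therefore the equivalence
\[
\fp^{(n)}=\fp^n \;\Longleftrightarrow\; \ell(\fp^{(n)}/\fp^n)=0,
\]
after which one simply substitutes and rearranges the displayed formula to obtain the desired numerical identity $\ell(R/(\fp^n+(x^{2n})))=2\ell(R/(\fp^n+(x^n)))$.

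First I would recall the identification $\fp^{(n)}/\fp^n \simeq H^0_{\fm}(R/\fp^n)$ from the fact in Section~2 (which applies because $\dim R/\fp = 1$, so all non-maximal primary components of $\fp^n$ assemble into $\fp^{(n)}$ and the $\fm$-primary component contributes the saturation). Next I would observe that $H^0_{\fm}(R/\fp^n)$ is a finitely generated graded module annihilated by some power of $\fm$, hence of finite length; in particular, it vanishes precisely when its length is zero. This disposes of the subtlety that "zero length" and "zero module" must agree.

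With these observations in hand the proof is a one-line manipulation: plugging $\ell(\fp^{(n)}/\fp^n)=0$ into the formula of Corollary \ref{herzoga} gives $2\ell(R/(\fp^n+(x^n))) - \ell(R/(\fp^n+(x^{2n})))=0$, which is the stated equality, and the reverse direction is the same substitution read backwards. There is essentially no obstacle; the only point requiring care is confirming that the particular $x$ appearing in the statement is the one produced in Corollary \ref{herzoga} (so that the prior formula applies verbatim), which is why the corollary begins with "Adopt the above notation."
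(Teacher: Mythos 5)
Your proposal is correct and is exactly the intended argument: the paper states this corollary with no separate proof precisely because it follows by substituting the finite-length criterion $\fp^{(n)}=\fp^n \Leftrightarrow \ell(\fp^{(n)}/\fp^n)=0$ into the formula of Corollary \ref{herzoga} for the same homogeneous $x$. Your added remarks (the identification $\fp^{(n)}/\fp^n\simeq H^0_{\fm}(R/\fp^n)$ and the finiteness of the length) are the right points to check and match the paper's setup.
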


We state a Frobenius version of Question \ref{herzog}.

\begin{proposition}\label{pro}Let $I\vartriangleleft R:=\mathbb{F}_p[X_1,\ldots,X_m]$. The following holds:
 \begin{enumerate}
\item[i)] $f_{gHK}^{R/I}(n)=e_{gHK}(R/I)q^m$.
\item[ii)] $e_{gHK}(R/I)$ realizes as a length of a module.
In particular, $e_{gHK}(R/I)\in \mathbb{N}_0.$
\item[iii)]  $e_{gHK}(R/I)>0$ if and only if $\pd(R/I)= \dim R$.
\end{enumerate}
\end{proposition}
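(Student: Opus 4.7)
The plan is to leverage the regularity of $R = \mathbb{F}_p[X_1,\ldots,X_m]$. By Fact \ref{kunz} the Frobenius functor $F^n$ is flat over $R$; moreover $F^n(R)$ is free of rank $q^m$ as an $R$-module, since the polynomial ring is free of rank $q^m$ over its subring of $q$-th powers. This freeness is what converts the asymptotic formula defining $e_{gHK}$ into an exact identity.

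For (i) and (ii) my strategy is to apply flat base change for local cohomology. Combining the identification $F^n(R/I) \cong R/I^{[q]}$ recalled after Remark \ref{g} with flatness of $F^n(R)$, and using that $\rad(\fm^{[q]}) = \fm$ so that local cohomology along $\fm^{[q]}$ agrees with that along $\fm$, I obtain
\[
H^0_{\fm}(R/I^{[q]}) \;\cong\; F^n(R) \otimes_R H^0_{\fm}(R/I).
\]
Now $H^0_{\fm}(R/I)$ is a finitely generated $R$-module supported only at $\fm$, hence of finite length. Freeness of $F^n(R)$ then gives
\[
f_{gHK}^{R/I}(n) \;=\; \ell\bigl(H^0_{\fm}(R/I^{[q]})\bigr) \;=\; q^m \cdot \ell\bigl(H^0_{\fm}(R/I)\bigr),
\]
which proves (i) with $e_{gHK}(R/I) = \ell(H^0_{\fm}(R/I))$, and the module realizing $e_{gHK}(R/I)$ in (ii) is $H^0_{\fm}(R/I)$.

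For (iii) I would invoke the graded Auslander--Buchsbaum formula. Since $R$ is regular of dimension $m$, every finitely generated graded $R$-module has finite projective dimension and $\pd_R(R/I) + \depth(R/I) = \depth(R) = m = \dim R$. The formula for $e_{gHK}$ from (ii) gives $e_{gHK}(R/I) > 0$ if and only if $H^0_{\fm}(R/I) \neq 0$, i.e.\ $\fm \in \Ass_R(R/I)$, i.e.\ $\depth(R/I) = 0$. By Auslander--Buchsbaum this is equivalent to $\pd(R/I) = \dim R$.

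The main obstacle is bookkeeping rather than anything conceptual: since $F^n(R)$ is an $(R,R)$-bimodule whose right $R$-action factors through the $q$-th power map, one has to track which action is used at each step of the flat base change, and confirm that the resulting $R$-module structure on $H^0_{\fm}(R/I^{[q]})$ is the one used when computing $f_{gHK}^{R/I}(n)$. Once this identification is verified, the proof reduces to the length calculation above together with Auslander--Buchsbaum.
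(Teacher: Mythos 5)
Your argument is correct, but it follows a different route from the paper's proof of Proposition \ref{pro}. The paper argues by graded local duality: it writes $H^0_{\fm}(R/I^{[q]})$ as the Matlis dual of $\Ext_R^m(R/I^{[q]},R)$, applies the exact functor $F^n(-)$ to the finite free resolution of $R/I$ to identify $\Ext_R^m(R/I^{[q]},R)\simeq F^n\bigl(\coker(A^t)\bigr)$ with $A$ the last matrix in the resolution, and then invokes the Peskine--Szpiro length theorem to get $\ell\bigl(F^n(\coker(A^t))\bigr)=p^{mn}\,\ell\bigl(\coker(A^t)\bigr)$; part (iii) is then read off from whether the resolution has length $m$, i.e.\ whether $\Ext^m_R(R/I,R)$ vanishes. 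You instead commute the Frobenius functor directly past $H^0_{\fm}$, using flatness of $F^n(-)$ over the regular ring together with Fact \ref{locali} (localization commutes with $F^n$, so the \v{C}ech complex on $X_1,\dots,X_m$ is carried to a \v{C}ech complex computing $H^0_{\fm}$ of $F^n(M)$, the radical point $\rad(\fm^{[q]})=\fm$ taking care of the twisted right action), which yields $H^0_{\fm}(R/I^{[q]})\simeq F^n\bigl(H^0_{\fm}(R/I)\bigr)$ and hence $e_{gHK}(R/I)=\ell\bigl(H^0_{\fm}(R/I)\bigr)$; for (iii) you then need Auslander--Buchsbaum plus the equivalence $\depth(R/I)=0\Leftrightarrow H^0_{\fm}(R/I)\neq 0$, which you supply. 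The two answers agree, since $\Ext^m_R(R/I,R)$ is the graded Matlis dual of $H^0_{\fm}(R/I)$. In fact the paper itself records your argument as an alternative in its final section (the closing remark of Section 12, in the local regular setting), so your route is endorsed there; what the paper's duality route buys in addition is the explicit identification of $e_{gHK}(R/I)$ with the cokernel of the transposed last syzygy matrix, which is what makes the computation in Example \ref{1} from a concrete resolution immediate, while your route is shorter and makes the depth-sensitivity in (iii) more transparent. One small streamlining: the length scaling $\ell(F^n(N))=q^m\ell(N)$ for $N$ of finite length supported at $\fm$ follows most cleanly from exactness of $F^n$ and $\ell\bigl(F^n(R/\fm)\bigr)=\ell\bigl(R/\fm^{[q]}\bigr)=q^m$, which avoids any discussion of which side the rank-$q^m$ freeness lives on; your freeness argument is nonetheless fine because such $N$ has all composition factors equal to $R/\fm=\mathbb{F}_p$, so length coincides with $\mathbb{F}_p$-dimension.
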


\begin{proof} By graded local duality \cite[Theorem 3.6.19]{BH}, $H^0_{\fm}(R/I^{[q]})\cong \Hom(\Ext_R^m(R/I^{[q]},R),\mathbb{E})).$
Look at the free resolution of $R/I$: $$\begin{CD}
0 @>>> R^{\ell} @>A>> R^{\ell'} @>>> \cdots  @>>> R^{\mu(I)} @>>>  R.
\\
\end{CD}
$$
 By $(-)^t$ we mean the transpose of a matrix $(-)$. Let $B_n:=F^n(A)^t$. By Fact \ref{kunz}, $F^n(-)$ is exact. Hence, the free resolution of $R/I^{[q]}$  is $$\begin{CD}
0 @>>> R^{\ell} @>B_n>> R^{\ell'} @>>> \cdots  @>>> R^{\mu(I)} @>>>  R.
\\
\end{CD}
$$  Thus,
 \[\begin{array}{ll}
\Ext_R^m(R/I^{[q]},R)&= \coker  (R^{\ell'} \stackrel{B_n} \longrightarrow R^{\ell}  )\\
&\simeq F^n(\coker  (R^{\ell'} \stackrel{A^t} \longrightarrow R^{\ell}  )),
\end{array}\]the last one follows by the exactness of $F^n(-)$.
Due to \cite[Thm.  2]{PS}, one has
 \[\begin{array}{ll}
\ell\left(H^0_{\fm}(F^n(R/I))\right)&= \ell\left(F^n(\coker  (R^{\ell'} \stackrel{A^t} \longrightarrow R^{\ell} ))\right)\\
&=p^{mn}  \ell\left(\coker  (R^{\ell'} \stackrel{A^t} \longrightarrow R^{\ell} )\right).
\end{array}\]Therefore, $$\underset{n\to\infty}\lim \frac{\ell(H^0_{R_+}(R/I^{[p^n]}))}{p^{mn}}=\ell\left(\coker  (R^{\ell'}\stackrel{A^t} \longrightarrow R^{\ell} )\right)\in \mathbb{N}_0.$$Claims follows by this.
\end{proof}

\begin{example}\label{1}
Let $R_p:=\mathbb{F}_p[x_0,\ldots,x_3]$ and $I_p:=(x_0^2, x_1^2, x_0x_2 + x_1x_3)$. The following holds.
\begin{enumerate}
\item[i)]$\underset{p\to\infty}
\lim \frac{\ell (H^0_{\fm_p}(R_p/I_p^{[p^n]}))}{p^{n\dim R_p}}=1$.

\item[ii)]  $\underset{p\to\infty}
\lim \frac{\ell (H^0_{\fm_p}(R_p/I_p^{[p^n]}))}{p^{n\dim R_p}}$ does not depend on $n$.
\item[iii)] $\underset{p\to\infty}\lim\left(\underset{n\to\infty}
\lim \frac{\ell (H^0_{R_+}(R_p/I_p^{[p^n]}))}{p^{n\dim R_p}}\right)=1$.
\end{enumerate}
\end{example}

\begin{proof}
When there is no confusion, we drop the index $p$ and we use $R$ (resp. $I$ and $\fm$) instead of $R_p$ (resp. $I_p$ and $\fm_p$). Set
\begin{equation*}
A_1 := \left(
\begin{array}{ccc}
-x_1 ^2        & x_0^2   & 0 \\
0              & -x_0x_2 & x_1^2  \\
-x_0x_2-x_1x_3 & 0       & x_0^2  \\
-x_1x_2        & -x_0x_3 & x_0x_1  \\
-x_2 ^2        & x_3 ^2  & x_0x_2-x_1x_3  \\
\end{array}  \right)^t
\end{equation*}
and
\begin{equation*}
A_2:= \left(
\begin{array}{ccccc}
x_2 & x_3 & 0  & 0  \\
x_0 & 0   & 0  & x_3\\
0   & -x_1&-x_2& 0  \\
-x_1& x_0 & x_3& x_2 \\
0   & 0   &x_3 & x_1 \\
\end{array} \right)
\end{equation*}

Then, in view of \cite[Example 2.4]{bound}, the free resolution of $R/I$  is of the form $$\begin{CD}
0 @>>> R @>\left(
\begin{array}{ccc}
-x_3 \\
x_2  \\
-x_1 \\
x_0  \\
\end{array}  \right)>> R^4 @>A_2>> R^5  @>A_1>> R^3 @>\left(
\begin{array}{ccc}
x_0^2   \\
x_1^2 \\
x_0x_2+x_1x_3\\
\end{array} \right)^t>>  R.
\\
\end{CD}
$$
By Proposition \ref{pro},

\[\begin{array}{ll}
\ell\left(H^0_{\fm}(F^n(R/I))\right)&= \ell(R/(x_0^{p^n},\ldots ,x_3^{p^n}))\\
&=p^{4n}.
\end{array}\]
The claims follows by this.
\end{proof}

\begin{remark}\label{rem} i) Adopt the above notation. Then $H^1_{\fm}(\bigoplus I^{[{p^n}]}
/I^{[p^{n+1}]})\neq0$.
Indeed, note that $I$ is a $3$-generated  ideal in a $4$-dimensional ring. So, $\dim R/I\neq 0$. The claim follows by the following item (note that the completion  of $R$  is an integral domain and local cohomology behave nicely with completion).

ii) Let $(R,\fm)$ be a complete local  and Cohen-Macaulay domain. Let $I$ be an ideal such that $H^1_{\fm}(\bigoplus I^{[{p^n}]}
/I^{[p^{n+1}]})=0$. Then
$f^{R/I}_{gHK}(n)\neq 0$ (up to a subsequence) if and only if $\dim R/I=0$.
Indeed, if $\dim R/I=0$ this is clear that $f^{R/I}_{gHK}(n)=f^{R/I}_{HK}(n)\neq 0$.  Up to a subsequence, we have
\[\begin{array}{ll}
f^{R/I}_{gHK}(n)\neq 0  &\Longrightarrow H^0_{\fm}(R/I^{[p^n]})\neq 0\\
&\stackrel{1}\Longrightarrow {\vpl}H^0_{\fm}(R/I^{[p^n]})\neq 0\\
&\stackrel{2}\Longrightarrow ({\vpl}H^0_{\fm}(R/I^{[p^n]}))^v\neq 0\\
&\stackrel{3}\Longrightarrow    {\varinjlim}\Ext^{d}_{R}(R/I^{[p^n]},\omega_R) \neq 0    \\
&\stackrel{4}\Longrightarrow \cd(I,\omega_R)=d\\
&\stackrel{5}\Longrightarrow \cd(I,R)=d\\
&\stackrel{6}\Longrightarrow \dim R/I=0,
\end{array}\]where:
\begin{enumerate}
\item[1)]  The assumption  shows $H^0_{\fm}(R/I^{[p^{n+1}]})\longrightarrow H^0_{\fm}(R/I^{[p^n]})$ is surjective. The inverse limit of nonzero modules with surjective maps between them is nonzero.
\item[2)]  Set $(-) ^v:=\Hom(-,E(R/ \fm))$. This is exact as a contravariant functor.
\item[3)] Any Cohen-Macaulay ring which is quotient of a regular ring has a canonical module $\omega_R$. Then  local duality works, see \cite[Theorem 3.5.8]{BH}.
 \item[4)] Denote $\sup\{i:H^{i}_{I}(M)\neq 0\}$
by $ \cd(I,M)$.
  \item[5)] We remark that $\cd(I,R)=\sup\{\cd(I,M)\}\leq \dim R$.
   \item[6)]
  Lichtenbaum–-Hartshorne vanishing theorem  (over a complete local domain) says that $\dim R/I=0$ if and only if $ \cd(I,R)=\dim R$.
 \end{enumerate}
\end{remark}

\section{mores on numerical invariants}

\begin{lemma}\label{ex}
Let $R:=\frac{\mathbb{F}_p[X,Y,Z]}{(X^3+Y^3+Z^3)}$ be the Fermat cubic.
Then $$\fm^{2q+1}H^0_{\fm}(R/I^{[p^n]})=0\neq\fm^{2q}H^0_{\fm}(R/I^{[p^n]}).$$
\end{lemma}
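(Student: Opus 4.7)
The plan is to reduce the entire question to an explicit computation of top and bottom nonzero graded pieces of $R/I^{[q]}$, using a Noether normalization of the Fermat cubic. The natural ideal to work with in this context is $I=(x,y)$: since $z^{3}=-x^{3}-y^{3}\in I$, the quotient $R/I$ is a zero-dimensional (in fact Artinian) $\mathbb{F}_{p}$-algebra, hence $I$ is $\mathfrak{m}$-primary. Consequently $R/I^{[q]}=R/(x^{q},y^{q})$ is itself Artinian, and $H^{0}_{\mathfrak{m}}(R/I^{[q]})=R/I^{[q]}$ as graded modules. So the whole problem becomes: determine the top and the near-top graded pieces of $R/(x^{q},y^{q})$.

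For that, I would exploit the cubic relation. Since $z^{3}=-x^{3}-y^{3}$, the ring $R$ is free as an $\mathbb{F}_{p}[x,y]$-module on the basis $\{1,z,z^{2}\}$. Base-changing modulo $(x^{q},y^{q})$ gives an $\mathbb{F}_{p}$-vector space decomposition
\[
R/(x^{q},y^{q})\;\cong\;\bigl(\mathbb{F}_{p}[x,y]/(x^{q},y^{q})\bigr)\cdot\{1,z,z^{2}\},
\]
with basis $\{x^{i}y^{j}z^{k}:0\le i,j\le q-1,\ 0\le k\le 2\}$. The graded top degree occurring in this basis is $(q-1)+(q-1)+2=2q$, and the minimal degree is $0$.

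From this decomposition both halves of the statement fall out immediately. Since $R/I^{[q]}$ is concentrated in degrees $\le 2q$ and $\mathfrak{m}$ strictly raises degree, one has
\[
\mathfrak{m}^{2q+1}\bigl(R/I^{[q]}\bigr)\;\subseteq\;\bigl(R/I^{[q]}\bigr)_{\ge 2q+1}\;=\;0,
\]
which gives the vanishing. For nonvanishing, the element $x^{q-1}y^{q-1}z^{2}$ belongs to $\mathfrak{m}^{2q}$ and is a basis vector of $R/(x^{q},y^{q})$, so $x^{q-1}y^{q-1}z^{2}\cdot 1\in \mathfrak{m}^{2q}H^{0}_{\mathfrak{m}}(R/I^{[q]})$ is nonzero.

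There is no real obstacle; the only thing requiring a little care is the free-module decomposition of $R$ over its Noether normalization, which has to be checked in order to conclude that $x^{q-1}y^{q-1}z^{2}$ remains a nonzero class in $R/(x^{q},y^{q})$ (i.e.\ no hidden relation from the Fermat equation can kill this top-socle element). Once that is in place, the remaining inequalities are purely degree bookkeeping.
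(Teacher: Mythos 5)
Your proof is correct, and it takes a genuinely different route from the paper's. (A preliminary remark: the lemma as stated never names $I$; your choice $I=(x,y)$ is the intended one, as the paper's later applications of this lemma confirm.) The paper argues both halves directly in the quotient: for non-vanishing it supposes a relation $Z^2X^{q-1}Y^{q-1}+F(X^3+Y^3+Z^3)=EX^q+GY^q$ in the polynomial ring and rules it out by a case analysis on monomial terms, and for vanishing it takes a monomial $x^iy^jz^k$ of degree $2q+1$, uses the cubic relation to reduce to $k<3$, and then checks by degree count that $i\geq q$ or $j\geq q$. You instead pass to the Noether normalization $A:=\mathbb{F}_p[x,y]\subseteq R$: since $X^3+Y^3+Z^3$ is monic in $Z$, $R$ is graded-free over $A$ with basis $\{1,z,z^2\}$, so $(x^q,y^q)R$ decomposes componentwise and $R/I^{[q]}$ has the explicit monomial basis $\{x^iy^jz^k:0\leq i,j\leq q-1,\ 0\leq k\leq 2\}$; both the vanishing (top degree is $2q$) and the non-vanishing (the basis vector $x^{q-1}y^{q-1}z^2\in\fm^{2q}$ acts nontrivially on $\bar 1$) then follow by pure degree bookkeeping, using that $I$ is $\fm$-primary so that $H^0_{\fm}(R/I^{[q]})=R/I^{[q]}$. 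Your route buys more with less: the freeness over $A$ replaces the paper's somewhat delicate (and tersely written) term-by-term analysis of the hypothetical relation, it handles both inequalities uniformly, and as a byproduct it gives the full Hilbert function and socle of $R/I^{[q]}$ (e.g.\ $\ell(R/I^{[q]})=3q^2$), which the paper's argument does not; the paper's proof, on the other hand, stays entirely inside elementary manipulations with the defining equation and does not require observing the free module structure. The only point that genuinely needed care in your argument --- that no hidden consequence of the Fermat equation kills $x^{q-1}y^{q-1}z^2$ in $R/(x^q,y^q)$ --- is exactly what the freeness of $R$ over $\mathbb{F}_p[x,y]$ settles, so your proof is complete as written.
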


\begin{proof}
First we show $z^2x^{q-1}y^{q-1}\notin\Ann_RH^0_{\fm}(R/I^{[p^n]})=R/I^{[p^n]}$. Suppose on the contrary that
there are $E,F$ and $G\in \mathbb{F}_p[X,Y,Z]$ such that
$$Z^2X^{q-1}Y^{q-1}+F(X^3+Y^3+Z^3)=EX^q+GY^q\quad(\ast)$$
By looking at the monomial terms of $\{E,F,G\}$ and applying the monomial grading
on both sides of $(\ast)$ there are the following three possibilities:
\begin{enumerate}
\item[1)]$Z^2X^{q-1}Y^{q-1}+F_1X^3=E_1X^q+G_1Y^q$, or
\item[2)]$Z^2X^{q-1}Y^{q-1}+F_1Y^3=E_1X^q+G_1Y^q$, or
\item[3)]$Z^2X^{q-1}Y^{q-1}+F_1Z^3=E_1X^q+G_1Y^q$,
\end{enumerate}
where $\{E_1,F_1,G_1\}$ are the monomial terms of  $\{E,F,G\}$. Here we adopt zero as a monomial.
The third one occurs, if its both sides are zero. As $Z^3\nmid Z^2X^{q-1}Y^{q-1}$, this is not the case. Suppose 2) holds.
 This implies the following equalities:
  \begin{enumerate}
\item[i)]$F_1X^3=E_2X^q+G_2Y^q$, and
\item[ii)]$F_1Z^3=E_3X^q+G_3Y^q$, and
\item[iii)]$F_1=-Z^2X^{q-1}Y^{q-4}$,
\end{enumerate}where $\{E_i,G_i\}$ are the monomial terms of  $\{E,G\}$.
 Putting iii) along with ii), implies that $$Z^2X^{q-1}Y^{q-4}=E_3X^q+G_3Y^q.$$This equation has no solution in the polynomial ring and by this we get a contradiction that we search for it. So, 2) is not the case.
Similarly, 1) yields a contradiction. In sum,  $$z^2x^{q-1}y^{q-1}\notin\Ann_RH^0_{\fm}(R/I^{[p^n]}).$$

 To show $\fm^{2q+1}\subseteq I^{[p^n]}$, without loss of the generality let $f:=x^iy^j z^k\in\fm^{2q+1}$. Due to the relation $x^3+y^3+z^3=0$ and by working with terms separately,
we may assume that $k<3$.  If $i\geq q$ we get  $f\in (x^q,y^q)=I^{[q]}$ and the claim follows. Then we can assume that
$i<q$. As $k<3$ we get from $i+j+k=2q+1$ that $j\geq q$ and so $f\in (x^q,y^q)=I^{[q]}$. In particular, $$\fm^{2q+1}H^0_{\fm}(R/I^{[p^n]})=\fm^{2q+1}\left(\frac{R}{I^{[p^n]}}\right)=0.$$
\end{proof}

\begin{remark}If $\fm^{cq}H^0_{\fm}(F^{n}(R/I))=0$  for all $q\gg 0$ it follows for all $q$ after possible enlarging of $c$.
Clearly, $c(I)\leq  b(I).$ This may be strict, as the next result says.\end{remark}

\begin{example}\label{dis}
Let $R:=\frac{\mathbb{F}_2[X,Y,Z]}{(X^3+Y^3+Z^3)}$ be the Fermat cubic and $J:=(x^2,y^2)$. Then
$$c(J)=3<4 =b(J).$$
\end{example}

\begin{proof}
Let $I:=(x,y)$. Then $J^{[q]}=I^{[q+1]}$ and in view of Lemma \ref{ex}
$$\fm^{2q+3}H^0_{\fm}(R/J^{[q]})=0\neq\fm^{2q+2}H^0_{\fm}(R/J^{[q]})\quad (\ast).$$Then for all $q\geq 2^2$, $$\fm^{3q}H^0_{\fm}(R/J^{[p^n]})=0\neq\fm^{2q}H^0_{\fm}(R/J^{[p^n]})\quad (\ast,\ast).$$Thus, $c(J)=3$. Now we apply  $(\ast)$ in the case $q=2$. This yields $$\fm^{4p}H^0_{\fm}(R/J^{[p]})=0\neq\fm^{3p}H^0_{\fm}(R/J^{[p]}).$$This along with $(\ast,\ast)$ shows  $b(J)=4$.
\end{proof}

We continue by  connecting the invariants to the  tight closure theory.
Recall that the Frobenius closure of an ideal $I$ is $I^{F}:=\{x\in R:x^q\in I^{[q]}\quad\exists q> 0\}.$

\begin{discussion}\label{f}
Let $I$ be an $\fm$-primary ideal of an standard graded algebra  $R$ over a field of prime characteristic.
The following holds:\\
i) Let $\emph{c}:=c(I)$. Then $R_{\geq \emph{c}}\subseteq I^{F}$.  \\ii) If $\fm^{\alpha q+\beta}H^0_{\fm}(F^{n}(R/I))=0$ for some  $\alpha$ and $\beta$ that do not depending  to $q$, then  $R_{\geq \alpha}\subset I
^\ast.$
\end{discussion}

 \begin{proof}
 i) Look at $1+ I^{[p^n]}\in R/I^{[p^n]}=H^0_{\fm}(R/I^{[p^n]})$, where $\fm$ is the homogeneous maximal ideal .
Multiplying by $\fm^{\emph{c}q}$, we deduce $\fm^{\emph{c}q}\subset I^{[q]}$. Let $x\in R_{\geq \emph{c}}=R_{1}^\emph{c}R$. Then $x=\sum y_i^\emph{c}x_i$ where $y_i\in R_1$ and $x_i\in R$. Thus
$x^q=\sum y_i^{\emph{c}q}x_i^q\in   \fm^{\emph{c}q}\subset I^{[q]},$ i.e., $x\in I^{F}$.

ii)  Without loss of the generality we may assume that $\dim R\neq 0$. We take $x\in R_{\geq \alpha}=R_{1}^\alpha R$  and look at $\fm^{\alpha q+\beta}\subset I^{[q]}$. Fix $c\in \fm^{\beta}$ which is not in the union of the minimal prime ideals.
Then $x=\sum y_i^\alpha x_i$ where $y_i\in R_1$ and $x_i\in R$. So,  $y_i^{q\alpha} x_i^q\in \fm^{\alpha q}$ and $cx^q=\Sigma cy_i^{q\alpha} x_i^q\in \fm^{\beta}\fm^{\alpha q}\subset I^{[q]},$ i.e., $x\in I^{\ast}$.\end{proof}

Despite of triviality of the (LC) property when $I$ is $\fm$-primary,
 computing $b(I)$ is not so easy for us.
We apply a powerful computational method of Brenner \cite{Holgerl} to find upper bounds on $b(I)$. The bound depends on degree data coming from the ideal and the ring.

\begin{remark}\label{holgercomp}Let $C$ be a smooth plane curve defined by the equation $f=0$. Denote the coordinate ring  of  $C$ by $R$. There is  an uniform bound on the (LC)-exponent of two-generated ideal $(g,h)R$ by fixing the degree of $g$ and $h$. If $\Ht(g,h)=2$, the bound depends only on  $\{\deg f, \deg g, \deg h\}$.
\end{remark}

\begin{proof}Over smooth curves torsion-free sheaves are vector bundle. Without loss of the generality we may assume that $I$ is not principal.
As $I:=(f,g)$ is two generated the corresponding syzygy bundle is  a line bundle. Denote it by $\Syz(I)$ and set $(d,d_1,d_2):=(\deg f, \deg g, \deg h)$. Set $e:=\frac{\deg (\widetilde{I})}{d}$ which is independent of $q$ and look at the exact sequence of torsion-free sheaves:$$ 0 \longrightarrow \Syz(I) \longrightarrow \bigoplus_{i=1}^n \mathcal{O}(-d_{i}) \stackrel{f,g}\longrightarrow
\widetilde{I}\longrightarrow 0.  $$Then $\deg(\Syz(I))=-(d_1+d_2+e)d$. We note that $e=0$ when $I$ is primary to the maximal ideal.
Every line bundle is strongly semistable. Thus $$\overline{\mu}_{min}(\Syz(I))=\mu_{min}(\Syz(I))=\deg(\Syz(I))=-(d_1+d_2+e)d.$$ Denote the homogeneous maximal ideal by $\fm$.
Recall  by Lemma \ref{s2} that $H^0_{\fm}(R/I^{[p^n]})_m
\simeq H^1_{\fm}(I^{[p^n]})_m.$ Then, by the same lines as \cite{Holgerl} and for all \[\begin{array}{ll}
m&> -q\frac{\overline{\mu}_{min}(\Syz(I))}{\deg C}+\frac{\deg(\omega)}{\deg C}+1\\
&=q(d_1+d_2+e)+\frac{(d-1)(d-2)-2}{d}+1
\end{array}\]we have
$H^0_{\fm}(R/I^{[p^n]})_m=0$. Set
\begin{enumerate}
\item[] $\alpha:=d_1+d_2+e$, and
\item[] $\beta:=\frac{(d-1)(d-2)-2}{d}+1$.
\end{enumerate}
 Now Lemma \ref{mainlemma} yields $\fm^{\alpha q+\beta}H^0_{\fm}(R/I^{[p^n]})=0.$
\end{proof}

\begin{discussion}
If $R$ is $\mathbb{Z}$-algebra and $I$  is an ideal, there is a natural way
to  study the (LC) exponent  by focusing on the asymptotic   behavior   of the sequence  $\{ b(I_p):\textit{ p is prime}\}$ when $R_p$ and $I_p$ are
reduction mod $p$.
\end{discussion}

\begin{example}
i) Let $R:=\frac{\mathbb{F}_p[X,Y,Z]}{(X^3+Y^3+Z^3)}$ be the Fermat cubic.
We use lowercase letters here to elements in $R$. Look at $I:=(x,y)$. Denote the homogeneous maximal ideal by $\fm$. In view of Lemma \ref{ex},
$b(I)=c(I)=3$. This is well-known  that $z^2\in I^\ast\setminus I^F$.
In particular, the bounds  presented by Discussion \ref{f}  and Remark \ref{holgercomp} achieved. However, as Example \ref{none}
declares,  the bound may be strict, even we deal with $\PP^1$.

ii) Let $R$  ba as i). Look at $I:=xR$. As $R$ is Cohen-Macaulay and $x^q$  is a parameter element, we get
that $$\inf\{i:H^i_{\fm}(R/I^{[p^n]})\neq0\}=\depth(R/x^qR)=\depth R-1=1.$$So, $b(I)=0$.

iii) Let $R:=\frac{\mathbb{Z}[X,Y,Z]}{(XY-Z^2)}$ and $I:=(x,y)$.  We use $R_p$ and $I_p$ to
reduction mod $p$. Denote the homogeneous maximal ideal by $\fm_p$. Here, we compute  $\underset{p\to\infty}
\lim b(I_p)$.
We claim that $$\fm_p^{2q}H^0_{\fm_p}(R_p/I_p^{[p^n]})=0\neq\fm_p^{2q-1}H^0_{\fm_p}(R_p/I_p^{[p^n]}).$$
By a same reasoning as presented in Lemma \ref{ex}, we have $zx^{q-1}y^{q-1}\notin\Ann_RH^0_{\fm}(R_p/I_p^{[p^n]})$. In order to check the left hand side, recall that $H^0_{\fm_p}(R_p/I_p^{[p^n]})=R_p/I_p^{[p^n]}$ and let $f:=x^iy^j z^k\in\fm_p^{2q}$. Due to the relation $z^2=xy$
we assume that $k\in\{0,1\}$.  If $i\geq q$ we get  $f\in (x^q,y^q)$ and the claim follows. Then we can assume that
$i<q$. As $k<2$ we get from $i+j+k=2q$ that $j\geq q$ and so $f\in (x^q,y^q)$. This proves the claim. So, $\underset{p\to\infty}
\lim b(I_p)=2.$
\end{example}

We need the following result of Herzog and Hibi.

\begin{lemma}\label{fun}
(\cite[Theorem 4.1]{hhh}) Given a bounded increasing function $f : \mathbb{N} \to  \mathbb{N}_0$. There exists
a monomial ideal $I$ in a polynomial ring $R$ over any field such that $\depth (R/I^k) = f(k)$ for all $k$.
\end{lemma}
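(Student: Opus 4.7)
The plan is to realize the target function $f$ by building $I$ out of elementary monomial ``building blocks'' combined via sums in disjoint sets of variables. The essential structural input is a Künneth-type depth formula: if $I_1 \lhd A = k[x_1,\ldots,x_m]$ and $I_2 \lhd B = k[y_1,\ldots,y_n]$ live in disjoint variable sets, and $R := A \otimes_k B$, then $(I_1+I_2)^k = \sum_{i+j=k} I_1^i I_2^j$ and a careful computation (via the Tor decomposition coming from the disjoint variable split) yields
$$\depth_R\bigl(R/(I_1+I_2)^k\bigr) = \min_{i+j=k}\bigl\{\depth_A(A/I_1^i) + \depth_B(B/I_2^j)\bigr\},$$
with the convention $\depth(A/A) = \infty$. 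This converts the problem of realizing a complicated $f$ into realizing simple ``jump'' functions.

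Next, I would manufacture, for each pair $(n_0,a)$, a monomial ideal $J_{n_0,a}$ whose depth function is constant before step $n_0$, then jumps up by $a$ and remains constant thereafter. Convenient candidates are powers of the graded maximal ideal $\fm^{n_0}$ inside a small auxiliary polynomial ring (whose powers have completely transparent depth behavior), or alternatively edge ideals of suitably chosen bipartite graphs, for which depths of powers are controllable through their polymatroidal or matroidal structure. By shifting indices (embedding in additional variables) one can freely translate the jump location to any prescribed $n_0$.

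Then, given bounded non-decreasing $f$, write $f = c + \sum_i \epsilon_i$ where each $\epsilon_i$ is an elementary step jumping at some $n_i$ with height $a_i$; since $f$ is bounded, finitely many such summands suffice. Allocate a fresh disjoint set of variables for each index $i$, take the corresponding $J_{n_i,a_i}$ there, and set $I := \sum_i J_{n_i,a_i}$ inside the ambient polynomial ring on the union of all these variable sets. Iterating the disjoint-sum depth formula across the $i$'s gives $\depth(R/I^k) = f(k)$ for every $k$.

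The main obstacle is double: first, justifying the disjoint-sum formula for higher powers, where the min is taken over all decompositions $i_1 + \cdots + i_s = k$ and one needs tight control of the local cohomology of $\bigoplus_{i_1+\cdots+i_s=k} I_1^{i_1}\cdots I_s^{i_s}$ via a Mayer--Vietoris / spectral sequence argument; and second, constructing building blocks that are both monotone \emph{and} admit prescribed jump data simultaneously at a single point, without unintended intermediate drops in $\depth(R/J_{n_0,a}^k)$ as $k$ crosses $n_0$. The second difficulty is the more delicate, and is where the genuine combinatorial content of the Herzog--Hibi construction lies.
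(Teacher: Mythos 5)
The paper does not actually prove this lemma: it is quoted verbatim from Herzog--Hibi \cite[Theorem 4.1]{hhh}, so your argument has to stand on its own, and as written it is an outline whose two load-bearing steps are missing. First, the ``K\"unneth-type'' depth formula you take as the structural input is neither routine nor correct as stated: with your convention it already fails at $k=1$, where the classical fact is $\depth_R\left(R/(I_1+I_2)\right)=\depth_A(A/I_1)+\depth_B(B/I_2)$, and for higher powers the correct statement involves extra correction terms of the shape $\depth_A(A/I_1^i)+\depth_B(B/I_2^j)+1$ with $i+j=k+1$; such a formula for $\depth$ of powers of sums of ideals in disjoint variables is a genuine theorem (established only years after Herzog--Hibi, in work of H\`a, Trung and Trung), obtained from a delicate filtration of $\sum_{i+j=k}I_1^iI_2^j$, not from ``a careful Tor computation''. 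Second, and more seriously, the building blocks do not exist in the form you propose. Since $f$ is increasing, you need monomial ideals whose depth function \emph{jumps up} at a prescribed power; but powers of the irrelevant maximal ideal give $\depth S/(\fm^{n_0})^k=\depth S/\fm^{n_0k}=0$ for every $k\geq 1$, a constant function with no jump at all, while edge ideals (and polymatroidal ideals) are, in all known results, sources of \emph{non-increasing} depth functions --- the wrong direction. Producing a monomial ideal whose depth function strictly increases at a chosen spot is precisely the nontrivial content of the cited theorem, and it is the part you explicitly defer.

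Even granting both ingredients, the final assembly is not the one-line iteration you claim: under a min-over-decompositions formula the jump locations of the blocks interact (two blocks each jumping at $n_0$ produce, through the minimum over $i_1+i_2=k$, a jump only near $2n_0$ rather than a doubled step at $n_0$), so the jump data of the blocks must be recalibrated to compensate, and with the corrected ``$+1$'' terms this bookkeeping becomes more involved still. In short, the plan is plausible in spirit --- the minimum of sums of non-decreasing functions is again non-decreasing, so nothing is structurally absurd --- but none of its three steps (the sum formula for powers, the existence of single-jump monomial ideals, and the combination arithmetic) is established, so this does not yet constitute a proof of the lemma.
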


\begin{example}\label{uni}
Let $I$  be a graded ideal in a polynomial ring $R$ over a field of prime characteristic. It may be $\sup\{ b(I^n):n\in \mathbb{N}\}=\infty$.
\end{example}
The following argument works for the next result too.
\begin{proof}
 Let $n_0$ be any positive integer. Look at
 \begin{equation*}f(n)=\left\{
\begin{array}{rl}
0& \qquad\text{ \ \ if }n\leq n_0 \\
1&\qquad\text{  \ \ if } n>n_0
\end{array}  \right.
\end{equation*} In view of Lemma \ref{fun}, there is a homogeneous ideal  $I$ in  a polynomial ring $R$ such that $\depth (R/I^{n})=f(n)$. Suppose on the contrary that there is $\ell$ such that $\sup\{ b(I^n)\}_{n=1}^{\infty}< \ell$.
 Set $\Gamma:=\{ \Ass(R/I^n):n\in \mathbb{N}\}$. By  \cite{Bro}, we know $|\Gamma|<\infty$. As $R$ is regular, $\Ass(M)=\Ass\left(F^n(M)\right)$. In particular, $$|\{ \Ass(R/F^m(I^n)):n,m\in \mathbb{N}\}|=|\Gamma|<\infty.$$
 In the light of  Lemma \ref{linear} and Discussion \ref{dislin}, there is a uniform $s\in \fm^{\ell}\setminus\bigcup_{\fp\in\Gamma}\fp$ such  that
$$f^{R/I^n}_{gHK} = 2f^{R/I^n+(s)}_{HK }- f^{R/I^n+(s^2)}_{HK} .$$As $R$ is regular,
Hilbert-Kunz multiplicity should be colength.
Set $R_1:=R/(s)$  (resp. $R_2:=R/(s^2)$) and $I_1:=IR_1$  (resp. $I_2:=IR_2$). Denote the Hilbert-Samuel polynomial of a graded $A$-module $M$ by
$P_A^M$. Let $n\gg 0$. These imply that
\[\begin{array}{ll}
e_{gHK}(R/I^n)&= 2\ell (\frac{R}{I^n+(s)})- \ell (\frac{R}{I^n+(s^2)})\\
&= 2\ell (\frac{R_1}{I_1^n})- \ell (\frac{R_2}{I_2^n})\\
&= 2P_{R_1}^{I_1}(n)-P_{R_2}^{I_2}(n).\\
\end{array}\]Thus, $g(n):=e_{gHK}(R/I^n)$ is of polynomial type. Keep in mind any module over $R$ is of finite projective dimension. By Auslander-Buchsbaum formula \cite[Theorem 1.3.3]{BH},
$$\pd(R/I^n) + \depth(R/I^n) = \dim R.$$ In view of Proposition \ref{pro}(iii)
$$g(n) = e_{gHK}(R/I^n) = 0 \Longleftrightarrow f(n)=\depth(R/I^n) \neq 0.$$
In particular, $g\neq 0$.   As $g$ is nonzero, the vanishing set
 of $g$ is  a finite set.  So,  the non-vanishing set
 of $f$ is finite. This is a contradiction.
\end{proof}

 If $I$ and $J$ projectively have the same integral closure, then in view of \cite{REES},
$e(R/IJ) = F(e(R/I), e(R/J))$ for some polynomial $F$, where $e(-)$ is the Hilbert-Samuel multiplicity.

\emph{Proof of Example \ref{remhol}.} First recall that a closure operation
 is a map send an ideal $I$ to another ideal denoted by $I^\textit{c}$ such that $I\subset I^\textit{c}$, $I^\textit{c}= (I^\textit{c})^\textit{c}$  and the map is order-preserving.
Let us recall that $I$ and $J$ projectively have the same closure operation, if $(I^n)^\textit{c}=(J^m)^\textit{c}$ for some $n$ and $m$ in $\mathbb{N}$.
Suppose on the contrary that there is
a polynomial function $F$ such that $$F(e_{gHK}(R/I), e_{gHK}(R/J)) = e_{gHK}(R/IJ)\quad(\ast)$$  and look for a contradiction.
Set \begin{equation*}
g(n)=\left\{
\begin{array}{rl}
0& \qquad\text{ \ \ if }n=1 \\
1&\qquad\text{  \ \ if } n>1
\end{array}  \right.
\end{equation*}In view of Lemma \ref{fun}, there is a homogeneous ideal  $I$ in the 3-dimensional ring $R:=\mathbb{F}[X,Y,Z]$ such that $\depth (R/I^{n})=g(n)$.
Let $J := I^{n-1}$ and look at
$$f(n) := F(e_{gHK}(R/I), e_{gHK}(R/I^{n-1})).$$
This is a polynomial in one variable. Again, we combine  Auslander-Buchsbaum formula  with Proposition \ref{pro}(iii) to  get
$$f(n) \stackrel{(\ast)}= e_{gHK}(R/I^n) = 0 \Longleftrightarrow \depth(R/I^n) \neq 0.$$
Note that $\depth (R/I)=0$, i.e., $f\neq0$.  As $f$ is nonzero, the vanishing set  of $f$ is finite, but  $\{n:\depth (R/I^n)\neq0\}$ is much more than a finite set. This is a contradiction that we search for it.$\hspace{15 mm}\square$

\begin{fact}Let $I$ be an ideal of a (graded) local ring $(R,\fm)$.
Recall from Fact \ref{sat} that
$I^{sat}$ computed as the intersection of all primary to nonmaximal
prime ideals of  $I$. Let $x\in\fm$ but not in the other associated primes of  $I^{[q]}$. Then $(I^{[q]}:x)\subset
(I^{[q]})^{sat}$. Suppose now that $$\fm^{\alpha q+\beta}H^0_{\fm}(R/I^{[q]})=0.$$ Let $c\in\fm^ {\beta} $ and
take $0\neq x\in\fm^ {\alpha} $  but not in the other associated primes of  $I^{[q]}$.
This implies that $(I^{[q]}:cx^q)=
(I^{[q]})^{sat}.$ Now assume $\emptyset\neq\bigcap\min\left((I^{[q]})^{sat}\right),$ i.e. $\rad(I)\neq \fm$, and let $\frak q$ be in.
In this case one can pick $x\in\fm^ {\alpha}\setminus I^\ast $. In particular, $1\notin(I^{[q]}:cx^q)$. Then $$\ell\left(\frac{R_{\frak q}}{(I^{[q]}:cx^q)_{\frak q}}\right)=\ell\left(\frac{R_{\frak q}}{(I^{[q]})^{sat}R_{\frak q}}\right)<\infty.$$
\end{fact}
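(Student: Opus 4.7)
The fact bundles together four assertions: the inclusion $(I^{[q]}:x)\subset(I^{[q]})^{sat}$; the equality $(I^{[q]}:cx^q)=(I^{[q]})^{sat}$ under the (LC) hypothesis; the existence of a usable $x\in\fm^{\alpha}\setminus I^\ast$ giving $1\notin(I^{[q]}:cx^q)$; and the finite-length equality after localizing at $\fq$. My plan is to treat these in order, since each later step reuses the earlier one.

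For the first inclusion, I would fix a minimal primary decomposition $I^{[q]}=\bigcap Q_i$ with $\sqrt{Q_i}=\fp_i$ and invoke Fact \ref{sat} to identify $(I^{[q]})^{sat}=\bigcap_{\fp_i\neq\fm}Q_i$. If $ax\in I^{[q]}$ and $x\notin\fp_i$ for every nonmaximal $\fp_i$, then $x$ is a nonzerodivisor on $R/Q_i$ for those $i$, so $a\in Q_i$; intersecting gives $a\in(I^{[q]})^{sat}$. For the equality step, apply the same argument to $cx^q$ (which still misses every nonmaximal $\fp_i$, these being prime) to obtain $\subset$; for $\supset$, translate the hypothesis via $H^0_\fm(R/I^{[q]})=(I^{[q]})^{sat}/I^{[q]}$ (Definition \ref{sat}) into $\fm^{\alpha q+\beta}(I^{[q]})^{sat}\subset I^{[q]}$, and observe that $cx^q\in\fm^{\alpha q+\beta}$, so $(I^{[q]})^{sat}\subset(I^{[q]}:cx^q)$.

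For the existence of $x\in\fm^{\alpha}\setminus I^\ast$ when $\rad(I)\neq\fm$, I would first record the standard containment $I^\ast\subset\sqrt{I}$: if $x\in I^\ast$ and $c\in R^\circ$ with $cx^q\in I^{[q]}\subset\sqrt{I}^{[q]}\subset\sqrt{I}$, then localizing at any minimal prime $\fp$ of $R$ contained in no component of $I$ forces $x\in\sqrt{I}$. Hence $\rad(I^\ast)=\rad(I)\neq\fm$, so $\fm^\alpha\not\subset I^\ast$. The nonmaximal associated primes of $I^{[q]}$ form a finite set of proper subsets of $\fm$, so a standard prime-avoidance argument (over an infinite field, or with homogeneous generators in the graded setting) produces a single $x\in\fm^\alpha$ avoiding all of them and also avoiding $I^\ast$. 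Choose $c\in\fm^\beta\cap R^\circ$ (possible since $R^\circ$ meets any ideal not contained in the union of minimal primes). Then $cx^q\in I^{[q]}$ for all $q$ would place $x$ into $I^\ast$, a contradiction; thus $1\notin(I^{[q]}:cx^q)$ for infinitely many $q$.

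Finally, for the length equality: by the equality $(I^{[q]}:cx^q)=(I^{[q]})^{sat}$ established above, localization gives $(I^{[q]}:cx^q)_{\fq}=((I^{[q]})^{sat})_{\fq}$. Since $\fq$ is nonmaximal and minimal over $(I^{[q]})^{sat}$, the primary components surviving after localization at $\fq$ are precisely those with $\fp_i\subset\fq$, i.e., the primary component at $\fq$ itself; hence $((I^{[q]})^{sat})_{\fq}=(I^{[q]})_{\fq}$ is $\fq R_\fq$-primary, which gives the finite length. The principal obstacle I anticipate is the simultaneous choice of $x$ and $c$ in the third step: guaranteeing $x\notin I^\ast$ while also avoiding the (potentially infinite, or just finite, depending on context) associated primes requires either a finiteness input (as in Lemma \ref{avoid}) or a countable prime-avoidance hypothesis; the rest is bookkeeping with primary decomposition and the (LC) translation.
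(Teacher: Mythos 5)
Your skeleton---identifying $(I^{[q]})^{sat}$ with the intersection of the primary components at the nonmaximal associated primes, proving $(I^{[q]}:cx^q)\subseteq(I^{[q]})^{sat}$ componentwise, extracting the reverse inclusion from $\fm^{\alpha q+\beta}(I^{[q]})^{sat}\subseteq I^{[q]}$ together with $cx^q\in\fm^{\alpha q+\beta}$, and localizing at a minimal prime $\fq$ of the saturation so that only the $\fq$-primary component survives---is exactly the reasoning the paper leaves implicit, and those parts are sound. The genuine gap is in the inclusion $(I^{[q]}:cx^q)\subseteq(I^{[q]})^{sat}$: your parenthetical claim that $cx^q$ ``still misses every nonmaximal $\fp_i$, these being prime'' only controls the factor $x^q$. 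Nothing in $c\in\fm^{\beta}$, nor in your later choice $c\in\fm^{\beta}\cap R^{\circ}$ (which only avoids the minimal primes of $R$), prevents $c$ from lying in some nonmaximal associated prime $\fp_i$ of $I^{[q]}$, and then the componentwise argument breaks and the equality can actually fail: for $R=k[u,v]$ and $I=(u^2v)$ one has $H^0_{\fm}(R/I^{[q]})=0$, so the (LC) hypothesis holds vacuously for any $\alpha,\beta$, yet for $\beta\geq 1$ the choices $c:=u^{\beta}$ and $x:=(u+v)^{\alpha}$ give $(I^{[q]}:cx^{q})=(u^{2q-\beta}v^{q})\supsetneq(u^{2q}v^{q})=(I^{[q]})^{sat}$ for all $q>\beta$. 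The repair is to require $c\in\fm^{\beta}$ to avoid the finitely many nonmaximal associated primes of $I^{[q]}$ as well (possible by prime avoidance, since no such prime contains $\fm^{\beta}$); this extra genericity of $c$ is in fact needed for the statement itself, which the paper phrases loosely.

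Two smaller points. First, your tight-closure route to $1\notin(I^{[q]}:cx^q)$ only yields the conclusion for infinitely many $q$, as you yourself note; but once the displayed equality is available it holds for every $q$ with no tight closure at all, because $\rad(I)\neq\fm$ forces a nonmaximal minimal prime of $I^{[q]}$, hence a proper primary component, so $(I^{[q]})^{sat}\neq R$. The condition $x\notin I^{\ast}$ matters only if one insists on a single $x$ serving all $q$ simultaneously, and your closing remark that such a uniform choice needs a finiteness input (as in Lemma \ref{avoid} and Discussion \ref{dislin}) is correct. Second, your justification of $I^{\ast}\subseteq\sqrt{I}$ by ``localizing at a minimal prime of $R$ contained in no component of $I$'' does not work as written, since the multiplier $c\in R^{\circ}$ need not avoid the minimal primes over $I$; simply quote the standard containments $I^{\ast}\subseteq\overline{I}\subseteq\rad(I)$. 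With these adjustments your argument is complete and coincides with the paper's intended reasoning.
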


Motivated from \cite{HH3}  we ask:

\begin{question}
What is the asymptotic behavior of $\frac{\ell\left(R_{\frak q}/(I^{[q]})^{sat}R_{\frak q}\right)}{q^{\dim R_{\frak q}}}?$
\end{question}

\begin{example}\label{none}Let $R:=\overline{\mathbb{F}}_p[X,Y]$ and let  $I:=(XY,X^n)$.
The following holds:

\begin{enumerate}
\item[i)] $c(I)=b(I)=n$.
In particular, the bound given by Remark \ref{holgercomp} may be strict.
\item[ii)]$\underset{q\to\infty}
\lim \frac{\ell\left(R_{\frak q}/(I^{[q]})^{sat}R_{\frak q}\right)}{q^{\dim R_{\frak q}}}=1$.
\item[iii)] $f_{gHK}^{R/I}(n)=(n-1)q^2$, and so $e_{gHK}(R/I)=n-1$.
\end{enumerate}
\end{example}

\begin{proof} First note that $I^{[q]}=(X^qY^q,X^{nq})$. Its primary decomposition is given by $$I^{[q]}=(X^q)\cap(Y^q,X^{nq}).$$
In view of Fact \ref{sat}, $(I^{[q]})^{sat}=(X^q)$.

i) Denote the homogeneous maximal ideal by $\fm$. Recall that $H_{\frak m}^0(R/I^{[q]})=\frac{(I^{[q]})^{sat}}{I^{[q]}}=\frac{(X^q)}{(X^qY^q,X^{nq})}.$ One has
$c(I)=b(I)=n$ provided: $$\fm ^{(n-1)q}H_{\frak m}^0(R/I^{[q]})\neq0=\fm ^{nq}H_{\frak m}^0(R/I^{[q]}).$$Clearly, $(X^{(n-2)q-1}Y)X^q\in\fm ^{(n-1)q}\setminus (X^qY^q,X^{nq})$. This clarifies $\fm ^{(n-1)q}H_{\frak m}^0(R/I^{[q]})\neq0$. In order to check the right hand side,  without loss of the generality, we look at $X^iY^j$ where $i+j=nq$. If $j< q$, then
 $i> (n-1)q$. Hence $(X^iY^j)X^q\in(X^qY^q,X^{nq})$.
 Thus we assume   $j\geq q$ and this implies $(X^iY^j)X^q\in(X^qY^q,X^{nq})$. So, $c(I)=b(I)=n$.

Now we prove  the particular case. As $\fm\in\Ass(R/I)$, $\depth(R/I)=2$. Keep in mind $\pd(R/I)<\infty$. By Auslander-Buchsbaum formula, $\pd(R/I)=2$. Its free resolution is the Taylor complex attached to $XY$ and $X^n$. So, the following complex is exact:$$\begin{CD}
0 @>>> R(-n-1) @>\binom{-Y}{X^{n-1}}>> R(-n)\bigoplus R(-2) @>(X^{n},XY)>> I  @>>> 0.
\\
\end{CD}$$We get the following exact sequence of locally free sheaves over $\PP^1$:$$\begin{CD}
0 @>>> \mathcal{O}(-n-1) @>>> \mathcal{O}(-n)\bigoplus \mathcal{O}(-2) @>>> \widetilde{I}  @>>> 0.
\\
\end{CD}$$Therefore, $\deg(\widetilde{I})=-1$. In fact, by Grothendieck's  theorem
\cite[Page 384, 2.6]{bHar}, one has $\widetilde{I}=\mathcal{O}(-1)$. By the notation as Remark \ref{holgercomp},
 \begin{enumerate}
\item[] $\alpha:=d_1+d_2+e=n+1$, and
\item[] $\beta:=\frac{(d-1)(d-2)-2}{d}+1=-1$,
\end{enumerate}we deduce $$0=\fm ^{\alpha q+\beta}H_{\frak m}^0(R/I^{[q]})=\fm ^{(n+1)q-1}H_{\frak m}^0(R/I^{[q]}).$$ From this we get $\fm ^{(n+1)q}H_{\frak m}^0(R/I^{[q]})=0$.
So,
the bound given by Remark \ref{holgercomp} may be strict.

ii) Set $\frak q:=(X)$. Then  $\{\frak q\}=\bigcap\min\left((I^{[q]})^{sat}\right).$ Note that $(A, \fm):=(R_{\frak q},\frak qR_{\frak q})$ is a regular local ring of dimension one. Therefore,$$\underset{n\to\infty}
\lim \frac{\ell\left(R_{\frak q}/(I^{[q]})^{sat}_{\frak q}\right)}{q^{\dim R_{\frak q}}}=\underset{n\to\infty}
\lim \frac{\ell\left(A/\frak m^{[p^n]}\right)}{p^n}=e_{HK}(A)=1.$$

iii) Define \begin{enumerate}
\item[]$A_1:= \{x^qy^j:1\leq j\leq q-1\}$,
\item[]$A_2:=\{x^{q+i}:1\leq i\leq (n-1)q-1\}$, and
\item[]$A_3:=\{x^{q+i}y^{j}:1\leq i\leq (n-1)q-1, 1\leq j\leq q-1\}$.
\end{enumerate}
Then $\cup A_i$
spans $\frac{(X^q)}{(X^qY^q,X^{nq})}$ as a vector space, and its cardinality is $$(q-1)+\left((n-1)q-1\right)+\left((n-1)q^2+2-(n+1)q\right)=(n-1)q^2.$$ Now $$f_{gHK}^{R/I}(n)=\ell\left(H_{\frak m}^0(R/I^{[q]})\right)=\dim_{\overline{\mathbb{F}}_p}\left(\frac{(X^q)}{(X^qY^q,X^{nq})}\right)=(n-1)q^2,$$ and the claim follows.
\end{proof}

\section{Concluding remarks over local rings}

We start by the following:
\begin{discussion}
Recall that a local ring $R$
is of finite Cohen-Macaulay type, i.e., if there are, up to isomorphism, only a finite number of indecomposable
maximal Cohen-Macaulay modules. By a celebrated result of Auslander, $R$ has isolated singularity, provided it is Cohen-Macaulay.
\end{discussion}

\begin{remark}
Let $(R,\fm)$ be a complete Cohen-Macaulay local ring
and of finite Cohen-Macaulay type. Then
$(LC)$ holds over $R$.
\end{remark}

The proof works in a more general setting: local rings of finite $F$-representation type.
\begin{proof}
Let $\underline{x}:=x_1,\ldots,x_d$ be a system of parameters.
Then  $\underline{x}$ is a regular sequence on $R$. This implies  $\underline{x}^q:=x_1^q,\ldots,x_d^q$  is a regular sequence.
Therefore, $\underline{x}$ is a regular sequence on $F^n(R)$. In particular, $F^n(R)$ is maximal Cohen-Macaulay module. As
$R$ is complete, the Krull-Remak-Schmidt holds for the category of finitely generated $R$-module.
As $R$ is $F$-finite, $F^n(R)=\bigoplus M_i(n)$ where $M_i(n)$ are finitely generated $R$-modules.
Note that $$\frac{M_i(n)}{IM_i(n)}\simeq M_i(n)\otimes R/I$$ has a right $R$-module structure coming from $R/I$, we denote this as $$R\times\left(M_i(n) /IM_i(n)\right)\longrightarrow M_i(n) /IM_i(n)$$
and remark that $H^0_{\fm}(\frac{M_i(n)}{IM_i(n)})$ is finitely generated and $\fm$-torsion with respect to this multiplicative structure. In particular, there is $a_{i,n}$ such that
$$ \fm^{a_{i,n}}\times H^0_{\fm}(\frac{M_i(n)}{IM_i(n)})=0.$$ Clearly, $M_i(n)$ are maximal Cohen-Macaulay.  As $R$ is of finite Cohen-Macaulay type, $\{M_i(n):i,n\}$ is a finite set. Let $a:=\max\{a_{i,n}\}<\infty$ and denote the minimal number of generators of $\fm^a$ by $\ell$.
One may find easily that:
 \begin{enumerate}
\item[1)]$ \fm^{a\ell q}\subseteq (\fm^a)^{[q]}$,
\item[2)]$ \fm^{a}\times H^0_{\fm}(F^n(R/I))=0$, and
\item[3)] $ \fm^{a}\times F^n(R/I)=(\fm^{a})^{[q]}\frac{R}{I^{[q]}}$.
\end{enumerate}
Now $H^0_{\fm}(F^n(R/I))=\bigoplus H^0_{\fm}(\frac{M_i(n)}{IM_i(n)})$. Set $b:=a\ell$. Its enough to apply these items to observe
$$\fm^{bq}H^0_{\fm}(R/I^{[q]})\subseteq(\fm^{a})^{[q]}H^0_{\fm}(R/I^{[q]})=\fm^{a}\times H^0_{\fm}(F^n(R/I))=0.$$
\end{proof}

Let us recall the following from \cite{abb}:

\begin{discussion}
i) For a pair $N \subset M $ of $R$-modules, look at the action of  Psekine-Szpiro on it, i.e., the map
$F^n(N)\to F^n(M)$ and we write $N^{[q]}_M:=\im (F^n(N)\to F^n(M))$. Then $N^{\ast}_{M}$ defined in a similar vein as above.
Set $G^n(M):=F^n(M)/0^{\ast}_{F^n(M)}$.
 One has
 $G^n(R/ \fa)=R/(\fa^{[q]})^{\ast}.$

ii) A  complex $(G_\bullet,\varphi_\bullet):0\to G_{\ell}\to\ldots\to G_0\to 0$ of finitely generated projective
modules is called stably phantom acyclic if
$\ker(F^e(\varphi_n)) \subset \left(\im(F^e
(\varphi_{n+1}))\right)^\ast_{
F^e(G^n)}$
for all $n $ and all $e$.
In this case we say $H_0(G_\bullet)$ is of finite
phantom projective dimension.

iii)
(Phantom acyclicity criterion): Let $R$ be a homomorphic image of a Cohen-Macaulay ring and $G_\bullet$ be a bounded complex of finitely generated projective modules of constant rank. Then $G_\bullet$ is stably phantom acyclic if and only if $G_\bullet\otimes \frac{R}{\rad(0)}$  satisfies in the standard conditions on rank and midheight.

iv) If $R$ is Cohen-Macaulay, then phantom projective dimension is the
same as projective dimension.
 \end{discussion}

 Combining \cite[Theorem 7.4]{abb} and \cite{Hun} yields:

\begin{remark}\label{abb}
Let $(R,\fm)$ be a local equidimensional ring satisfies in the phantom acyclicity criterion, e.g. homomorphic image of a Cohen-Macaulay ring, with an $\fm$-primary test ideal. Let $I$ be an ideal of finite phantom projective dimension. Then
$(LC)$ holds for $I$.
\end{remark}

\begin{proof}
 In view of
\cite[Theorem 7.4]{abb}, there is $e$ such that $$\fm^{e[q]}H^0_{\fm}(R/I^{[q]^\ast})=\fm^{e[q]}H^0_{\fm}(G^n(R/ I))=0.$$ In particular, there is $d$ such that $\fm^{dq}H^0_{\fm}(R/I^{[q]^\ast})=0$.
Denote the \textit{test ideal} by  $\tau$.
 Let $c$ be such that $\fm^c\subset \tau$. Thus, $$\fm^c\left(\frac{I^{[q]^\ast}}{I^{[q]}}\right)\subset \tau\left(\frac{I^{[q]^\ast}}{I^{[q]}}\right)=0.$$Look at the exact sequence$$0\longrightarrow H^0_{\fm}\left(\frac{I^{[q]^\ast}}{I^{[q]}}\right)\longrightarrow H^0_{\fm}\left(\frac{R}{I^{[q]}}\right)\longrightarrow H^0_{\fm}\left(\frac{R}{I^{[q]^\ast}}\right).$$Set $b:=c+d$. We deduce by this exact sequence that $\fm^{bq}H^0_{\fm}(R/I^{[q]})=0$ as claimed.
\end{proof}

\begin{corollary}
Let $R$ be a weakly  $F$-regular local ring and $I$ be an ideal of finite projective dimension. Then
$(LC)$ holds for $I$.
\end{corollary}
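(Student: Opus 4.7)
The plan is to verify that the hypotheses of Remark \ref{abb} all hold for the pair $(R,I)$ and then simply invoke that remark. The result will then follow with no further argument.

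First I would note two standard consequences of the tight closure definition. A weakly $F$-regular local ring is normal (so in particular a domain, hence equidimensional), and it is also Cohen--Macaulay; both facts are in the Hochster--Huneke foundational work on tight closure. This gives the equidimensionality required by Remark \ref{abb}, and because $R$ is itself Cohen--Macaulay it is trivially a homomorphic image of a Cohen--Macaulay ring, which puts us in the situation where the phantom acyclicity criterion applies.

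Next I would verify the hypothesis on the test ideal. By the very definition of weak $F$-regularity, $\fa^{\ast}=\fa$ for every ideal $\fa\lhd R$; in particular the unit ideal is tightly closed in the trivial sense, and more importantly every element of $R$ is a test element, so the test ideal $\tau$ equals $R$. Thus $\tau$ is $\fm$-primary (it even contains $1$), which is exactly what Remark \ref{abb} requires. It remains to upgrade the projective-dimension hypothesis on $I$ to a phantom-projective-dimension hypothesis; this is handled by item (iv) of the discussion preceding Remark \ref{abb}, which states that over a Cohen--Macaulay ring, phantom projective dimension agrees with ordinary projective dimension. Since $R$ is Cohen--Macaulay and $\pd_R(R/I)<\infty$, the ideal $I$ has finite phantom projective dimension.

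All the hypotheses of Remark \ref{abb} being in force, we conclude that there exists $b\in\mathbb{N}_0$, independent of $q$, such that $\fm^{bq}H^{0}_{\fm}(R/I^{[q]})=0$, which is the $(LC)$ condition for $I$. The only step that is not entirely formal is the appeal to the Cohen--Macaulayness of weakly $F$-regular rings; this is the main conceptual ingredient, but it is a well-documented theorem in tight-closure theory rather than something to be reproved here.
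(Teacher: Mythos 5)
Your proposal is correct and follows essentially the same route as the paper: verify the hypotheses of Remark \ref{abb} by noting that a weakly $F$-regular local ring is a Cohen--Macaulay domain with test ideal equal to all of $R$, and that over a Cohen--Macaulay ring phantom projective dimension coincides with projective dimension. The paper's own proof is exactly this reduction, so no further comparison is needed.
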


\begin{proof}
This is well-known that weakly  $F$-regular local rings are Cohen-Macaulay, integral domain and their test ideal is the whole ring.
Also, over Cohen-Macaulay rings phantom projective dimension is the
same as projective dimension. This finishes the proof via the above argument.
\end{proof}

In the regular case we find a bound for the $(LC)$ exponent:

\begin{remark}\label{ell}
i) Let $(R,\fm)$ be a $d$-dimensional regular local ring and $M$ a finite length module. Then $\fm^{qd\ell(M)}F^n(M)=0$.
Indeed, we do  induction by $\ell:=\ell(M)$. If $\ell=1$, then $M=R/ \fm$ and the claim  is clear in this case, because $\fm$ is generated by $d$ elements.
Now suppose, inductively, that $\ell > 1$ and the result has been proved for
modules of length less than $\ell$. Look at the exact sequence $$0\longrightarrow N\longrightarrow M\longrightarrow R/ \fm \longrightarrow 0,$$
where $\ell(N)=\ell-1$. By inductive hypothesis, $\fm^{qd(\ell-1)}F^n(N)=0$. By Fact \ref{kunz}, there is an exact sequence$$0\longrightarrow F^n(N)\longrightarrow F^n(M)\stackrel{f}\longrightarrow F^n(R/ \fm) \longrightarrow 0.$$Thus, $\fm^{dq}F^n(M)\subseteq \ker(f)\simeq F^n(N)$ and so $$\fm^{qd\ell}F^n(M)= \fm^{qd(\ell-1)}\fm^{qd}F^n(M)\subseteq\fm^{qd(\ell-1)}F^n(N)=0.$$

ii) Having the first item in mind and in order to find a sharp bound on $c(M)$, we deal with the case $M:=R/I$ is cyclic. By \cite[Example 2.7]{Mi}, $c^{I}(\fm)=\max\{r:\fm^r \nsubseteq I\}+d$. In view of Observation 8.1
one gets to a sharp bound.

iii) Let $(R,\fm)$ be a  regular local ring and $M$ an $R$-module. Then $F^n(H^i_{\fm}(M))\simeq H^i_{\fm}(F^n(M))$.
Indeed, due to Fact \ref{kunz}, $F^n(-)$ is flat and a flat functor  computes  with the local  cohomology modules.

iv) Let $(R,\fm)$ be a $d$-dimensional regular local ring and $M$ a finitely generated $R$-module. Then there is  some $b$ that does not depending  to $q$ such that $\fm^{bq}H^0_{\fm}(F^n(M))=0. $
Indeed, set $b:=\ell(H^0_{\fm}(M))d$. By the third item,  $F^n(H^0_{\fm}(M))\simeq H^0_{\fm}(F^n(M))$.   In view of the first item, the claim is clear.\end{remark}

\begin{remark}
Let $(R,\fm)$ be a regular local ring of prime characteristic and $I\vartriangleleft R$. In the same vein as Proposition \ref{pro} we have the following assertions: \begin{enumerate}
\item[i)] $f_{gHK}^{R/I}(n)=e_{gHK}(R/I)q^m$.
\item[ii)] $e_{gHK}(R/I)$ realizes as a length of a module.
In particular, $e_{gHK}(R/I)\in \mathbb{N}_0.$
\item[iii)]  $e_{gHK}(R/I)>0$ if and only if $\pd(R/I)= \dim R$.
\end{enumerate}
Let us present a new argument for i) and ii).
By Fact \ref{kunz}, $F^n(-)$ is flat. Any flat functor  computes the local  cohomology modules. Thus $F^n(H^0_{\fm}(M))\simeq H^0_{\fm}(F^n(M))$. In particular, $f_{gHK}^{R/I}(n)=p^{n\dim R}\ell(H^0_{\fm}(R/I))$.
\end{remark}

\begin{acknowledgement}
I thank  H. Brenner and A. Caminata  for sharing \cite{AB} with me and A. Vraciu for her comments and pointing out a mistake in the earlier draft.
\end{acknowledgement}

%%%%%%%%%%%%%%%%%%%%%%%%%%%%%%%%%%%%%%%%%%%%%%%%%%%

\end{document}